\documentclass[11pt]{amsart}
\usepackage{amssymb,latexsym}

\newcommand{\C}{\mathbb{C}}

\newcommand{\R}{\mathbb{R}}
\newcommand{\Z}{\mathbb{Z}}

\newcommand{\PP}{\mathbb{P}}

\def\vs{\vspace{2mm}}
\def\Hom{{\rm Hom\/}}

\def\p{\partial}

\def\be{\begin{equation}}
\def\ee{\end{equation}}

\newtheorem{theorem}{Theorem}[section]
\newtheorem{prop}[theorem]{Proposition}
\newtheorem{lemma}[theorem]{Lemma}
\newtheorem{remark}[theorem]{Remark}

\newtheorem{definition}[theorem]{Definition}
\newtheorem{cor}[theorem]{Corollary}
\newtheorem{example}[theorem]{Example}
\newtheorem{conj}[theorem]{Conjecture}
\DeclareMathOperator{\ep}{\epsilon}

\begin{document}

\title{Minimal genus for 4-manifolds with $b^+=1$}

\author{Bo Dai \& Chung-I Ho \& Tian-Jun Li}
\address{LMAM, School of Mathematical Sciences \\ Peking
University \\ Beijing 100871, P. R. China}
\email{daibo@math.pku.edu.cn}
\address{Mathematics Department\\  National Tsing Hua University\\ Hsinchu, Taiwan}
\email{ciho@math.cts.nthu.edu.tw}
\address{School  of Mathematics\\  University of Minnesota\\ Minneapolis, MN 55455}
\email{tjli@math.umn.edu}

\begin{abstract}
We derive an adjunction inequality for 
 any smooth, closed, connected, oriented 4-manifold $X$ with $b^+=1$.  
 This inequality  depends only on the cohomology algebra and 
generalizes the inequality of Strle in the case of $b_1=0$. We demonstrate that
the inequality is especially powerful when $2\tilde \chi+3\sigma\geq 0$, where $\tilde \chi$ is the modified Euler number taking account of the cup product on $H^1(X;\Z)$. 
\end{abstract}
\maketitle

\tableofcontents

\section{Introduction}

Let $X$ be a smooth, closed, connected, oriented 4-manifold. 
\begin{definition}
For any class $A\in H_2(X;\Z)$/Tor, let
 $$ mg_X(A)=\min \{ g(\Sigma)|\Sigma\ \text{is a connected, smoothly embedded surface with} \ [\Sigma]=A \}. $$
\end{definition}

The minimal genus function $mg_X$ is a basic invariant of $X$. For rich history on this function  see the excellent surveys of Lawson, \cite{Lawson}, \cite{Lawson-sphere}. 
For manifolds with $b^+(X)=1$, in this paper we will provide a general adjunction type  bound for  $mg_X$.  Unlike the well known adjunction  bounds in \cite{KM}, \cite{MST96}, \cite{OS}, 
our bound only depends on the cohomology algebra of $X$.

To state our bound we need to introduce the modified Euler characteristic $\tilde \chi$. 
Let  $\Lambda=\oplus _{i=0}^4\Lambda^i$ be a cohomology algebra of $b^+=1$ type.  We refer the readers to  Section~\ref{algebra} for precise definition.
Consider  the skew-symmetric bilinear form
\begin{equation}
T:\Lambda^1\times \Lambda^1\to  \Lambda^2.
\end{equation}
Let $\tilde b_1(\Lambda)$ be the rank of $T$. $\tilde b_1(\Lambda)$ is always an even number and the image of $T$ is at most one dimensional (see Subsection \ref{b+=1algebra}). 
Let
$\tilde \chi(\Lambda)=2+b_2(\Lambda)-2\tilde b_1(\Lambda)$, where $b_i(\Lambda)$ is the rank of $\Lambda^i$. Let $\sigma(\Lambda)$ be the signature of the symmetric bilinear form $\Gamma: \Lambda^2 \times \Lambda^2 \to \Lambda^4 \cong \Z$.

\begin{definition}

Let $\Lambda$ be a cohomology  algebra of $b^+=1$ type. 
A class $c\in \Lambda^2$ is called an adjunction class if  it is characteristic and either of the following conditions is satisfied, 

\begin{itemize}

\item[(I)]  $c\cdot c > \sigma(\Lambda)$,


\item[(II)]  $c \cdot c \geq 2\tilde \chi(\Lambda)+3\sigma(\Lambda)$ and $c$ pairs  non-trivially with  ${\rm Im}\, T$ when $T$ is non-trivial.

\end{itemize}

\end{definition}

We make a few remarks for the two types of adjunction classes. A class $c$ is characteristic if $c\cdot A \equiv A\cdot A \ (\text{mod}\ 2)$ for all $A\in \Lambda^2$. The conditions for adjunction classes come from  dimension formula and  wall crossing formulae in Seiberg-Witten theory for 4-manifolds with $b^+=1$ as explained in Section 3. 
If $T$ is trivial, there are only type I adjunction classes. A well-known result says that characteristic classes satisfy $c\cdot c \equiv \sigma(\Lambda)\ (\text{mod}\ 8)$. So $c\cdot c > \sigma(\Lambda)$ is equivalent to 
 \begin{equation} \label{typeI} c\cdot c \geq \sigma(\Lambda)+8.
 \end{equation} By direct computation, we also have 
\begin{equation}\label{typeII}
 2\tilde \chi(\Lambda)+3\sigma(\Lambda) =\sigma(\Lambda) +8 -4 \tilde b_1 (\Lambda).
 \end{equation}
  When $T$ is non-trivial,  $\tilde b_1 (\Lambda)$ is at least $ 2$.
 Thus it follows from  \eqref{typeI} and \eqref{typeII} that  
   it is  possible that  some type II adjunction classes  are not of type I, and a 
 type I adjunction class is also of type II if and only if it pairs non-trivially with Im$T$.

\begin{definition}
Let $A\in \Lambda^2$. 
For any  class $c$ of adjunction type, the $c$-genus of $A$ is defined as 

\begin{equation}\label{c-genus}
h_c(A)=\begin{cases}   1+ \frac{A\cdot A-|c\cdot A|}{2}   &\mbox{if } A \ne 0 \\
0 &\mbox{if  } A =0.
\end{cases}
\end{equation}
Let 
$h(A)=\max h_c(A)$, where the maximum is taken among all adjunction classes of $\Lambda$. 

\end{definition}

Notice that $h_c(A)$ is always an  integer since $c$ is characteristic. 
Notice also  that 
$h_c(A)=h_c(-A)$, hence $h(A)=h(-A)$.  
We remark that the minimal genus function $mg_X$ also has
this symmetry. 

More generally,  $h$ is invariant under the group of  automorphisms of $\Lambda^2$ that preserve the bilinear form $\Gamma$ and Im$T$.
This property will be used to  calculate  $h$  explicitly  when  $2\tilde \chi+3\sigma\geq 0$.

For $X$ a smooth, closed, connected, oriented 4-manifold with $b^+=1$, let $\Lambda(X)=H^*(X;\Z)$/Tor. Then $\Lambda(X)$ is a cohomology algebra of $b^+=1$ type. For any cohomology class $A\in H^2(X;\Z)$/Tor, let $mg_X(A) =mg_X(\text{PD}(A))$, where PD$(A)\in H_2(X;\Z)$/Tor is the Poincar\'e dual of $A$. For convenience, we will identify $H^2(X;\Z)$ with $H_2(X;\Z)$ by Poincar\'e duality whenever necessary. 
Our  bound of the minimal genus function is 

\begin{theorem}\label{main}
Let $X$ be a smooth, closed, connected, oriented 4-manifold with $b^+(X)=1$. 
Then for any $A\in \Lambda^2(X)$ with $A\cdot A\geq 0$, we have
$$mg_X(A) \geq  h(A).$$
\end{theorem}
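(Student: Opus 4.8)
The plan is to prove the adjunction inequality via Seiberg–Witten theory for $b^+=1$ manifolds, exploiting that both types of adjunction classes encode the hypotheses under which an embedded surface forces a constraint. I would begin by reducing to the case where $A$ is represented by a connected embedded surface $\Sigma$ of genus $g = mg_X(A)$, and we may assume $A \neq 0$ and that $g$ is small enough that the desired inequality could fail, arguing by contradiction. The strategy is to produce, for each adjunction class $c$, a $\mathrm{Spin}^c$ structure $\mathfrak{s}$ with $c_1(\mathfrak{s}) = c$ whose Seiberg–Witten invariant (or small-perturbation invariant in one of the two chambers) is nonzero, and then invoke the standard adjunction-type argument: a nonvanishing SW basic class $c$ together with an embedded surface $\Sigma$ of nonnegative self-intersection yields $2g(\Sigma)-2 \geq A\cdot A + |c\cdot A|$, which upon rearranging is exactly $g \geq h_c(A)$. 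Taking the maximum over all admissible $c$ gives $g \geq h(A)$.

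\medskip

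The heart of the matter is the nonvanishing of the appropriate Seiberg–Witten invariant, and this is where the two definitions of adjunction class enter. For a $b^+=1$ manifold the SW invariant depends on a chamber determined by the period point and the perturbation, and the invariants in the two chambers differ by the wall-crossing formula. I would therefore split into the two cases. For a type~(I) class, the condition $c\cdot c > \sigma$ is precisely the dimension condition ensuring the expected dimension of the moduli space is nonnegative, and combined with the $b_1$ data this is designed so that the wall-crossing term (or one chamber's invariant) is forced to be nonzero; one reads off from the wall-crossing formula, which involves the cup product structure on $H^1$ through the term detecting $\mathrm{Im}\,T$, that the relevant count is nonzero. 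For a type~(II) class the inequality $c\cdot c \geq 2\tilde\chi + 3\sigma$ together with the pairing condition against $\mathrm{Im}\,T$ is exactly the hypothesis making the wall-crossing number nonvanishing, using identity~\eqref{typeII} to see the dimension is in the correct range. In both cases the key algebraic input is that the wall-crossing formula depends only on the cohomology algebra $\Lambda(X)$, which is what allows the bound to be expressed purely in terms of $\Lambda$.

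\medskip

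Once nonvanishing of the invariant for the chosen chamber is established, the geometric adjunction step is standard. I would use the fact that when $A\cdot A \geq 0$ and $A$ is represented by a connected embedded surface of genus $g$, a nonzero SW invariant for $\mathfrak{s}$ with $c_1(\mathfrak{s})=c$ in the relevant chamber (chosen so that the surface lies in the correct side, which the hypothesis $A\cdot A \geq 0$ permits) yields the adjunction inequality $2g - 2 \geq A\cdot A + |c\cdot A|$. Here one must be careful that the chamber used for nonvanishing is compatible with the chamber in which the adjunction estimate holds; the condition $A\cdot A \geq 0$ is precisely what guarantees the period point can be positioned on the correct side of the wall relative to $\Sigma$. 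Rearranging gives $g \geq 1 + \tfrac{1}{2}(A\cdot A - |c\cdot A|) = h_c(A)$.

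\medskip

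The main obstacle I anticipate is establishing nonvanishing of the Seiberg–Witten invariant in the correct chamber uniformly from purely cohomological hypotheses, and doing so while controlling the chamber so that it is the one in which the adjunction inequality is valid. The subtlety is that the wall-crossing formula for $b_1 > 0$ involves the cup product $H^1 \times H^1 \to H^2$, and one must verify that the two clauses in the definition of adjunction class—the dimension-type bound in~(I) and the bound $2\tilde\chi+3\sigma$ together with the $\mathrm{Im}\,T$ pairing in~(II)—are exactly calibrated so that the wall-crossing count is nonzero in the appropriate range, which is why the factor $4\tilde b_1$ in~\eqref{typeII} and the evenness of $\tilde b_1$ are essential. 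Reconciling the chamber for nonvanishing with the chamber for the estimate, especially in the boundary case $A\cdot A = 0$, will require the most care.
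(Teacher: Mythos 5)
Your overall strategy (extract nonvanishing from the wall-crossing formulas, then apply an adjunction estimate) is the same as the paper's, but your central geometric step is stated incorrectly, and the error sits exactly at the point the whole theorem is engineered around. You claim that nonvanishing of the invariant for $c$ yields $2g(\Sigma)-2 \geq A\cdot A + |c\cdot A|$, and then "rearrange" this to $g \geq 1 + \tfrac{1}{2}(A\cdot A - |c\cdot A|)$. These two lines are not rearrangements of each other, and the first one is false: test it on $X=\C\PP^2$, where $c=3H$ is an adjunction class ($c\cdot c = 9 > 1 = \sigma$) and $A=H$ is represented by a sphere, so $2g-2=-2$ while $A\cdot A + |c\cdot A| = 4$. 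What is actually provable — and what the paper proves as Theorem \ref{refined} — is the inequality with a \emph{minus} sign, $2g(\Sigma)-2 \geq \Sigma\cdot\Sigma - |\langle c_1(\mathcal{L}),[\Sigma]\rangle|$, which is precisely why $h_c$ is defined with $-|c\cdot A|$. The obstruction to the stronger inequality is the very issue you flag as your "main obstacle" and never resolve: nonvanishing of ${\bf SW}_+ - {\bf SW}_-$ tells you that at least one chamber invariant is nonzero, but not \emph{which} one, so you cannot place the period point on a prescribed side of the wall relative to $\Sigma$.

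The concrete mechanism missing from your proposal is the paper's blow-up trick (Proposition \ref{bound}). Blow up at $n=\Sigma\cdot\Sigma$ points so the proper transform $\tilde\Sigma$ has square zero, and consider the two spin$^c$ structures with $c_1(\mathcal{L}_1)=c+\sum E_i$ and $c_1(\mathcal{L}_2)=c-\sum E_i$; then $\langle c_1(\mathcal{L}_1),[\tilde\Sigma]\rangle >0>\langle c_1(\mathcal{L}_2),[\tilde\Sigma]\rangle$, and by the blow-up formula the chamber invariants of the blow-up for either $\mathcal{L}_i$ equal ${\bf SW}_{\pm}(X,\mathcal{L})$. Hence \emph{whichever} of ${\bf SW}_{\pm}(X,\mathcal{L})$ is nonzero, one of the two pairs (${\bf SW}_+$ with $\mathcal{L}_1$, or ${\bf SW}_-$ with $\mathcal{L}_2$) feeds into the Kronheimer--Mrowka long-neck estimate, giving $2g-2 \geq |\langle c,[\Sigma]\rangle \pm \Sigma\cdot\Sigma|$; the worse of the two outcomes is exactly $\Sigma\cdot\Sigma - |\langle c,[\Sigma]\rangle|$, which is where the minus sign comes from. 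Two further gaps: the long-neck estimate requires $g(\Sigma)\geq 1$, so spheres need separate treatment (the paper attaches a handle to promote a putative sphere to a torus, and handles the boundary case $\Sigma\cdot\Sigma = |c\cdot\Sigma|$ via the Morgan--Szab\'o--Taubes vanishing lemma combined with the conjugation symmetry of ${\bf SW}_{\pm}$); and when $b_1>0$ the relevant invariants carry $H_1$-insertions, for which the type I and type II conditions guarantee $k(\mathcal{L})\geq b_1$ and $k(\mathcal{L})\geq b_1-\tilde b_1$ respectively — not merely $k(\mathcal{L})\geq 0$ as you state — which is what makes the inserted wall-crossing numbers (equal to $1$, resp.\ to a nonzero multiple of $(c\cdot F)^{\tilde b_1/2}$) defined and nonzero.
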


Notice that the bound of $mg_X$ in Theorem   \ref{main} does not involve any Seiberg-Witten invariant. 
This  inequality is motivated by the one given by Strle in \cite{S} for classes with positive square in 4-manifolds with $b^+=1$ and $b_1=0$, 
which is proved via 
$L^2$ moduli spaces of manifolds with cylindrical ends. 
Our proof uses the wall crossing formulae for Seiberg-Witten invariants.


When $2\tilde \chi(\Lambda)+3\sigma(\Lambda)\geq 0$,  $h$ is easier to calculate and
offers a sharp bound in the following sense.


\begin{theorem}\label{optimal} Let $\Lambda$ be a cohomology algebra of $b^+=1$ type, and  
suppose $2\tilde \chi(\Lambda)+3\sigma(\Lambda)\geq 0$.  Then $h(A)\geq 0$ for any $A\in \Lambda^2$ with 
$A\cdot A >0$, or $A\cdot A=0$ and $A$ is primitive.

Moreover,  there exists a smooth,  closed, connected, oriented 4-manifold $X_{\Lambda}$ 
and an isomorphism $H^*(X_{\Lambda};\Z)/Tor\cong \Lambda$ such that
$$h(A)=mg_{X_{\Lambda}}(A)$$
 whenever $h(A)\geq 0$.  
 $X_{\Lambda}$ can be chosen to be of the form $Y\# l (S^1\times S^3)$,  where $l=b_1(\Lambda)-\tilde b_1(\Lambda)$, and 
$Y$ is an appropriate symplectic manifold with Kodaira dimension $=-\infty$ or $0$ from the list below. 

 a) $\C\PP^2 $ connected sum with up to 9 $\overline{\C\PP^2}$, 

b) $S^2 \times S^2$,

c)  Enriques surface,

d)  $S^2$-bundle over  $T^2$,
 
\end{theorem}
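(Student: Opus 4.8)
The plan is to treat the two assertions separately. The inequality $h(A)\ge 0$ is purely a statement about the lattice $\Lambda$, whereas the equality $h(A)=mg_{X_\Lambda}(A)$ is geometric and will come from the adjunction equality for symplectic surfaces in the model $Y$. The guiding observation is that connect-summing with $S^1\times S^3$ leaves $\Lambda^2$, $\Gamma$, and $T$ untouched: since $H^2(S^1\times S^3)=0$ and a product of two degree-one classes squares to zero, the summands $l(S^1\times S^3)$ only raise $b_1$, keeping $\tilde b_1$, $\tilde\chi$, and $\sigma$ fixed. Hence $h$ and the set of adjunction classes depend only on the core piece $Y$, and it suffices to understand $Y$.

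For $h(A)\ge 0$ I would first exploit the hypothesis to reduce $\Lambda$ to a finite list. By \eqref{typeII}, $2\tilde\chi+3\sigma=\sigma+8-4\tilde b_1$, so the assumption forces $\sigma+8\ge 4\tilde b_1\ge 0$; with $\sigma=2-b_2$ this gives $1\le b_2\le 10$ and $\tilde b_1\le(\sigma+8)/4$. When $T=0$ the only lattices are the odd forms $\langle 1\rangle\oplus k\langle -1\rangle$ with $0\le k\le 9$ and the even forms $H$ and $H\oplus(-E_8)$; when $T\ne 0$, ${\rm Im}\,T$ is an isotropic line (as $(a\cup b)^2=0$ for $a,b\in\Lambda^1$), which together with $\tilde b_1\ge 2$ forces $b_2=2$, $\sigma=0$, leaving the two rank-two forms $H$ and $\langle1\rangle\oplus\langle-1\rangle$. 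Using the automorphism invariance of $h$ I would put $\Gamma$ in a standard basis and, in each case, exhibit an explicit characteristic adjunction class $c$ — the (anti)canonical class of the corresponding model, of type I with $c^2=\sigma+8$, or, when $T\ne0$, of type II with $c^2=\sigma+8-4\tilde b_1=0$ pairing non-trivially with ${\rm Im}\,T$. The bound $h(A)\ge 0$ then reduces to $|c\cdot A|\le A^2+2$ for $A^2>0$ and $|c\cdot A|\le 2$ for primitive $A$ with $A^2=0$; this follows from a reverse Cauchy--Schwarz (Lorentzian) estimate combined with the freedom to flip the signs of the coordinates of $c$, and it is exactly here that the threshold $c^2=\sigma+8$ and the bound $k\le 9$ enter.

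For the equality I would, given $\Lambda$, choose $Y$ from (a)--(d) matching $(\Gamma,{\rm Im}\,T)$: $\C\PP^2\# k\,\overline{\C\PP^2}$ for the odd forms with $T=0$, $S^2\times S^2$ and the Enriques surface for $H$ and $H\oplus(-E_8)$, and the trivial (resp.\ twisted) $S^2$-bundle over $T^2$ for $H$ (resp.\ $\langle1\rangle\oplus\langle-1\rangle$) when $T\ne0$. Setting $X_\Lambda=Y\# l(S^1\times S^3)$ with $l=b_1(\Lambda)-\tilde b_1(\Lambda)$, the cohomology-algebra isomorphism $H^*(X_\Lambda;\Z)/\mathrm{Tor}\cong\Lambda$ is a routine Mayer--Vietoris/K\"unneth check, matching $b_1$, $\Gamma$, and $T$. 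Theorem \ref{main} already gives $mg_{X_\Lambda}\ge h$, so the work is the reverse inequality. For $A$ with $h(A)\ge 0$ I would, after replacing $A$ by $-A$ if necessary so that $K_Y\cdot A\le 0$, represent $A$ by a smooth connected symplectic or algebraic curve in $Y$; by the adjunction equality its genus is $1+\frac{1}{2}(A^2+K_Y\cdot A)=1+\frac{1}{2}(A^2-|K_Y\cdot A|)=h_{K_Y}(A)\le h(A)$. Pushing the curve off the connect-sum region keeps it embedded in $X_\Lambda$, so $mg_{X_\Lambda}(A)\le h_{K_Y}(A)\le h(A)$; since also $mg_{X_\Lambda}(A)\ge h(A)$, all three agree. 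The case $A^2=0$ primitive is realized by a fiber sphere or torus with $h(A)=0$.

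The main obstacle is precisely this representation step: producing, in each model, an embedded connected surface of the adjunction genus for every class with $h\ge 0$, with the correct sign. For the rational and ruled surfaces this rests on the well-understood structure of their effective and symplectic cones and on symplectic Thom-type results; for the Enriques surface it uses $K_Y\equiv 0$ mod torsion, so that $h(A)=1+\frac{1}{2}A^2$ is realized by a holomorphic curve. The delicate points are arranging the sign so that adjunction produces $-|K_Y\cdot A|$ rather than $+|K_Y\cdot A|$, handling the extremal classes where $h(A)=0$, and verifying that the cup-product form $T$ (not merely $\Gamma$) is reproduced by $Y$ in the $S^2$-bundle case. Note that we never need to check directly that $\pm K_Y$ attains the maximum defining $h$: the squeeze $h_{K_Y}(A)\le h(A)\le mg_{X_\Lambda}(A)\le h_{K_Y}(A)$ forces $h=h_{\pm K_Y}=mg_{X_\Lambda}$ once the surface of genus $h_{K_Y}(A)$ is constructed.
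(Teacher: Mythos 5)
Your overall architecture matches the paper's: the same reduction via $S^1\times S^3$ summands, the same five-fold case division by $(\Gamma,\mathrm{Im}\,T)$, the same choice of model $Y$, and the same squeeze $h\le mg_{X_\Lambda}\le mg_Y\le h$ with Theorem \ref{main} supplying the first inequality. The differences lie exactly at the two places where the real work happens, and at both places your sketch has a genuine gap. First, the lattice inequality $h(A)\ge 0$. Your recipe is to fix the anticanonical class and flip signs of its coordinates, then invoke a ``reverse Cauchy--Schwarz'' estimate. But the Lorentzian reverse Cauchy--Schwarz inequality gives a \emph{lower} bound $c\cdot A\ge\sqrt{c^2}\sqrt{A^2}$ for classes in the forward cone, while you need the \emph{upper} bound $|c\cdot A|\le A^2+2$; and sign flips alone do not suffice. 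Concretely, in $\langle 1\rangle\oplus 2\langle -1\rangle$ take $A=4H-3E_1-2E_2$, which has $A^2=3>0$: every class $c=3H\pm E_1\pm E_2$ satisfies $|c\cdot A|\ge 7>A^2+2=5$, so your recipe fails for this $A$. The inequality $|c_0\cdot A|\le A^2+2$ is true only after $A$ is moved to reduced form, and for reduced classes (where $a\ge b_1+b_2+b_3$) it is precisely the constrained optimization carried out in the paper's Appendix (Lemmas \ref{case4} and \ref{leq0case4}); the reducedness constraint is essential and no soft convexity argument is known to replace that computation.

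Second, the representation step and a circularity in your squeeze. You assert that the squeeze means ``we never need to check directly that $\pm K_Y$ attains the maximum defining $h$,'' but a surface of genus $h_{K_Y}(A)$ can only exist when $h_{K_Y}(A)\ge 0$, whereas the theorem concerns all $A$ with $h(A)\ge 0$, and a priori $h(A)\ge 0>h_{K_Y}(A)$ is possible since $h\ge h_{K_Y}$. Ruling this out on reduced classes is exactly the content of Proposition \ref{h for reduced} and Lemma \ref{case4} (that $c_0=-K_Y$ realizes the maximum), which you claim to avoid; without it, your construction simply does not apply to the classes potentially left over. Moreover, to move an arbitrary class to a reduced one while preserving $mg$ (not just $h$), the reducing automorphism must be induced by a diffeomorphism; this is the fact $D(X)=Aut(\Gamma)^T$ (Wall for rational surfaces, Friedman--Morgan/L\"onne for the Enriques surface), recorded in the paper's Lemma \ref{properties}, which your sketch never invokes. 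Finally, the construction itself --- that every reduced class of nonnegative square with $h_{K_Y}\ge 0$ is represented by a smooth connected algebraic or symplectic curve of adjunction genus --- is precisely the content of the minimal-genus computations the paper cites (Li--Li, B.-H.~Li, Ruberman, and symplectic Seiberg--Witten theory for the Enriques surface); you correctly identify it as ``the main obstacle'' but leave it unproven, so your proposal ultimately defers to the same hard input the paper cites while introducing the gaps above.
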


As a consequence of Theorems \ref{main} and \ref{optimal},  when restricted to 
classes with non-negative square, $X_{\Lambda}$ has the smallest minimal genus function among 4-manifolds with the same cohomology algebra (modulo torsion).

In the last section, we apply our bound and give constraints on classes with non-negative square 
and which are represented by spheres. 
In a future paper we will make some explicit calculations of $h$ when   $2\tilde \chi(\Lambda)+3\sigma(\Lambda)<0$ and discuss further properties of the function $h$. In particular, we hope to  apply these calculations,
together with
Theorem \ref{main}, to completely  constrain classes represented by spheres.

 We end the introduction with a number of remarks.

$h(A)$ only depends on the cohomology algebra of $X$.  In this sense, it is similar to the Rokhlin (\cite{R}) and Hsiang-Szczarba (\cite{HS}) bounds via branched covering and Atiyah-Bott $G$-signature
 theorem. When $b^+(X)=1$, the bound given by $h$ is generally stronger. 

When $\Gamma$ is indefinite and $b^+\geq 2$,  there also exists adjunction type inequality which
depends only on the cohomology algebra for a few manifolds of small even  intersection forms  on the  $11/8$ line
 $l(-2E \oplus  3U)$, $l=1, 2, 3$ (\cite{MS},   \cite{FK}). In these cases, $c=0$ is the only adjunction class. 
 Beyond the $11/8$ line, no such adjunction inequality could exist, since Wall showed in \cite{Wall} that 
there exist manifolds such that every primitive ordinary class is represented by spheres.
This is also true for any odd and strongly indefinite intersection form: $m\langle 1\rangle \oplus n\langle -1\rangle$ with $m, n\geq 2$.




Similar to the $b^+>1$ Seiberg-Witten adjunction inequalities in  \cite{MST96}, \cite{OS}, the bounds $h_c$ involve the absolute value
$|c\cdot A|$.
The absolute value  cannot be removed by checking the case of $\C\PP^2$.



 \vs \noindent {\it Acknowledgment}. The research for the first named
author is partially supported by NSFC Grants 11371211 and 11431001. The research
for the second named author is partially supported by MST
postdoctoral fellowship. The research for the third named author
is partially supported by NSF. The authors would like to thank the referees for carefully reading the paper and for helpful comments and suggestions.

\section{Cohomology algebras of $b^+=1$ type and properties of $h$}
\subsection{Cohomology algebras and quadratic forms}\label{algebra}

\begin{definition}
Let  $\Lambda=\oplus _{i=0}^4\Lambda^i$ be a finitely generated, graded, commutative, associative algebra over $\Lambda_0=\Z$
with each summand $\Lambda^i$  a  free abelian group,  and a group isomorphism $p: \Lambda^4\cong  \Z$.
$(\Lambda, p)$, or simply $\Lambda$,  is called a cohomology algebra if,  
with respect to $p$, the products $\Lambda^i\times  \Lambda^{4-i}\to  \Lambda^4\cong \Z$ are duality pairings, in the sense that 
$$\Lambda^i\to  \Hom_{\Z} (\Lambda^{4-i}, \Z)$$
are isomorphisms of groups.

Denote the rank of $\Lambda^i$ by $b_i(\Lambda)$.
Let $\tilde b_1(\Lambda)$ be the rank of the skew-symmetric pairing $T:\Lambda^1\times \Lambda^1\to  \Lambda^2$.
Let $\chi(\Lambda)=\sum (-1)^i b_i(\Lambda)$  be the Euler number, and  $\tilde\chi(\Lambda)=2+b_2(\Lambda) -2\tilde b_1(\Lambda)$  the  modified Euler number.
Denote the signature type  of   $\Gamma:\Lambda^2 \times \Lambda^2\to  \Lambda^4 \cong \Z$ 
by $(b^+(\Lambda),  b^-(\Lambda))$, and let  $\sigma(\Lambda)=b^+(\Lambda)-b^-(\Lambda)$ denote the signature. 
$\Lambda$ is called a cohomology algebra of $b^+=k$ type if  $b^+(\Lambda)=k$. 

\end{definition}

\begin{example}  The following cohomology algebra of $b^+=0$ type
$\Lambda_S$ is modeled on $S^1\times S^3$:  $\Lambda_S^2=0$, $\Lambda_S^1=\Lambda_S^3=\Z$ with trivial $T$ pairing. 
\end{example}

Given two cohomology algebras $(\Lambda_a,p_a)$ and $(\Lambda_b,p_b)$, their direct sum $\Lambda_a\oplus \Lambda_b$ is  defined as the  following cohomology algebra $(\Lambda, p)$
 with
$\Lambda^{i}=\Lambda^i_a\oplus \Lambda^i_b$ for $1\leq i\leq 3$,  and $\Lambda^0 =\Z$.
To describe $\Lambda^4$ and $p:\Lambda^4\to \Z$, consider   the group homomorphism
 $$cs:   \Lambda^4_a\oplus \Lambda^4_b  \to  \Z\oplus \Z\to \Z$$
 sending $x=x_a +x_b\in \Lambda^4_a\oplus \Lambda^4_b$ to $p_a(x_a)+ p_b(x_b)$. 
Then   $\Lambda^4$ is the quotient of $\Lambda^4_a\oplus \Lambda^4_b$ by the kernel of  $cs$, and $p$ is the corresponding quotient homomorphism. 
   

With the product structure defined in the obvious way, it is not hard to verify that $\Lambda$ is a cohomology algebra.
Notice that the $b^+$  type is additive with respect to this operation.  

A simple but useful fact  is that   the sum  $\Lambda\oplus l\Lambda_S$ and $\Lambda$ have the same $b^+,\tilde b_1,\tilde \chi, \Lambda^2$ and $\Gamma$ for any non-negative integer $l$.

\subsubsection{Special features of $b^+=1$ type}  \label{b+=1algebra}
Suppose that $\Lambda$ is a cohomology algebra of $b^+=1$ type. We prove here the special  features  mentioned
in the introduction.

\begin{itemize}

\item The image of $T$ is either $0-$ or $1-$ dimensional.  We explain this conclusion by contradiction, following the proof of Lemma 2.4 in  \cite{LiLiu1}.  Suppose $\omega_1 =y_1 y_2 \in \text{Im}\, T$, and $\omega_2 =y_3 y_4 \in \text{Im}\, T$, where $y_1,\dots, y_4 \in \Lambda^1$, and $\omega_1,\ \omega_2$ are linearly independent in $\Lambda^2$. It is easy to see that $\omega_1^2 =0=\omega_2^2$. Since $b^+=1$, and 
$\omega_1,\ \omega_2$ are linearly independent, by the light cone lemma in \cite{LiLiu2}, $\omega_1 \omega_2 \not= 0 \in \Lambda^4$. This implies that $y_1,\dots, y_4 \in \Lambda^1$ are linearly independent and generate an exterior algebra.  Moreover, $y_iy_j$, $1\leq i<j\leq 4$, are in Im$T$, and are linearly independent in $\Lambda^2$. The restriction of the bilinear form $\Gamma$ to the linear span of $\{ y_iy_j| 1\leq i<j\leq 4 \}$  has vanishing signature. This implies $b^+(\Lambda) \geq 3$, a contradiction.  

\item Since $T$ is skew symmetric, $\tilde b_1 (\Lambda)$ is always an even number. 

\end{itemize}

\begin{definition} A cohomology algebra $\Lambda$ is called Lefschetz if $T$ is non-degenerate, i.e. $\tilde b_1(\Lambda)=b_1(\Lambda)$. A cohomology subalgebra  $\Lambda'$ is called a Lefschetz reduction of $\Lambda$ if
$\Lambda'$ is Lefschetz and $\Lambda\cong \Lambda'\oplus l\Lambda_S$ for some $l$.

\end{definition}

By Lemma \ref{Lef} below, 
Lefschetz reductions always exist, and are unique up to isomorphism when $\Lambda$ is of  $b^+=1$ type. We will denote any Lefschetz reduction of $\Lambda$ by $\Lambda_{red}$. 

\begin{lemma} \label{Lef}
Any cohomology algebra $\Lambda$ of $b^+=1$ type has Lefschetz reductions, and all Lefschetz reductions are isomorphic as cohomology algebras. 
\end{lemma}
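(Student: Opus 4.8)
The plan is to realize a Lefschetz reduction by splitting off the radical of $T$ from $\Lambda^1$, together with its Poincaré dual inside $\Lambda^3$, and then to obtain uniqueness by showing that this radical, and the data induced on the complementary summands, is intrinsic to $\Lambda$. The pivotal structural fact I would establish first is that the radical $R=\{y\in\Lambda^1: T(y,z)=0\ \forall z\in\Lambda^1\}$ annihilates $\Lambda^2$ under the product $\Lambda^1\times\Lambda^2\to\Lambda^3$. Indeed, for $y\in R$ and $w\in\Lambda^2$, associativity gives $z\cdot(y\cdot w)=(z\cdot y)\cdot w=\pm\Gamma(T(y,z),w)=0$ for every $z\in\Lambda^1$; since the duality pairing $\Lambda^1\times\Lambda^3\to\Lambda^4\cong\Z$ is perfect, this forces $y\cdot w=0$. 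I would also record that $R$ is primitive in $\Lambda^1$ (as $\Lambda^2$ is torsion-free), hence a direct summand of rank $l:=b_1(\Lambda)-\tilde b_1(\Lambda)$.

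For existence I would pick a basis $a_1,\dots,a_l$ of $R$, extend it by $v_1,\dots,v_m$ (with $m=\tilde b_1(\Lambda)$) to a basis of $\Lambda^1$, and take $a_i^*,v_j^*$ to be the dual basis of $\Lambda^3$ under the perfect pairing. Define $\Lambda'$ by $(\Lambda')^1=\langle v_j\rangle$, $(\Lambda')^2=\Lambda^2$, $(\Lambda')^3=\langle v_j^*\rangle=\mathrm{Ann}(R)$, and $(\Lambda')^4=\Lambda^4$. The remaining verifications are bookkeeping: because $R$ annihilates both $\Lambda^1$ (under $T$) and $\Lambda^2$, every product block-diagonalizes with respect to the splittings $\langle v_j\rangle\oplus R$ and $\langle v_j^*\rangle\oplus\langle a_i^*\rangle$; the dual-basis relations show $(\Lambda')^1\times(\Lambda')^3\to\Lambda^4$ is perfect, so $\Lambda'$ is a cohomology subalgebra. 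Moreover $T|_{(\Lambda')^1}$ has full rank since its radical is $(\Lambda')^1\cap R=0$, so $\Lambda'$ is Lefschetz, while the subalgebra generated by $R$ and $\langle a_i^*\rangle$ is visibly isomorphic to $l\Lambda_S$. This yields $\Lambda\cong\Lambda'\oplus l\Lambda_S$.

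For uniqueness the key observation is that the reduction is determined canonically. In any decomposition $\Lambda\cong\Lambda''\oplus l''\Lambda_S$ with $\Lambda''$ Lefschetz, the radical of $T$ on all of $\Lambda^1$ equals $(l''\Lambda_S)^1$, because the Lefschetz factor contributes no radical; hence any algebra isomorphism must carry $R$ onto $(l''\Lambda_S)^1$, and comparing $b_1$ and $\tilde b_1$ forces $l''=b_1(\Lambda)-\tilde b_1(\Lambda)=l$. Consequently every reduction is isomorphic to the algebra assembled from the intrinsic data $\Lambda^1/R$ (with the descended nondegenerate form $\bar T$), $\Lambda^2$ (with $\Gamma$), $\mathrm{Ann}(R)\subseteq\Lambda^3$, and $\Lambda^4$, where the products are induced from $\Lambda$ — well defined precisely because $R$ lies in the radical of $T$ and annihilates $\Lambda^2$. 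Thus all Lefschetz reductions are isomorphic.

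The main obstacle I expect is the first structural fact together with checking that the full graded product block-diagonalizes: one must invoke the perfectness of both Poincaré pairings and associativity with care, and keep track of the dual bases so that $(\Lambda')^3$ comes out to be exactly $\mathrm{Ann}(R)$ rather than a proper finite-index subgroup. Once this is in hand, both the construction and the canonical characterization needed for uniqueness follow formally.
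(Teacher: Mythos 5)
Your proof is correct and follows essentially the same route as the paper's: the same key fact that the radical $R=\ker T$ annihilates $\Lambda^2$ (proved by the same associativity-plus-duality argument), the same splitting $\Lambda^1=\langle v_j\rangle\oplus R$ and $\Lambda^3=\mathrm{Ann}(R)\oplus\langle a_i^*\rangle$ for existence, and uniqueness via the intrinsic data $(\Lambda^1/R,\ \Lambda^2,\ \mathrm{Ann}(R),\ \Lambda^4)$ with induced products. The only cosmetic difference is that you establish uniqueness by transporting every reduction onto this canonical model through the defining isomorphism, whereas the paper compares two subalgebra reductions directly (which is why it needs the auxiliary rank count using the classes $x_iB$); the underlying content is identical.
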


\begin{proof}
If $T$ is trivial, then $\Lambda'=\Lambda^0 \oplus \Lambda^2 \oplus \Lambda^4$ is the unique Lefschetz reduction of $\Lambda$. Suppose $T$ is non-trivial. 
Let $\Lambda_0^1=\ker T=\{x\in \Lambda^1|xy=0, \forall y\in \Lambda^1\}$. 
It is clear that rank$(\Lambda_0^1)=b_1-\tilde{b}_1$.
Choose a complementary subspace $\tilde\Lambda^1$ such that $\Lambda^1=\tilde\Lambda^1\oplus\Lambda^1_0$. 
$\Lambda^3$ is also decomposed as $\Lambda^3=\tilde\Lambda^3\oplus\Lambda^3_0$ where $\tilde\Lambda^3=(\Lambda^1_0)^\perp$, $\Lambda^3_0=(\tilde\Lambda^1)^\perp$.
Note that $\tilde\Lambda^3$ is independent of choice of $\tilde\Lambda^1$ and $\tilde\Lambda^1\times \tilde\Lambda^3\rightarrow \Z$, $\Lambda^1_0\times \Lambda^3_0\rightarrow \Z$ are duality pairings.

Let $\Lambda' =\Lambda^0 \oplus \tilde \Lambda^1 \oplus \Lambda^2 \oplus \tilde\Lambda^3 \oplus \Lambda^4$ and $\Lambda'' =\Lambda^0 \oplus \Lambda^1_0 \oplus 0 \oplus \Lambda^3_0 \oplus \Lambda^4$. We claim that $\Lambda'$ is a Lefschetz reduction of $\Lambda$ and $\Lambda\cong\Lambda' \oplus \Lambda''\cong \Lambda' \oplus  (b_1-\tilde b_1)\Lambda_S$.
It is enough to verify that Im$(\Lambda^1_0\times\Lambda^2)=0$ and Im$(\tilde\Lambda^1\times\Lambda^2)\subset \tilde \Lambda^3$.

If Im$(\Lambda^1_0\times\Lambda^2)\neq 0$, there exist $x\in\Lambda^1_0, z\in\Lambda^2$ such that $xz\neq 0\in\Lambda^3$.
Then $xzy\neq 0$ for some $y\in \Lambda^1$.
This implies $xy\neq 0, x\notin \Lambda^1_0$, a contradiction.

For any  $x\in\tilde \Lambda^1, z\in\Lambda^2, y\in\Lambda^1_0$, we have $xzy=xyz=0$. So $xz\in (\Lambda^1_0)^\perp=\tilde \Lambda^3$. 
Hence Im$(\tilde\Lambda^1\times\Lambda^2)\subset \tilde \Lambda^3$. In fact,  Im$(\tilde\Lambda^1\times\Lambda^2)$ contains $\tilde b_1=\text{rank}(\tilde \Lambda^3)$ linearly independent vectors in $\tilde \Lambda^3$.  Suppose $\{ x_1,x_2,\dots, x_{\tilde b_1}\}$ is an integral basis for $\tilde \Lambda^1$, $F$ is a generator for Im$T$, and $B\in \Lambda^2$ such that $F\cdot B=1$. Such a $B$ exists since $\Gamma$ is unimodular. Then one can verify that $x_i B$, $i=1,2,\dots, \tilde b_1$, are linearly independent in $\tilde \Lambda^3$. 

Suppose $M'$ is another Lefschetz reduction of $\Lambda$, and $\Lambda \cong M' \oplus  M'' \cong M' \oplus  (b_1-\tilde b_1)\Lambda_S$. Suppose $M' =\Lambda^0 \oplus \tilde M^1 \oplus \Lambda^2 \oplus \tilde M^3 \oplus \Lambda^4$ and $ M'' =\Lambda^0 \oplus M^1_0 \oplus 0 \oplus  M^3_0 \oplus \Lambda^4$. It is easy to see that $ M^1_0 =\ker T=\Lambda^1_0$, and $\tilde M^1$ is another complement of $\ker T$ in $\Lambda^1$.  We also have $\tilde M^3 \supset \text{Im}(\tilde M^1\times\Lambda^2) =\text{Im}(\Lambda^1\times\Lambda^2)$.
The argument in the previous paragraph implies that $\tilde M^3 =(\ker T)^\perp =\tilde \Lambda^3$.  

Notice that $T$ reduces to a non-degenerate skew-symmetric bilinear form on $\Lambda^1/\ker T$, and $\tilde \Lambda^1$ and $\tilde M^1$ are both isomorphic to $\Lambda^1/\ker T$. Now we see that $\Lambda'\cong M'$ as cohomology algebras. 
 \end{proof}

\subsubsection{Unimodular quadratic forms}
Notice that  the symmetric bilinear form $\Gamma$
is unimodular. 
We will often abbreviate $\Gamma(x, y)$ as $x\cdot y$.
It induces a unimodular quadratic form
$Q:\Lambda^2\rightarrow \mathbb{Z}$ as $Q(x)=\Gamma(x, x)$. $Q(x)$
is called the  norm of $x$. $\Gamma$ is of even type if $Q(x)$ is
even for any vector $x \in \Lambda^2$. Otherwise, $\Gamma$ is called of
odd type. 
$\Gamma$
is called {\it definite\/}, or {\it
indefinite\/} if min$\{b^+,b^-\}=0$ or $\geq 1$ respectively.

The following classification is well known (eg. \cite{Wall-uni}, Theorem 5):
indefinite unimodular symmetric forms are classified  by their rank, signature and type.
Let $U=\begin{pmatrix} 0 & 1 \\ 1 & 0 \end{pmatrix}$ and $E$ be the hyperbolic lattice and the (positive definite) $E_8$
lattice respectively. The list of indefinite unimodular symmetric forms is
$$m\langle 1 \rangle \oplus n\langle -1\rangle, \quad pU\oplus qE, \
 \text{where} \ m,n,p\in\mathbb{N},q\in \mathbb{Z}.$$
Therefore,  when $b^+=1$, the list of unimodular symmetric forms is 
  $$\langle 1 \rangle \oplus n\langle -1\rangle,  \quad  U\oplus  q(-E),  \
 \text{where} \ n,  q\in \mathbb N \cup \{0\}.$$



\subsection{The adjunction classes, $Aut(\Gamma)^T$ and $h$}

Let $\Lambda$ be a cohomology algebra of $b^+=1$ type. 

Let $\mathcal C_{\Lambda}$ be the set of  adjunction classes of $\Lambda$. We discuss 
properties of this set and their consequences for the function $h$. 

Let $Aut(\Gamma)$ be the group of automorphisms of $\Lambda^2$ that preserve the bilinear form $\Gamma$. We shall concern with a subgroup of  $Aut(\Gamma)$ which preserves the set of adjunction classes. 

\begin{definition}Let $Aut(\Gamma)^T$ consist of elements $\phi\in Aut(\Gamma)$ such that $\phi(\text{Im}T)\subseteq  \text{Im}T$ (Actually, $\phi(\text{Im}T)=\text{Im}T$).  
\end{definition}

\begin{lemma}
$\mathcal C_{\Lambda}$ is preserved by $Aut(\Gamma)^T$.  Consequently, $h$ is invariant under $Aut(\Gamma)^T$.
\end{lemma}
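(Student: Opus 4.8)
The plan is to first establish that $Aut(\Gamma)^T$ is a group closed under inverses, then verify directly that each of the three defining conditions of an adjunction class is preserved by any $\phi\in Aut(\Gamma)^T$, and finally deduce the invariance of $h$ by a change-of-variables argument in the maximum defining $h$.

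First I would record that $Aut(\Gamma)^T$ is genuinely a subgroup of $Aut(\Gamma)$: composition clearly preserves $\text{Im}\,T$, and since each $\phi$ satisfies the equality $\phi(\text{Im}\,T)=\text{Im}\,T$ noted in the definition, the inverse $\phi^{-1}$ also preserves $\text{Im}\,T$, so $\phi^{-1}\in Aut(\Gamma)^T$. This closure under inverses will be used crucially in the final step.

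Next I would check that $\phi(c)$ is an adjunction class whenever $c$ is. For the characteristic property, using that $\phi$ preserves $\Gamma$ and is an automorphism of $\Lambda^2$, one has $\phi(c)\cdot A = c\cdot\phi^{-1}(A)\equiv \phi^{-1}(A)\cdot\phi^{-1}(A) = A\cdot A \ (\text{mod}\ 2)$ for every $A\in\Lambda^2$, so $\phi(c)$ is again characteristic. Since $\phi$ preserves $\Gamma$ we have $\phi(c)\cdot\phi(c)=c\cdot c$, and the invariants $\sigma(\Lambda)$ and $2\tilde\chi(\Lambda)+3\sigma(\Lambda)$ depend only on $\Lambda$; hence condition (I) and the inequality in condition (II) transfer verbatim. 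For the remaining clause of (II), if $c\cdot v\neq 0$ for some $v\in\text{Im}\,T$, then $\phi(c)\cdot\phi(v)=c\cdot v\neq 0$ with $\phi(v)\in\text{Im}\,T$, so $\phi(c)$ still pairs non-trivially with $\text{Im}\,T$. This is the one place where the defining property $\phi(\text{Im}\,T)=\text{Im}\,T$ of $Aut(\Gamma)^T$, rather than mere membership in $Aut(\Gamma)$, is essential.

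Finally, I would deduce the invariance of $h$. For $A\neq 0$ and any adjunction class $c$, the identities $\phi(A)\cdot\phi(A)=A\cdot A$ and $c\cdot\phi(A)=\phi^{-1}(c)\cdot A$ give $h_c(\phi(A))=h_{\phi^{-1}(c)}(A)$, while the case $A=0$ is immediate since $\phi(0)=0$. Since $\phi^{-1}\in Aut(\Gamma)^T$ carries $\mathcal C_{\Lambda}$ bijectively onto itself, as $c$ ranges over $\mathcal C_{\Lambda}$ so does $\phi^{-1}(c)$; taking the maximum over all adjunction classes therefore yields $h(\phi(A))=h(A)$. The main point to be careful about is not any hard estimate but the bookkeeping in this last step: the invariance of $h$ relies on the fact, established in the first two steps, that $Aut(\Gamma)^T$ permutes the adjunction classes, which in turn rests on the equality $\phi(\text{Im}\,T)=\text{Im}\,T$.
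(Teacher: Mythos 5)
Your proof is correct and takes essentially the same route as the paper's: first show that $Aut(\Gamma)^T$ carries $\mathcal C_{\Lambda}$ into itself (type I conditions and the inequality in type II being $Aut(\Gamma)$-invariant, the pairing with ${\rm Im}\,T$ being exactly what $Aut(\Gamma)^T$ is designed to preserve), then deduce $h(\phi(A))=h(A)$ by a change of variables in the maximum. You simply make explicit some bookkeeping the paper leaves implicit, namely closure of $Aut(\Gamma)^T$ under inverses and the verification that $\phi(c)$ is again characteristic.
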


\begin{proof}
We first show that $\mathcal C_{\Lambda}$ is preserved by $Aut(\Gamma)^T$. This is clear for type I adjunction classes, which are preserved by the larger group $Aut(\Gamma)$. For type II adjunction classes, the first condition is also preserved by $Aut(\Gamma)$ and the second condition is why we introduce $Aut(\Gamma)^T$.

We now establish the invariance of 
$h$    under   the action of $Aut(\Gamma)^T$.      Let $A\in \Lambda^2$ and $\alpha\in Aut(\Gamma)^T$. For any $c\in \mathcal C_{\Lambda}$, 
$h_c(A)=h_{\alpha(c)}(\alpha(A))$. Since $\alpha(c)$ is also in $\mathcal C_{\Lambda}$, we
have $h_c(A)\leq h(\alpha(A))$ for any $c\in \mathcal C_{\Lambda}$ and hence $h(A)\leq h(\alpha(A))$. Similarly, we also have $h(\alpha(A))\leq h(A)$.
Hence we have the desired equality. 
\end{proof}

By this invariance property, to determine the function $h$, it suffices to  pick an element, called reduced element,  in each orbit of $Aut(\Gamma)^T$ and calculate the value of $h$. We are able to carry this out  completely when $2\tilde \chi+3\sigma\geq 0$. 

The following simple fact will be useful in Section 4. 

\begin{lemma} \label{reduction}  Under the natural isomorphism between $\Lambda^2$ and $\Lambda_{red}^2$, $\mathcal C_{\Lambda_{red} } =\mathcal C_{\Lambda}$. Consequently, $h_{\Lambda}=h_{\Lambda_{red}}$.

\end{lemma}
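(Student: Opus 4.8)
The plan is to observe that every ingredient entering the definition of an adjunction class is one of the invariants that $\Lambda$ and $\Lambda_{red}$ share. Recall from the construction in Lemma~\ref{Lef} that $\Lambda \cong \Lambda_{red} \oplus (b_1(\Lambda) - \tilde b_1(\Lambda))\Lambda_S$, and from the remark following the direct sum construction that passing from a cohomology algebra to its sum with copies of $\Lambda_S$ leaves $b^+$, $\tilde b_1$, $\tilde\chi$, $\Lambda^2$ and $\Gamma$ unchanged. In particular, the natural isomorphism $\Lambda^2 \cong \Lambda_{red}^2$ (which is essentially the identity, since $\Lambda_{red}^2 = \Lambda^2$) intertwines the two symmetric forms $\Gamma$, and one has $\sigma(\Lambda) = \sigma(\Lambda_{red})$ and $\tilde\chi(\Lambda) = \tilde\chi(\Lambda_{red})$.

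First I would record that the characteristic condition reads identically on both sides: a class $c$ is characteristic if and only if $c\cdot A \equiv A\cdot A \ (\text{mod}\ 2)$ for all $A \in \Lambda^2$, and this depends only on $\Gamma$, which is preserved by the identification.

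The one point requiring a short argument is that $\text{Im}\, T$ is preserved under $\Lambda^2 \cong \Lambda_{red}^2$. Here I would use the decomposition from the proof of Lemma~\ref{Lef}: $\Lambda^1 = \tilde\Lambda^1 \oplus \Lambda_0^1$ with $\Lambda_0^1 = \ker T$, and $\Lambda_{red}^1 = \tilde\Lambda^1$, so that the form $T$ for $\Lambda_{red}$ is the restriction of $T$ to $\tilde\Lambda^1 \times \tilde\Lambda^1$. Since every element of $\Lambda_0^1$ pairs trivially with all of $\Lambda^1$ under $T$, the image of this restriction coincides with $\text{Im}\, T$ on all of $\Lambda^1 \times \Lambda^1$. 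Hence $\text{Im}\, T$ agrees for $\Lambda$ and $\Lambda_{red}$ inside $\Lambda^2$, and $T$ is non-trivial for one exactly when it is for the other.

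With these identifications in hand, conditions (I) and (II) defining the adjunction classes become word-for-word the same for $\Lambda$ and $\Lambda_{red}$: condition (I) involves only $c\cdot c$ and $\sigma$, while condition (II) involves $c\cdot c$, the quantity $2\tilde\chi + 3\sigma$, and the pairing of $c$ with $\text{Im}\, T$. Therefore $\mathcal C_{\Lambda_{red}} = \mathcal C_{\Lambda}$. Finally, since $h_c(A)$ is defined purely in terms of $A\cdot A$ and $c\cdot A$ (again only $\Gamma$) and $h(A) = \max_c h_c(A)$ ranges over the same set of adjunction classes, the equality $h_{\Lambda} = h_{\Lambda_{red}}$ follows at once. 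The only genuine content is the stability of $\text{Im}\, T$, and I anticipate no serious obstacle beyond carefully invoking the decomposition of Lemma~\ref{Lef}.
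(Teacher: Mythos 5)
Your proof is correct and is essentially the paper's intended argument: the paper states this lemma without proof as a ``simple fact,'' resting on its earlier observation that $\Lambda\cong\Lambda_{red}\oplus l\Lambda_S$ leaves $\Lambda^2$, $\Gamma$ (hence $\sigma$ and the characteristic condition), $\tilde b_1$, $\tilde\chi$, and $\mathrm{Im}\,T$ unchanged, which is exactly what you verify. Your explicit check that $\mathrm{Im}\,T$ is preserved, via the decomposition $\Lambda^1=\tilde\Lambda^1\oplus\ker T$ from Lemma~\ref{Lef}, merely spells out the one detail the paper leaves tacit.
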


\subsection{Cohomology algebras with $b^+=1$ and  $2\tilde \chi+3\sigma\geq 0$}\label{nonneg inv}

Observe that conditions $b^+ =1$ and $2\tilde \chi+3\sigma\geq 0$ imply that $1\geq \sigma \geq 4\tilde b_1 -8$, so either 
$\sigma\geq -8$,  $\tilde b_1=0$ or $\sigma=0, \tilde b_1=2$. 
We point out that there can be
adjunction classes of type II that are not of type I only when $\tilde{b}_1=2$.
 The list of $\Gamma$ in this case is 
 $$\langle 1 \rangle \oplus n\langle -1\rangle, 0\leq n\leq 9, \quad   U\oplus  q(-E), q=0,1$$ 
 The algebra $\Lambda$ is divided into 5 cases according to $T$ and $\Gamma$: 
 \begin{enumerate}
 \item $T$ trivial and  $\Gamma=U$,
 \item $T$ trivial and $\Gamma=U\oplus (-E)$,
 \item $\tilde b_1(\Lambda)=2$ and $\Gamma=U$,
 \item $T$ trivial and  $\Gamma=\langle 1 \rangle \oplus n\langle -1\rangle, 0\leq n\leq 9$,
 \item $\tilde b_1(\Lambda)=2$ and $\Gamma=\langle 1\rangle \oplus \langle -1\rangle$.
 \end{enumerate}

 \subsubsection{$Aut(\Gamma)^T$ and reduced classes}

We now  introduce the notion of reduced classes (under a choice of basis) in the five cases. 
We also describe $Aut(\Gamma)^T$ explicitly in {\bf Cases  (1), (3), (4), (5)}.  
For this purpose, notice that $-Id\in Aut(\Gamma)^T$, and $Aut(\Gamma)^T=Aut(\Gamma)$
 whenever $T$ is trivial. 
Notice also that $\Gamma|_{\text{Im}T}=0$. 
 
 {\bf Case (1)} 
 Take a  basis $\mathcal B=\{F, B\}$ of $ \Lambda^2$ with $F\cdot F=B\cdot B=0$, $F\cdot B=1$. 
 The automorphism group  $Aut(\Gamma)^T=\Z_2\oplus \Z_2$ generated by $-Id$ and switching of $F$ and $B$. 
 
 Reduced classes:  $aF+bB$,  $a\geq |b| \geq 0$. 
 
{\bf Case (2)}
Consider a decomposition $ \Lambda^2= \Lambda^2_U \oplus  \Lambda^2_{-E}$ such that $\Gamma|_{\Lambda^2_U}\cong U,\Gamma|_{\Lambda^2_{-E}}\cong -E$. Take a basis $\mathcal B=\{F, B\}$ of $ \Lambda^2_U$ with $F\cdot F=B\cdot B=0$, $F\cdot B=1$. 



Reduced classes: $aF$ with $a\geq 0$, or $aF+bB+\xi$,  $\xi\in \Lambda^2_{-E}$ with $\displaystyle a\geq |b|> \frac{\sqrt{-\xi\cdot\xi}}{2}\geq 0$. 

{\bf Case (3)} 
Assume ${\rm Im}\, T$ is generated by $F$. In particular, $F\cdot F=0$.  Choose
a basis $\mathcal B=\{F, B \}$ of $ \Lambda^2$
 with $B\cdot B=0$ and $F\cdot B=1$.  
 
The automorphism group  $Aut(\Gamma)^T \cong\Z_2$ is generated by $-Id$ since   $F$ is preserved up to sign.  
 
Reduced classes:  $a F + bB$ with  $a>0$, or $a=0, b\geq 0$.

 {\bf Case (4)} 
 Let $n=b^-\leq 9$ and $\mathcal{B}=\{H, E_1,\dots, E_n\}$ be an orthogonal basis for $\Lambda^2$ such that $H\cdot H=1=-E_i\cdot E_i$.  

According to Wall \cite{Wall-uni-ii} 1.6.,  $Aut (\Gamma)^T$ is generated by reflections along
the classes $H, E_i, H-E_1-E_2-E_3,  E_i-E_j, i\ne j$.


Reduced classes:   $aH-\sum_{i=1}^n b_iE_i$ with $b_1\geq b_2\geq\cdots\geq b_n \geq 0$ and 
$$a\geq\left\{\begin{array}{ll}
b_1 &\hbox{if }n=1\\
b_1+b_2 &\hbox{if }n=2\\
b_1+b_2+b_3 &\hbox{if }n\geq 3\\
\end{array}\right.$$

{\bf Case (5)} 
In this case $\sigma=0$ and the intersection form is equivalent to  $V=\begin{pmatrix} 0 & 1 \\ 1 & 1 \end{pmatrix}$.    
Suppose ${\rm Im}\, T$ is generated by $F$. Notice that $F\cdot F=0$. We choose 
a basis $\mathcal B=\{F, B \}$ of $ \Lambda^2$
 with   $B\cdot B=F\cdot B=1$. 

The only non-trivial element of $Aut(\Gamma)^T$ is  $-Id$, since $F$ is preserved up to sign. 

Reduced classes: $a F + bB$ with  $a>0$, or $a=0, b\geq 0$.

\begin{lemma}\label{unique}
In each of the  five cases,  for any class $A\in \Lambda^2$ with $A\cdot A\geq 0$,  there exists a reduced class lying in the $Aut(\Gamma)^T$-orbit of $A$. 
Actually, such class is unique except in {\bf Case (2)}.
\end{lemma}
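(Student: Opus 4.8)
The plan is to treat the five cases in turn, proving \emph{existence} of a reduced representative in each $Aut(\Gamma)^T$-orbit of a class $A$ with $A\cdot A\geq 0$ by an explicit reduction procedure, and \emph{uniqueness} by showing that the reduced region meets each such orbit in a single point outside {\bf Case (2)}. The three rank-two cases (1), (3), (5) are immediate, since there the reduced condition is essentially a normalization; the genuine content, and the main obstacle, lies in {\bf Case (4)}, where $Aut(\Gamma)^T$ is the full (and, for large $n$, infinite) reflection group of the odd diagonal lattice. {\bf Case (2)} is where I expect uniqueness to break, for the reason explained below.

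For {\bf Cases (3)} and {\bf (5)}, $Aut(\Gamma)^T=\{\pm \mathrm{Id}\}$, so the orbit of $A=aF+bB$ is just $\{A,-A\}$; exactly one of these has $a>0$, or, when $a=0$, has $b\geq 0$, and when $A=0$ both coincide. Hence existence and uniqueness are immediate and do not even use $A\cdot A\geq 0$. For {\bf Case (1)}, $Aut(\Gamma)^T=\Z_2\oplus\Z_2$ acts on $(a,b)$ by the equal-sign maps $(a,b)\mapsto(\pm a,\pm b)$ and the swap $(a,b)\mapsto(b,a)$; writing $A\cdot A=2ab\geq 0$ gives $ab\geq 0$, so after applying $-\mathrm{Id}$ I may assume $a,b\geq 0$ and after the swap $a\geq b\geq 0=|b|$, which is reduced. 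Inspecting the four orbit elements shows that only this one lies in the reduced region (the degenerate coincidences $a=b$ and $b=0$ being handled directly), giving uniqueness.

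{\bf Case (4)} is the heart of the matter. Given $A=aH-\sum b_iE_i$ with $A\cdot A=a^2-\sum b_i^2\geq 0$, I first apply $r_H$ and the $r_{E_i}$ to arrange $a\geq 0$ and all $b_i\geq 0$, and the $r_{E_i-E_j}$ to sort $b_1\geq b_2\geq\cdots\geq b_n\geq 0$; none of these changes $a$. If the reduced inequality on $a$ already holds I am done. Otherwise, for $n\geq 3$, I apply the Cremona reflection $r_{H-E_1-E_2-E_3}$, which sends $a$ to $a'=2a-(b_1+b_2+b_3)$. The key point is that $A\cdot A\geq 0$ forces $b_i\leq a$ for every $i$, whence $b_1+b_2+b_3\leq 3a$, with equality impossible unless $A=0$; so in the non-reduced case $a<b_1+b_2+b_3$ one gets $|a'|<a$. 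Thus after re-signing and re-sorting the non-negative integer $a$ strictly decreases, and since it cannot drop below the reduced threshold the procedure terminates at a reduced class. This argument is uniform in $n$, including the infinite-group case $n=9$, precisely because $A\cdot A\geq 0$ supplies the a priori bound $b_i\leq a$; the cases $n=1,2$ use the corresponding smaller reflection sets, with the reflection in the $(-1)$-class $H-E_1-E_2$ playing the role of the Cremona move when $n=2$. For uniqueness I argue conversely: if $A$ is reduced then $r_{H-E_1-E_2-E_3}$ gives $a'=2a-(b_1+b_2+b_3)\geq a$ while the sorting and sign reflections preserve $a$, so $a$ is minimal in the orbit; hence two reduced classes in one orbit share the same $a$, after which the stabilizer of that minimal level reduces to signed permutations and the sorted, non-negative normalization pins down the $b_i$ uniquely.

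Finally, {\bf Case (2)}, $\Gamma=U\oplus(-E)$ with $T$ trivial, is where uniqueness fails. For existence I reduce the $U$-summand as in {\bf Case (1)} while using the isometries of $U\oplus(-E)$ coming from the $E_8$-roots (reflections together with transvections mixing $U$ and $(-E)$) to drive the $(-E)$-component $\xi$ into the stated range $a\geq|b|>\frac{\sqrt{-\xi\cdot\xi}}{2}$, the inequality following from $A\cdot A\geq 0$. Uniqueness, however, breaks because the reduced description constrains $\xi$ only through its norm $-\xi\cdot\xi$: any two vectors of $\Lambda^2_{-E}$ of equal norm lying in one $W(E_8)$-orbit yield distinct reduced classes in a single $Aut(\Gamma)^T$-orbit, and $E_8$ admits several vectors of a given length, so such orbits generically contain several reduced classes. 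This is exactly the exception recorded in the statement. The main obstacle throughout is the {\bf Case (4)} fundamental-domain property: establishing both the termination of the Cremona descent and the minimality of $a$ for reduced classes, uniformly across the finite and infinite reflection groups that arise as $n$ ranges over $0$ through $9$.
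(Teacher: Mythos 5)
Your Cases (1), (3), (5) are fine (the paper also dismisses them as immediate consequences of the explicit description of $Aut(\Gamma)^T$), and your Cremona-descent proof of \emph{existence} in \textbf{Case (4)} is correct and self-contained --- the key observation that $A\cdot A\geq 0$ forces $b_i\leq a$, so that the Cremona reflection strictly decreases $a$ on non-reduced classes, is exactly right, and this is actually more than the paper does, since the paper handles Case (4) entirely by citing \cite{Li}, Proposition 1 (and \cite{GZ}). However, there are two genuine gaps. The larger one is \textbf{Case (2)}, which is where the paper's proof does essentially all of its work. There the reduced condition couples the $U$-coordinates $(a,b)$ with the $E_8$-component $\xi$, and one needs a concrete mechanism: the paper invokes Wall's result (\cite{Wall-uni-ii}, 5.10), which supplies $\omega\in\Lambda^2_{-E}$ with $\xi+b\omega=0$ or $0<|(\xi+b\omega)\cdot(\xi+b\omega)|<2b^2$, applies the explicit automorphism $E_\omega$ fixing $F$ and sending $B\mapsto B+\omega-N(\omega)F$, $\xi\mapsto\xi-(\xi\cdot\omega)F$ (where $N(\omega)=\frac{\omega\cdot\omega}{2}$), and then runs an induction on $m(A)=\min(a,b)$, showing this quantity strictly drops whenever the image is still non-reduced, after interchanging the roles of $F$ and $B$. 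Your proposal replaces all of this by the assertion that one should use ``isometries coming from the $E_8$-roots \dots to drive $\xi$ into the stated range.'' That is a restatement of the goal, not an argument: you name no specific isometries, no decreasing invariant, and no reason the process terminates. As written, existence in Case (2) --- the heart of the paper's proof --- is unproved. (Your explanation of why uniqueness fails in Case (2) is fine, though the lemma does not require it.)

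The second gap is \textbf{Case (4) uniqueness}. From the fact that each generating reflection, applied to a reduced class, does not decrease $a$, you conclude that ``$a$ is minimal in the orbit.'' That inference is invalid: orbit elements are images under arbitrary words in the generators, and a word may first increase $a$ and later push it below its initial value; non-decrease under single generators at one point of the orbit gives no control on the orbit minimum. Likewise, the claim that two reduced classes with the same minimal $a$ must be related by a signed permutation is asserted, not proved --- the isometry connecting them is a priori an arbitrary word in the reflections, not an element preserving the $H$-coefficient levelwise. Uniqueness here is a genuine theorem (it is precisely the content of Li's Proposition 1, usually established by a fundamental-domain/chamber argument for the reflection group), and your paragraph does not supply a proof. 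A minor related point: for $n=2$ your descent uses the reflection in the $(-1)$-class $H-E_1-E_2$, which lies outside the paper's stated generator list; it is indeed an integral isometry, hence legitimate for existence, but this should be said rather than implicitly assumed.
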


\begin{proof}
This is straightforward to see using the description of $Aut(\Gamma)^T$ above for {\bf Cases  (1), (3),  (5)}.  
For {\bf Case (4)} this was established by \cite{Li}, Proposition~1  (This is still true for arbitrary $n$ by \cite {Li} and \cite{GZ}).

It remains to deal with  {\bf Case (2)}. Let $A=aF+bB+\xi$ with $A\cdot A=2ab+\xi\cdot \xi\geq 0$. We may assume $A$ is primitive and $a\geq b>0$ because $\xi\cdot \xi\leq 0$. To find a reduced class in the $Aut(\Gamma)^T$-orbit of $A$, we will make induction on $m(A):=$min$(a,b)$.

Let us recall the following fact.
\begin{lemma}[\cite{Wall-uni-ii}, 5.10] \label{E8}
For all $\xi \in \Lambda^2_{-E}$ and nonzero integers $b$, we can find $\omega\in\Lambda^2_{-E}$ such that either $\xi+b\omega=0$, or 
$$0<|(\xi+b\omega)\cdot (\xi+b\omega)|<2b^2.$$
\end{lemma}

We also need the automorphism $E_\omega\in Aut(\Gamma)$ given by
\begin{align*}
E_\omega (F)&=F,\\
 E_\omega (B)&=B+\omega-N(\omega)F,\\
  E_\omega (\xi)&= \xi-(\xi\cdot\omega)F, \forall \xi \in \Lambda^2_{-E}
 \end{align*}
where $N(\omega)=\frac{\omega\cdot \omega}{2}$. 

 If $A$ is not reduced, apply Lemma \ref{E8} to $\xi$ and $b$, there exists $\omega\in \Lambda^2_{-E}$ such that $\xi+b\omega=0$ or 
$$0<|(\xi+b\omega)\cdot (\xi+b\omega)|<2b^2$$
Let $A'=E_\omega(A)=(\xi+b\omega)+(a-bN(\omega)-\xi\cdot\omega)F+bB$. $A'$ can be non-reduced only when $a-bN(\omega)-\xi\cdot\omega<b$ and $|(\xi+b\omega)\cdot (\xi+b\omega)|\geq 2(a-bN(\omega)-\xi\cdot\omega)^2$. In this case, $m(A')=a-bN(\omega)-\xi\cdot\omega<m(A)$. We may now interchange the role of $F, B$ and repeat the process by induction.
\end{proof}

\begin{remark} The proof for {\bf Case (2)} can be generalized to classes with negative square and to $\Gamma=U\oplus kE$. It partially demonstrates a fact mentioned in \cite{Wall-uni}:
If $\Gamma$ is nearly definite (min$\{b^+,b^-\}=1$) and of even type, there are only finitely many equivalence classes of vectors of given norm, divisor and type.
\end{remark}

\subsubsection{Calculation of $h$ for reduced classes}

Let us first make a few   observations. 

1. Given $A$, we want to minimize  $|c\cdot A|$ among all adjunction classes.


2. When $T$ is trivial there are only type I adjunction classes. 

3. Notice that $2\tilde{\chi}+3\sigma=\sigma+8-4\tilde{b}_1$. So  we need to consider both type I and   type II adjunction classes   when $\tilde{b}_1\geq 2$.

We  assume $A$ is a nonzero class of non-negative square. 

\begin{prop}\label{h for reduced}
Suppose $2\tilde \chi(\Lambda)+3\sigma(\Lambda)\geq 0$ and $\Lambda$ is as in {\bf Cases  (1), (2), (3),  (5)}. 
Suppose a basis $\mathcal B$ is given as in Section 2.3.1 and $A\in \Lambda^2$ is a reduced 
class with $A\cdot A\geq 0$.  There is an adjunction class $c_0$, independent of $A$,  
such that
$h(A)=h_{c_0}(A)$. Explicitly, with respect to the basis $\mathcal B$, $c_0$ is given in each case as follows. 

\begin{itemize}

\item {\bf Case (1)} $c_0=2F+ 2B$,

\item {\bf Case (2)} $c_0=0$,

\item {\bf Case (3)} $c_0=2B$,


\item {\bf Case (5)} $c_0=F-2B$.

\end{itemize}

\end{prop}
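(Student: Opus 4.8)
Since $A\cdot A$ is fixed, maximizing $h_c(A)=1+\tfrac{A\cdot A-|c\cdot A|}{2}$ over $c\in\mathcal C_{\Lambda}$ is the same as minimizing $|c\cdot A|$. So the plan is to prove, in each of the four cases, that the displayed class $c_0$ is itself an adjunction class and realizes $|c_0\cdot A|=\min_{c\in\mathcal C_{\Lambda}}|c\cdot A|$ for every reduced $A$ with $A\cdot A\ge 0$; the equality $h(A)=h_{c_0}(A)$ then follows. Writing $A=aF+bB$ and recording each $c$ by the integers $p=c\cdot F$ and $q=c\cdot B$, one has $c\cdot A=ap+bq$, and I would first translate, via the characteristic condition and the identity $2\tilde\chi+3\sigma=\sigma+8-4\tilde b_1$ (which equals $8,3,0,0$ in the four cases), the set of adjunction classes into explicit parity and sign constraints on $(p,q)$.

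I would dispatch the easy cases first. In \textbf{Case (2)} the form $U\oplus(-E)$ is even, so $0$ is characteristic, and $0>\sigma=-7$ makes $0$ a type I adjunction class; since $|0\cdot A|=0\le|c\cdot A|$ for all $c$, the choice $c_0=0$ works at once. In \textbf{Cases (1)} and \textbf{(3)} the characteristic classes are those with $p,q$ even, and the adjunction conditions become $pq>0$ in Case (1) and $pq\ge 0$ with $p\ne 0$ in Case (3); in particular every type I class is already of type II in Case (3), so no adjunction class is omitted. These constraints force $p,q$ to have the same sign, while reducedness together with $A\cdot A\ge 0$ forces $a,b\ge 0$. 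Hence no cancellation occurs in $ap+bq$, and $|c\cdot A|$ is minimized by taking the coordinates of $c$ as small as the constraints allow: $(p,q)=(2,2)$, i.e. $c_0=2F+2B$, giving $|c_0\cdot A|=2(a+b)$ in Case (1); and $(p,q)=(2,0)$, i.e. $c_0=2B$, giving $|c_0\cdot A|=2a$ in Case (3).

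The heart of the argument, and the main obstacle, is \textbf{Case (5)}, where the form is odd and indefinite and $T$ is nontrivial, so both adjunction types occur. Here the characteristic classes are those with $p$ even and $q$ odd, one has $c\cdot c=p(2q-p)$, and one checks the adjunction classes are exactly those with $p\ne 0$ and $p(2q-p)\ge 0$ (again type I $\subseteq$ type II). The class $c_0=F-2B$ corresponds to $(p,q)=(-2,-1)$, lies on the wall $c\cdot c=0$, and gives $|c_0\cdot A|=|2a+b|$. The difficulty is that the admissible region for $(p,q)$ is a union of two half-planes rather than a single quadrant, so $ap$ and $bq$ can partially cancel; moreover the locus $A\cdot A\ge 0$ among reduced classes is disconnected (for $a>0$ it is $b\ge 0$ together with $b\le -2a$), and in the second range $a,b$ have opposite signs, so the bookkeeping used in Cases (1) and (3) breaks down.

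To overcome this I would fix $p$ and minimize $q\mapsto|ap+bq|$ over the admissible half-line $\{\,q:\mathrm{sign}(p)(q-p/2)\ge 0\,\}$. The key observation is that the unconstrained real minimizer $q^{*}=-ap/b$ satisfies $\mathrm{sign}(p)\,(q^{*}-p/2)=-|p|\,(2a+b)/(2b)$, which is nonnegative, i.e. $q^{*}$ lies \emph{inside} the admissible half-line, precisely when $b(2a+b)=A\cdot A\le 0$. Thus the hypothesis $A\cdot A\ge 0$ is exactly what expels the free minimizer from the adjunction region, so the constrained minimum is attained at the endpoint $q=p/2$, where $|ap+bq|=\tfrac{|p|}{2}\,|2a+b|\ge|2a+b|=|c_0\cdot A|$ because $|p|\ge 2$; the degenerate cases $b=0$ and $A\cdot A=0$ are checked directly. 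Minimizing over $p$ then pins the minimum at $|p|=2$, i.e. at $\pm c_0$, completing Case (5). I expect this reduction, identifying $A\cdot A\ge 0$ as the geometric condition separating $q^{*}$ from the wall, to be the crux, with the remaining cases following the same same-sign and boundary estimates.
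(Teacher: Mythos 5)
Your proposal is correct, and its overall strategy---parametrizing characteristic classes in the given basis, translating the two adjunction conditions into parity and sign constraints on $(c\cdot F,\,c\cdot B)$, and then minimizing $|c\cdot A|$---is exactly the paper's. Cases (1), (2), (3) coincide with the paper's treatment, up to two harmless numerical slips in Case (2): the signature of $U\oplus(-E)$ is $-8$, not $-7$, and $2\tilde\chi+3\sigma$ equals $0$ there, not $3$; since your argument only uses that the form is even (so $0$ is characteristic) and that $0>\sigma$, nothing breaks. The genuine divergence is Case (5). The paper normalizes $c=(2q-1)F+2pB$ with $2q-1\geq 0$ via $-\mathrm{Id}$, splits the reduced locus into the two branches $b\geq 0$ and $2a+b\leq 0$, and then asserts by inspection of signs that $|c\cdot A|=|(2a+b)p+(p+2q-1)b|$ is minimized at $p=-1$, $p+2q-1=0$. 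You instead fix $p=c\cdot F$ and observe that the unconstrained minimizer $q^{*}=-ap/b$ of the convex function $q\mapsto|ap+bq|$ lies in the admissible half-line $\mathrm{sign}(p)(q-p/2)\geq 0$ precisely when $A\cdot A\leq 0$, so for $A\cdot A\geq 0$ the constrained minimum sits at the endpoint $q=p/2$ with value $\tfrac{|p|}{2}\,|2a+b|\geq|2a+b|$, attained by $c_0$ itself at $(p,q)=(-2,-1)$. This treats both branches of the reduced locus uniformly and makes transparent why the hypothesis $A\cdot A\geq 0$ is exactly what is needed, whereas the paper's sign bookkeeping is shorter but leaves the key minimization as an assertion; your use of the real relaxation is legitimate since only a lower bound is required, and the integrality of $q$ (odd) never hurts because $c_0$ realizes the bound.
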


\begin{proof}
 
{\bf Case (1)}.
Suppose $A =aF+bB$ is reduced, namely,  $a\geq |b|\geq 0$. 
Then $A\cdot A=2ab\geq 0$ implies $b\geq 0$. 

Any  characteristic class is of the form  $c =2kF+2lB$.  
We also assume that  $k\geq 0$ since we try to minimize the absolute value $|c\cdot A|$. 

Since $T$ is trivial, there are only  type I adjunction classes. Such a class
$c$ should satisfy
$$c\cdot c=8kl>\sigma=0,  $$
so $k, l>0$. 
Since $a, b\geq 0$ and $k, l>0$, 
$$|c\cdot A|=|2al+2bk|=2al+2bk,$$ and it  reaches the minimum when $k=l=1$.


For $c_0=2F+2B$ and $A$ a reduced class,  we obtain
\begin{equation} h(A)=h_{c_0}(A)= (a-1)(b-1). \label{ha-1} \end{equation}

{\bf Case (2)}.
Again there are only  type I adjunction classes since $T$ is trivial.  The class $0$  is characteristic and $\sigma=-8<0$, so we  take $c_0=0$. 
Clearly,  $|c_0\cdot A|=0$ reaches its minimum for any $A$, and thus  
\begin{equation} \label{enriques} h(A)=h_{c_0}(A)=\frac{1}{2}  (A\cdot A +2).
\end{equation}

Notice that this formula is in fact valid for an arbitrary class with non-negative square, not just
the reduced ones.

{\bf Case (3)}.
Let $A=aF +bB$ be a reduced class, namely, $a> 0$, or $a=0, b\geq 0$. 
Since  $A\cdot A=2ab\geq 0$, 
we always  have $b\geq 0$.

Any characteristic class is of the form $c=2qF+2pB$ with $q\geq 0$ up to the automorphism $-Id$.

We first consider type II adjunction classes. Such a class $c$ satisfies 
 $$c\cdot c=8pq\geq 2\tilde \chi+3\sigma=0\quad {  and }  \quad c\cdot F=2p\ne 0.$$
Notice that these conditions imply that $p >0$.
Since $a, b, p, q\geq 0$ and $p> 0$,  $|c\cdot A|=|2ap+2bq|$ is minimized by $p=1$ and $q=0$.
Therefore, we take  $c_0=2B$ and have 
\begin{equation}
h(A)= h_{c_0}(A)= a(b-1) +1.  \label{ha-3}
\end{equation}

As for type I adjunction classes, observe that $\sigma=0$. So any such class also has nonzero $p, q$ coefficients and is of type II. Therefore $h(A) = h_{c_0}(A)$.

{\bf Case (5)}. This  case is similar to {\bf Case (3)}. 
Suppose $A=aF+bB$ is a reduced class, either $a>0$ or $a=0, b\geq 0$. 
Then $A\cdot A=(2a+b)b\geq 0$ implies either $b\geq 0$ or $2a+b\leq  0$.

Any  characteristic class is of the form $c=(2q-1)F+2pB$. And we will assume that $2q-1\geq 0$.  We first consider type II adjunction classes.
Such a class satisfies 
$$c\cdot c=4p(p+2q-1)\geq 2\tilde \chi+3\sigma=0\quad {  and }  \quad c\cdot F=2p\ne 0.$$
Thus either $p>0$ or $p+2q-1\leq 0$.  
To minimize $|c\cdot A|$, we write
$$|c\cdot A|=|(2a+b)p+(p+2q-1)b|.$$
In each case, we can show that  $|c\cdot A|$  is minimized by taking 
$p+2q-1=0$ and $p=-1$. 

Therefore, we take $c_0=F-2B$ and have 
\begin{equation} h(A)=h_{c_0}(A)=\frac12 (b-1)(2a+b) +1.
\label{ha-5} \end{equation}
As for type I adjunction classes, observe that $\sigma=0$. So any such class also has nonzero $p, q$ coefficient and is of type II. Therefore $h(A) = h_{c_0}(A)$.

\end{proof}


For {\bf Case(4)} we have the following analogous statement. Since the proof is long and elementary,
we defer it to the Appendix. 

\begin{lemma} \label{case4} Suppose $\Lambda$ is as in {\bf Case (4)} and $A=aH-\sum_{i=1}^n b_i E_i$ is a reduced class with
$A\cdot A\geq 0$. Then we take
$c_0=3H-\sum_{i=1}^{n} E_i$ and have
\begin{equation}
h(A)= h_{c_0}(A)= \frac{(a-1)(a-2)-\sum b_i(b_i-1)}{2}  =\binom{a-1}{2} -\sum_{i=1}^n \binom{b_i}{2}.\label{negoddgenus}
\end{equation}

\end{lemma}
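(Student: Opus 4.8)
The plan is to exploit that in \textbf{Case (4)} the pairing $T$ is trivial, so every adjunction class is of type I, and (by the characteristic condition recalled in the introduction) a class $c=pH-\sum_i q_iE_i$ is characteristic exactly when all of $p,q_1,\dots,q_n$ are odd. First I would check that the proposed $c_0=K:=3H-\sum_iE_i$ is an adjunction class: its coordinates are odd and $K\cdot K=9-n>1-n=\sigma(\Lambda)$, so condition (I) holds. A direct computation gives $K\cdot A=3a-\sum_ib_i$, and since $n\le 9$ and $A\cdot A\ge0$ forces $\sum_ib_i^2\le a^2$, Cauchy--Schwarz yields $\sum_ib_i\le\sqrt n\,(\sum_ib_i^2)^{1/2}\le 3a$, hence $K\cdot A\ge0$. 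Substituting $|K\cdot A|=K\cdot A$ into $h_{c_0}$ and rewriting $a^2-3a=(a-1)(a-2)-2$ and $b_i^2-b_i=b_i(b_i-1)$ produces the claimed formula. This part is routine; the real content is to show that $K$ realizes the maximum $h(A)$, i.e. that $|c\cdot A|\ge K\cdot A$ for \emph{every} adjunction class $c$.

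For this, using $h_c=h_{-c}$ I may first record that $|p|\ge 3$: each $q_i$ is odd, so $\sum_iq_i^2\ge n$ and $p^2=c\cdot c+\sum_iq_i^2\ge(9-n)+n=9$. It then suffices to prove, for $p\ge 3$, that $c\cdot A\ge K\cdot A$; since $K\cdot A\ge0$ this gives $|c\cdot A|=c\cdot A\ge K\cdot A$, and the case $p\le-3$ follows by applying the result to $-c$. Replacing each $q_i$ by $|q_i|$ only enlarges $\sum_i(q_i-1)b_i$ (as $b_i\ge0$) while preserving $c\cdot c$, so I may assume $q_i\ge1$, say $q_i=1+2s_i$ with $s_i\ge0$. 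Writing $P=(p-3)/2\in\Z_{\ge0}$, the target inequality $(p-3)a\ge\sum_i(q_i-1)b_i$ becomes the key estimate
\[
\sum_i s_ib_i\le Pa,\qquad\text{given}\qquad \sum_i s_i(s_i+1)\le P(P+3).
\]

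To prove the key estimate I would decompose $A$ along sorted differences. By the rearrangement inequality I may assume $s_1\ge\cdots\ge s_n\ge0$. Setting $d_j=b_j-b_{j+1}\ge0$ (with $b_{n+1}=0$) and $S_j=s_1+\cdots+s_j$, Abel summation gives $\sum_is_ib_i=\sum_jd_jS_j$, while the reduced condition reads $a\ge b_1+b_2+b_3=\sum_jd_j\min(j,3)$. Hence it is enough to establish the partial-sum bounds $S_j\le P\min(j,3)$ for all $j$: then $\sum_jd_jS_j\le P\sum_jd_j\min(j,3)\le Pa$. For $j\le3$ this follows from $s_1\le P$ (immediate from $s_1(s_1+1)\le P(P+3)$), since $S_j\le js_1\le jP$. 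For $j\ge3$ it reduces to $\sum_is_i\le 3P$, and here $n\le9$ enters through $\sum_is_i^2\ge(\sum_is_i)^2/n\ge(\sum_is_i)^2/9$, which combined with $\sum_is_i^2+\sum_is_i\le P(P+3)$ forces $\sum_is_i\le 3P$.

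The main obstacle is precisely this key estimate. The naive bound $\sum_is_ib_i\le(\sum_is_i^2)^{1/2}(\sum_ib_i^2)^{1/2}$ is too weak, because it uses only $A\cdot A\ge0$, whereas the sharp inequality genuinely needs the stronger reduced inequality $a\ge b_1+b_2+b_3$ together with $n\le9$. Reorganizing the sum through the differences $d_j$, so that the reduced condition enters exactly as the weight $\min(j,3)$, and then isolating the two partial-sum bounds (with $n\le9$ controlling the tail), is the crux; the surrounding bookkeeping --- the $\pm c$ symmetry, the signs of the $q_i$, and the $n=1,2$ forms of the reduced condition (which are uniformized by setting $b_i=0$ for $i>n$) --- is elementary.
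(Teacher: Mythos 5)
Your proof is correct, and it shares the paper's outer strategy: reduce to adjunction classes with positive $H$-coefficient and non-negative, sorted coefficients on the $E_i$ (via $h_c=h_{-c}$ and rearrangement), then show $c\cdot A\ge c_0\cdot A$. Where you genuinely differ is the proof of the core inequality. The paper normalizes $x_i=b_i/a$, pads to nine coordinates, dominates $\sum_i(e_i-1)x_i$ by the three-variable function $(e_1-1)x_1+(e_2-1)x_2+\bigl(\sum_{i\ge 3}(e_i-1)\bigr)x_3$, and maximizes this over the region $x_1\ge x_2\ge x_3\ge 0$, $x_1+x_2+x_3\le 1$ by a convexity case analysis whose extreme values occur at $(1,0,0)$ and $(\frac13,\frac13,\frac13)$. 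You instead substitute $p=3+2P$, $q_i=1+2s_i$, so that the dimension condition becomes exactly $\sum_i s_i(s_i+1)\le P(P+3)$, and run Abel summation against the differences $d_j=b_j-b_{j+1}\ge 0$, which makes the reduced condition enter precisely as $a\ge \sum_j d_j\min(j,3)$ and reduces everything to the partial-sum bounds $S_j\le P\min(j,3)$. The underlying ingredients are the same in both arguments: your $s_1\le P$ (which needs integrality, available here) is the paper's $e_1-1\le k-3$ coming from parity plus the dimension bound, and your $\sum_i s_i\le 3P$ is the paper's $\frac13\sum_{i=1}^9 e_i\le k$, both obtained by Cauchy--Schwarz using $n\le 9$. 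What your route buys is the elimination of the continuous optimization and its case analysis: the discrete majorization argument is self-contained and arguably cleaner; the paper's version, on the other hand, makes the extremal classes ($c$ with a single large coefficient versus $c$ with all coefficients equal) geometrically visible as vertices. Two harmless deviations: you derive $c_0\cdot A\ge 0$ from $A\cdot A\ge 0$ and Cauchy--Schwarz, while the paper derives it from the reduced condition alone ($\sum_{i=1}^n b_i\le 3(b_1+b_2+b_3)\le 3a$); and, like the paper, your argument tacitly assumes $A\ne 0$, since at $A=0$ the displayed formula gives $1$ whereas $h(0)=0$ by definition.
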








We remark that it is easy to check that  
$c_0$ in all five cases is characterized by  the following conditions: 

\begin{itemize}
\item $c_0$ is  characteristic,    

\item $c_0\cdot c_0=2\tilde \chi+3\sigma$,  

\item $c_0$  itself  is reduced, 

\item $c_0$ pairs non-trivially with Im$T$ when $T$ is non-trivial.

\item $c_0$ has smallest coefficients in absolute value among classes satisfying the above four conditions. 

\end{itemize}

\subsubsection{The sign of $h$}

We explore Proposition \ref{h for reduced} to determine when $h$ takes positive and non-negative values.

\begin{cor} \label{leq0} Suppose $2\tilde \chi(\Lambda)+3\sigma(\Lambda)\geq 0$ and $\Lambda$ is 
in {\bf Case (1), (3), (5)}. 
If $A=aF+bB\in \Lambda^2$ with $A\cdot A\geq 0$
 is reduced and  $h(A)\leq 0$, then $A$ is   given as follows.
\begin{itemize}

\item  {\bf Case (1)} $h(A)< 0$ when $a>1, b=0$, and $h(A)=0$ when $a\geq b=1$ or $(a, b)=(1,0), (0,0)$.  


\item  {\bf Case (3)} $h(A)< 0$ when $a>1, b=0$, and $h(A)=0$ when $(a, b)=(1,0), (0,0)$. 


\item {\bf Case (5)} $h(A)< 0$ when  $a>1, b=0$, and $h(A)=0$ when $(a, b)=(1,0), (0,0)$. 
\end{itemize}

\end{cor}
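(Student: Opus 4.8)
The plan is to read off the values of $h$ directly from the explicit formulas for $h(A)=h_{c_0}(A)$ established in Proposition~\ref{h for reduced} for each of the three cases, and then solve the elementary inequalities $h(A)\le 0$ and $h(A)<0$ under the standing reducedness constraints. The key observation driving everything is that in all three cases the reduced class has the form $A=aF+bB$ with $b\ge 0$ (this is exactly what the proof of Proposition~\ref{h for reduced} derives from $A\cdot A\ge 0$ together with reducedness), so the sign analysis reduces to understanding a concrete polynomial in the nonnegative integers $a,b$.

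First I would handle Case~(1), where \eqref{ha-1} gives $h(A)=(a-1)(b-1)$ with the reducedness constraint $a\ge b\ge 0$. Since this is a product of two integers, I would split on the value of $b$: when $b\ge 2$ both factors are $\ge 1$ (using $a\ge b$) so $h(A)\ge 1>0$; when $b=1$ the second factor vanishes so $h(A)=0$ regardless of $a$; and when $b=0$ we get $h(A)=-(a-1)$, which is negative for $a>1$, zero for $a=1$, and zero for $a=0$. This accounts precisely for the claimed list: $h(A)<0$ iff $a>1,b=0$, and $h(A)=0$ iff $a\ge b=1$ or $(a,b)\in\{(1,0),(0,0)\}$.

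Next I would treat Cases~(3) and~(5) in parallel, since they are structurally identical. For Case~(3), \eqref{ha-3} gives $h(A)=a(b-1)+1$ with reducedness $a>0$, or $a=0,b\ge 0$; for Case~(5), \eqref{ha-5} gives $h(A)=\tfrac12(b-1)(2a+b)+1$ with the same reducedness pattern. In each I would again split on $b$. For $b\ge 2$ the product term is nonnegative so $h(A)\ge 1>0$; for $b=1$ the product term vanishes leaving $h(A)=1>0$; for $b=0$ I substitute to get $h(A)=1-a$ in Case~(3) and $h(A)=1-a$ in Case~(5) as well (since $\tfrac12(-1)(2a)+1=1-a$), which is negative for $a>1$, zero for $a=1$, i.e.\ $(a,b)=(1,0)$, and zero for $(a,b)=(0,0)$. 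This yields exactly the stated dichotomy for both cases.

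I expect no serious obstacle here: the entire statement is a case-by-case evaluation of three explicit quadratic expressions on the lattice of reduced classes, and the only subtlety is bookkeeping the reducedness constraints correctly so that $a\ge b$ is available in Case~(1) (needed to conclude $h(A)\ge 1$ when $b\ge 2$) while only $a>0$ or $a=b=0$ is available in Cases~(3) and~(5). The mildest care is required in Case~(5) to verify that the factor $\tfrac12(b-1)(2a+b)$ is an integer and nonnegative for $b\ge 1$, $a>0$, which follows because $c_0$ is characteristic (guaranteeing integrality of $h_{c_0}$) and because $2a+b>0$ under reducedness. Once these elementary sign determinations are assembled, the corollary follows immediately.
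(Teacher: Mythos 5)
Your overall strategy is the same as the paper's: the corollary is meant to follow by substituting the explicit formulas \eqref{ha-1}, \eqref{ha-3}, \eqref{ha-5} of Proposition~\ref{h for reduced} into $h(A)\leq 0$ and $h(A)=0$ and solving. Your treatment of {\bf Cases (1)} and {\bf (3)} is correct. However, there is a genuine gap in {\bf Case (5)}, caused by your ``key observation'' that in all three cases reducedness together with $A\cdot A\geq 0$ forces $b\geq 0$. That is false in {\bf Case (5)}: there $A\cdot A=(2a+b)b\geq 0$ only forces $b\geq 0$ \emph{or} $2a+b\leq 0$ (exactly as the paper notes in the proof of Proposition~\ref{h for reduced}), and classes such as $A=F-2B$, or more generally $A=aF+bB$ with $a>0$ and $b\leq -2a$, are reduced, have non-negative square, and have $b<0$. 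Since the corollary is an exhaustive characterization of the reduced classes with $h(A)\leq 0$, your case split $b\geq 2$, $b=1$, $b=0$ leaves this entire family unexamined, so as written your argument does not establish the {\bf Case (5)} statement. The gap is easy to close: for $a>0$ and $b\leq -2a$ both factors of $(b-1)(2a+b)$ are nonpositive (the first strictly negative), hence
$$h(A)=\frac{1}{2}(b-1)(2a+b)+1\geq 1>0,$$
so no new classes with $h(A)\leq 0$ arise and the stated list is unchanged.

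A second, smaller point: the class $(a,b)=(0,0)$ is not covered by the formulas at all, since Proposition~\ref{h for reduced} is stated for nonzero $A$; indeed at $(0,0)$ the formulas evaluate to $1$, not $0$ (e.g.\ $(a-1)(b-1)=1$ in {\bf Case (1)}), contrary to your claim that the $b=0$ expression is ``zero for $a=0$.'' The correct justification is simply that $h_c(0)=0$ for every adjunction class $c$ by definition \eqref{c-genus}, hence $h(0)=0$.
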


\begin{proof}
The formulae  for $h(A)$ are given explicitly in \eqref{ha-1}-\eqref{ha-5}  for the three cases. We can solve $h(A)\leq 0$ and $h(A)=0$ easily.

\end{proof}
For {\bf Case (2)}, any class $A\neq 0$ with $A\cdot A\geq 0$ has $h(A)> 0$. 

For {\bf Case (4)},  we can also use  \eqref{negoddgenus}  to determine when 
$h$ takes
positive and non-negative values. Again we will defer the proof to the Appendix since
it is elementary but complicated. 

\begin{lemma} \label{leq0case4}
For {\bf Case (4)}. 
$h_{}(A)=0$ when    $A=H-E_1, H, 2H$ or $$a=b_1+1, b_1\geq 1, \, \, b_2=0, 1, \, \,b_i=0, i\geq 3,$$
$h_{}(A)<0$ when $A=a(H-E_1)$ with $a\geq 2$. 
\end{lemma}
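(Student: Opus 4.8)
The plan is to reduce everything to the explicit genus formula $h(A)=h_{c_0}(A)=\binom{a-1}{2}-\sum_{i=1}^n\binom{b_i}{2}$ of Lemma \ref{case4}, valid for a reduced class $A=aH-\sum b_iE_i$ with $A\cdot A\geq 0$, and then solve the two inequalities $h(A)\leq 0$ and $h(A)=0$ directly. Recall that ``reduced'' means $b_1\geq b_2\geq\cdots\geq b_n\geq 0$, together with $a\geq b_1$ (for $n=1$), $a\geq b_1+b_2$ (for $n=2$), or $a\geq b_1+b_2+b_3$ (for $n\geq 3$). Since each automorphism orbit contains a reduced representative (Lemma \ref{unique} for Case (4)) and $h$ is $Aut(\Gamma)^T$-invariant, it suffices to classify reduced classes with $h(A)\leq 0$.

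First I would treat $h(A)<0$. Because $\binom{a-1}{2}\geq 0$ and each $\binom{b_i}{2}\geq 0$, a negative value forces at least one $b_i\geq 2$, hence $b_1\geq 2$, and the reduction constraint then forces $a$ large. The key step is to bound how large $\sum\binom{b_i}{2}$ can be relative to $\binom{a-1}{2}$ given the reduction inequality on $a$; the extremal configuration should be when all the ``mass'' sits in a single $E_i$, i.e. $A=a(H-E_1)$ with $b_2=\cdots=0$ and $a=b_1$, where the formula collapses to $\binom{a-1}{2}-\binom{a}{2}=-(a-1)<0$ for $a\geq 2$. I would show that spreading the $b_i$ out, subject to $\sum b_i\leq a$ (or the analogous $n\leq 2$ bounds), can only increase $h$, so no other reduced class achieves $h<0$.

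Next I would pin down the boundary case $h(A)=0$. Here I would substitute and solve $\binom{a-1}{2}=\sum_{i=1}^n\binom{b_i}{2}$ subject to the reduction inequalities. The sporadic solutions $A=H-E_1,\,H,\,2H$ are the small cases where $\binom{a-1}{2}\in\{0,0,0\}$ (checked by hand: $a=1,1,2$ with the corresponding $b_i$), and the infinite family is $a=b_1+1,\ b_1\geq 1,\ b_2\in\{0,1\},\ b_i=0\ (i\geq 3)$, where $\binom{a-1}{2}=\binom{b_1}{2}$ matches with the possible extra $\binom{b_2}{2}=0$ contribution. The main obstacle is the bookkeeping: one must verify that these are \emph{all} the nonnegative-square reduced solutions, which requires carefully using the reduction bound to rule out, for each fixed $a$, any partition $(b_1,\ldots,b_n)$ other than the listed ones—essentially a finite but delicate case analysis separating $n=1$, $n=2$, and $n\geq 3$, and checking compatibility with $A\cdot A=a^2-\sum b_i^2\geq 0$. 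This is precisely the elementary-but-intricate computation the text defers to the Appendix, so I would organize it as a monotonicity argument in the $b_i$ plus a short enumeration of the few remaining edge cases.
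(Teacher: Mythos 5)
Your starting point coincides with the paper's: take the formula $h(A)=\binom{a-1}{2}-\sum_{i}\binom{b_i}{2}$ from Lemma \ref{case4} and classify the reduced classes with $h(A)\leq 0$. However, your key step rests on a misstated constraint, and this is a genuine gap. Being reduced only requires $a\geq b_1+b_2+b_3$ (the \emph{three largest} coefficients), not $\sum_{i=1}^n b_i\leq a$ as you impose in the minimization. Reduced classes violating your constraint exist in abundance: $A=4H-2E_1-E_2-E_3-E_4-E_5$ is reduced with $\sum b_i=6>4$, and $A=3kH-k(E_1+\cdots+E_9)$ is reduced with $\sum b_i=3a$. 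These are precisely the dangerous classes for the statement you need: the latter has $h(A)=\binom{3k-1}{2}-9\binom{k}{2}=1$, within one of violating nonnegativity. An argument carried out only over classes with $\sum b_i\leq a$ never examines this family, so it cannot yield the classification.

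The monotonicity idea also cannot carry the load as stated. The spreading moves $(b_i,b_j)\mapsto(b_i-1,b_j+1)$ preserve $\sum b_i$, so they only compare classes within a fixed stratum $\sum b_i=s$; classes with $s>a$ cannot be connected to the concentrated class $aH-aE_1$ without leaving the reduced cone, since concentrating moves violate $a\geq b_1+b_2+b_3$. Even where the comparison does apply, it gives $h(A)\geq h\bigl(a(H-E_1)\bigr)=-(a-1)$, a \emph{negative} lower bound that says nothing about whether $h(A)\geq 0$ for the other classes. To repair the strategy you would have to analyze each stratum $s\in(a,3a]$ separately, which is at least as long as the paper's computation. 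The paper instead substitutes $a=b_1+t$ (so that $t\geq b_2+b_3\geq 2b_3$), settles $t=0$ and $t=1$ by hand (these produce exactly the listed $h<0$ classes $a(H-E_1)$, $a\geq 2$, and the $h=0$ family), and for $t\geq 2$ bounds the tail by $\sum_{i=3}^n b_i(b_i-1)\leq 7\,b_3(b_3-1)$, then minimizes two concave quadratics in $b_2$ and $b_3$ over their admissible intervals to get $2h(A)\geq\min\{(t-1)(t-2),\,2\}\geq 0$, with equality tracked down to $A=2H$. Any correct write-up must, like this one, use only $a\geq b_1+b_2+b_3$ together with $b_i\leq b_3$ for $i\geq 4$, rather than a bound on the full sum of the $b_i$.
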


\section{Proof of Theorem \ref{main}}

\subsection{Seiberg-Witten invariants}
Let $X$ be a smooth, closed, connected, oriented four-manifold. Consider the exterior algebra  $$V(X)=\wedge^* H^1(X;\Z)/Tor.$$
In this section, we review  the $V(X)$-valued  Seiberg-Witten invariants.
For more details, see \cite{Mor,OT,LiLiu3}.

Suppose  $g$ is a Riemannian metric on $X$.
Let $P\rightarrow X$ be the $SO(4)$-frame bundle associated to the tangent bundle.
A spin$^c$ structure on $X$ is a lifting $\tilde{P}$ of $P$ to a $Spin^c(4)$-principal bundle.
The associated complex spinor bundle $S_\C(\tilde{P})\rightarrow X$ is decomposed as the direct sum of two rank-2 Hermitian vector bundles $S^\pm_\C(\tilde{P})$.  
The determinant line bundle of $\tilde{P}$ is defined by
$\mathcal{L}=\det (S^\pm_\C(\tilde{P}))$. It is known that $c_1(\mathcal{L})\in H^2(X;\Z)$ is characteristic and spin$^c$ structures are parametrized by characteristic classes if $H^2(X;\Z)$ has no 2-torsion.
For convenience, we also use $\mathcal{L}$ to denote $\tilde{P}$.
A unitary connection $A\in \mathcal{A}_\mathcal{L}$ on $\mathcal{L}$ and the Levi-Civita connection on $TX$ induce a Dirac operator
$$D_A: \Gamma(S^+_\C(\tilde{P}))\rightarrow \Gamma (S^-_\C(\tilde{P})).$$

For  any real-valued self-dual 2-form $\delta$ on $X$,  the (perturbed) Seiberg-Witten equations for a connection $A\in \mathcal{A}_\mathcal{L}$ and a section $\psi\in \Gamma (S^+_\C(\tilde{P}))$  are
$$ \begin{cases} D_A \psi=0, & \\
F_A^+ +i \delta=\psi\otimes \psi^*-\frac{|\psi|^2}{2}Id . \end{cases} $$

The gauge group $\mathcal{G} =\text{Map}(X, S^1)$ acts on the configuration space $\mathcal{C}(\tilde{P})= \mathcal{A}_\mathcal{L} \times \Gamma (S^+_\C(\tilde{P}))$. A pair $(A,\psi)$ is called reducible if $\psi\equiv 0$ and irreducible otherwise. Let $\mathcal{C}^*(\tilde{P})$ be the subset of irreducible pairs, on which gauge group acts freely. Let $\mathcal{B}(\tilde{P})=\mathcal{C}(\tilde{P}) /\mathcal{G}$ and $\mathcal{B}^*(\tilde{P}) =\mathcal{C}^*(\tilde{P}) /\mathcal{G}$ be the quotient spaces. 

Let $\mathcal{M}_{X,g,\delta}(\mathcal{L})$ be the quotient space of elements in $\mathcal{C}(\tilde{P})$ satisfying the Seiberg-Witten equations under the action of gauge group.
$\mathcal{M}_{X,g,\delta}(\mathcal{L})$ is always compact and, for generic $(g,\delta)$, it is a smooth oriented manifold of dimension 
$$k(\mathcal{L})=\frac{c_1(\mathcal{L})^2-(2\chi(X)+3\sigma(X))}{4}$$
in $\mathcal{B}^*(\tilde{P})$ (cf. \cite{KM} Lemma 5, \cite{W} Theorem 6.1.1). The orientation of $\mathcal{M}_{X,g,\delta}(\mathcal{L})$ is determined by an orientation of $H^1(X;\R) \oplus H^2_+ (X;\R)$. 

\subsubsection{$V(X)$-valued Seiberg-Witten invariant}
There is a natural line bundle over $X\times \mathcal{B}^*(\tilde{P})$, defined by $(X\times \mathcal{C}^*(\tilde{P}) \times \C )/\mathcal{G}$, where the action is given by $f \cdot (x, (A,\psi), z) =(x, \, f\cdot (A,\psi), f^{-1}(x)\, z)$, for any $f\in \mathcal{G}$, $x\in X$, $(A,\psi)\in \mathcal{C}^*(\tilde{P})$ and $z\in\C$. This action is free, and the quotient space is a line bundle over $X\times \mathcal{B}^*(\tilde{P})$, called the universal line bundle. 
 Its first Chern class $\mu \in H^2(X\times \mathcal{B}^*(\tilde{P});\Z)$ induces a linear map by slant product:
$$\phi: H_i(X;\Z) \to H^{2-i}(\mathcal{B}^*(\tilde{P}); \Z), \quad 0\leq i\leq 2, $$
where $\phi(a) =\mu/a$, for any $a\in H_i(X;\Z)$.

For  generic $(g,\delta)$, we  introduce the  $V(X)$-valued invariant ${\bf SW}(X,g,\delta,\mathcal{L})$. For a collection of homology classes $\gamma_1,\gamma_2,\dots, \gamma_p \in H_1(X;\Z)/Tor$, the Seiberg-Witten function is defined as (see \cite{LiLiu3}, \cite{OT})
$${\bf SW}(X,g,\delta,\mathcal{L})(\gamma_1\wedge\cdots \wedge \gamma_p)=<  \phi(\gamma_1) \cup\cdots \cup\phi(\gamma_p) \cup \phi(x_0)^{(k(\mathcal{L})-p)/2},[\mathcal{M}_{X,g,\delta}(\mathcal{L})]>$$
where $x_0$ is any point on $X$.
The above invariant is trivial unless $p \equiv k(\mathcal{L})\ (\text{mod}\ 2)$.  Note that
${\bf SW}(X,g,\delta, \mathcal{L})(1)$, denoted by $SW(X,g,\delta,\mathcal{L})$, is the usual Seiberg-Witten invariant. 

When $b^+>1$,  {\bf SW}  is independent of the choice of  generic $(g,\delta)$ and is denoted by
${\bf SW}(X,\mathcal{L})$. 

When  $b^+=1$, {\bf SW}  depends on the choice of a chamber.
 For a $4$-manifold with $b^+=1$, the set of nonzero real second cohomology classes of non-negative square has two components. The orientation of $H^2_+ (X;\R)$ picks one component $\mathcal{C}$ of them and is called the forward cone.
In this case, each metric $g$ induces a unique self-dual harmonic 2-form $\omega_g\in \mathcal{C}$ with $[\omega_g]\cdot [\omega_g]=1$.
 The Seiberg-Witten invariant only depends on the sign of $(2\pi c_1(\mathcal{L})-\delta)\cdot [\omega_g]$ rather than the choice of $(g,\delta)$.

\begin{definition}  Let $X$ be a smooth, closed, connected, oriented 4-manifold with $b^+=1$ and a choice of cohomology orientation.
For any  $Spin^c$ structure $\mathcal L$ with $k(\mathcal L)\geq 0$, we define
$${\bf SW}_+(X,\mathcal{L})={\bf SW}(X,g,\delta,\mathcal{L})$$ 
where $(g,\delta)$ is generic with $(2\pi c_1(\mathcal{L})-\delta)\cdot [\omega_g]>0$.
$  {\bf SW}_-(X,\mathcal{L})$ is defined similarly. 
\end{definition}

Notice that  ${\bf SW}_{\pm}(X,\mathcal{L})$ depend on the choice of the forward cone. They 
are switched if we change the forward cone. 

We will study the minimal genus problem for a cohomology class $A$ with non-negative square.
Since $A$ and $-A$ have the same minimal genus,  we can assume that $A$
 lies in  the closure of the forward cone $\mathcal C$.

We recall the symmetry and the blow up formulas for ${\bf SW}_{\pm}$. 

\begin{lemma}  \label{formulae}

Let $X$ be a smooth, closed, connected, oriented 4-manifold with $b^+=1$, and $\mathcal{L}$  a spin$^c$ structure on $X$ with formal dimension $k(\mathcal{L})\geq 0$. 
Let  $E$ be the exceptional class in $H^2(X\sharp \overline{\C\PP}^2;\Z)$. The forward cone $\tilde{\mathcal{C}}$ of $X\sharp \overline{\C\PP}^2$ is chosen such that $\mathcal{C}\subset \tilde{\mathcal{C}}$. Then we have

\begin{itemize}

\item Complex conjugation  (\cite{W},\cite{Mor}) 
\begin{equation}\label {symmetry} 
{\bf SW}_+(X,\mathcal{L})( \gamma_1 \wedge\cdots \wedge \gamma_{p})=(-1)^{j(p)}  {\bf SW}_-(X, -\mathcal{L})(\gamma_1 \wedge\cdots \wedge \gamma_{p})
\end{equation}
where $j(p)=1 +\frac{1}{2}(p-b_1)$.

\item  Blow-up formula (\cite{FS}, \cite{LiLiu3})
\begin{equation}\label{special blowup} 
 {\bf SW}_{\pm}(X\sharp\overline{\C\PP}^2,\mathcal{L}\pm E)={\bf SW}_{\pm}(X,\mathcal{L}).
\end{equation}

\end{itemize}
\end{lemma}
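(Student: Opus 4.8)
The plan is to establish the two formulae independently, in each case following the corresponding result in the cited literature while keeping track of the extra structure carried by the $V(X)$-valued invariant.

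For the complex conjugation identity, the starting observation is that conjugating the spinor bundles sends a spin$^c$ structure $\mathcal{L}$ to one with determinant class $-c_1(\mathcal{L})$, namely $-\mathcal{L}$. First I would verify that the involution $(A,\psi)\mapsto(\bar A,\bar\psi)$ carries a solution of the $\delta$-perturbed Seiberg-Witten equations for $\mathcal{L}$ to a solution of the $(-\delta)$-perturbed equations for $-\mathcal{L}$: the Dirac equation is equivariant under conjugation, while $F_A^+$ and the quadratic spinor term both change sign, which forces the sign of the perturbation to flip. This produces a diffeomorphism $\mathcal{M}_{X,g,\delta}(\mathcal{L})\cong\mathcal{M}_{X,g,-\delta}(-\mathcal{L})$. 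Next I would check the effect on chambers: since $(2\pi c_1(-\mathcal{L})-(-\delta))\cdot[\omega_g]=-(2\pi c_1(\mathcal{L})-\delta)\cdot[\omega_g]$, the wall-crossing sign is reversed, so a generic pair $(g,\delta)$ computing ${\bf SW}_+(X,\mathcal{L})$ is sent to one computing ${\bf SW}_-(X,-\mathcal{L})$, which accounts for the interchange of chambers in the statement.

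It remains to pin down the sign $(-1)^{j(p)}$, which is the delicate part. When $p=0$ the identity reduces to the classical charge-conjugation formula ${\bf SW}(X,\mathcal{L})=(-1)^{(\chi+\sigma)/4}{\bf SW}(X,-\mathcal{L})$, and a direct computation gives $(\chi+\sigma)/4=1-b_1/2=j(0)$ when $b^+=1$. For $p>0$ the extra factor comes entirely from the cut-down classes: the universal first Chern class obeys $\mu\mapsto-\mu$ under conjugation, so each of the factors $\phi(\gamma_i)$ and $\phi(x_0)$ reverses sign; comparing the total number $(p+k(\mathcal{L}))/2$ of such factors with its value $k(\mathcal{L})/2$ at $p=0$ contributes precisely an additional $(-1)^{p/2}$, which upgrades $j(0)$ to $j(p)=1+\tfrac12(p-b_1)$. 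The careful bookkeeping of these orientation and insertion signs is the main point requiring attention here.

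For the blow-up formula I would argue by neck-stretching on $X\#\overline{\C\PP}^2$. First note that the formal dimension is unchanged: blowing up alters $c_1^2$ by $-1$ and $2\chi+3\sigma$ by $-1$, so $k(\mathcal{L}\pm E)=k(\mathcal{L})$. Stretching the neck around the connected-sum sphere, solutions converge to a solution on $X$ together with a finite-energy solution on the cylindrical-end $\overline{\C\PP}^2$; for the spin$^c$ structure restricting to $\pm E$ on the $\overline{\C\PP}^2$ summand there is a unique such solution, and gluing yields an orientation-respecting diffeomorphism $\mathcal{M}(X\#\overline{\C\PP}^2,\mathcal{L}\pm E)\cong\mathcal{M}(X,\mathcal{L})$. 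The choice $\mathcal{C}\subset\tilde{\mathcal{C}}$ together with the matching sign on $E$ keeps the computation in the same chamber, so no wall-crossing discrepancy arises and the signs $\pm$ agree on both sides. To obtain the statement for the full $V(X)$-valued invariant, observe that $\overline{\C\PP}^2$ is simply connected, so $H^1(X\#\overline{\C\PP}^2;\Z)=H^1(X;\Z)$ and the classes $\gamma_i$ and the point $x_0$ may be taken in $X$ away from the exceptional region; under the gluing identification the universal line bundle, and hence each $\phi(\gamma_i)$ and $\phi(x_0)$, pulls back to its counterpart on $\mathcal{M}(X,\mathcal{L})$, so the same cup-product expression evaluates identically on the two moduli spaces. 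Here the principal obstacle is the analytic gluing and transversality underlying the neck-stretching identification; granting those, the passage to the $\phi$-classes is formal.
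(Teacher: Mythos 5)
Your proposal is correct in substance and follows essentially the same route as the paper: for the blow-up formula the paper gives no independent proof either, deferring the gluing analysis to Fintushel--Stern and Li--Liu exactly as you do (with the same remark that $H^1$ and the $\phi$-classes are untouched by the exceptional region); and for the conjugation formula the paper, like you, reduces to Morgan's Corollary 6.8.4 and tracks two contributions, the orientation reversal of the moduli space and the sign flips caused by $\mu\mapsto-\mu$ on the insertions.

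The one point to repair is your sign bookkeeping. You extract the orientation contribution by anchoring at $p=0$, where the exponent $(\chi+\sigma)/4=1-b_1/2=j(0)$ and your correction $p/2$ are integers only when $b_1$ (hence $p$) is even. When $b_1$ is odd, $k(\mathcal{L})\equiv b_1\equiv 1\pmod 2$, so the $p=0$ invariant vanishes identically by the parity convention; the anchor then carries no information, and the claimed extra factor $(-1)^{p/2}$ is not even defined. The paper avoids this by computing the total sign in one step as $(-1)^{\delta+(k(\mathcal{L})+p)/2}$, where $\delta=1+b_1+b^++\mathrm{ind}_{\C}(D_A)$ is the orientation change and $(k(\mathcal{L})+p)/2$ is the total number of sign-flipping insertions; both exponents are integers whenever the invariant can be nonzero, and reducing mod $2$ with $b^+=1$ gives $j(p)$ uniformly in all parities. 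Your argument becomes complete once you replace the $p=0$ anchoring by this direct count -- the mechanism you identified (orientation sign times one flip per $\phi$-factor) is exactly the right one.
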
 

We explain \eqref{symmetry} briefly since it is only slightly different from Corollary 6.8.4 in \cite{Mor}. Looking at the definition of ${\bf SW}$ above and the proof of Corollary 6.8.4 in \cite{Mor}, 
we infer that the total effect of complex conjugation on the invariant is $(-1)^{\delta +\frac{k(\mathcal{L}) +p}{2}}$, where $\delta=1+b_1 +b^+ +\text{ind}_\C (D_A)$  counts the change of orientation of moduli space, and $\frac 12 (k(\mathcal{L})+p) =\frac12 \{ (b_1 -1-b^+) +2 \text{ind}_\C (D_A) +p \}$ counts the number of changes of $\phi \to -\phi$, since complex conjugation changes the first Chern class of the universal line bundle $\mu\to -\mu$. 
So the total effect is $(-1)^{j(p)}$, where we have used $b^+ =1$.

\subsubsection{Two types of invariants}
For type I adjunction classes, we need the following version of invariants,
$${\bf SW}_{\pm} (X,\mathcal{L}) ( \gamma_1 \wedge\cdots \wedge \gamma_{b_1})$$
where  
$$\{\gamma_1,\gamma_2,\dots, \gamma_{b_1}\}$$ is an integral basis for $H_1(X;\Z)/Tor$ compatible with the orientation of $H^1(X;\R)$.
This invariant is defined if $k(\mathcal L)\geq  b_1(X)$.  Let $\mathcal{L}$ be a spin$^c$ structure with $c_1 (\mathcal{L})=c$. Observe that 
$$k(\mathcal{L})-b_1=\frac{c^2-\sigma(X)}{4}-2$$ where we have used $b^+=1$. 
$k(\mathcal{L})\geq b_1$ is equivalent to $c^2-\sigma(X)\geq 8$, or $c^2>\sigma(X)$ since $c^2\equiv \sigma(X) \ (mod\ 8)$.

For type II adjunction classes, we need the following version of invariants:
$${\bf SW}_{\pm} (X, \mathcal {L})( \gamma_{\tilde b_1 +1} \wedge\cdots \wedge \gamma_{b_1})$$
where the set of homology classes
$$\{\gamma_{\tilde b_1 +1},\dots, \gamma_{b_1}\} \subset H_1(X; \Z)/Tor$$ is chosen as follows. Take an integral basis $\{ x_1,x_2,\cdots, x_{b_1} \}$ for $H^1(X;\Z)/Tor$ as in the proof of Lemma~\ref{Lef} such that $T$ is non-degenerate on the linear span of $\{ x_1,x_2,\cdots, x_{\tilde b_1} \}$ and $\{ x_{\tilde b_1 +1},\cdots, x_{b_1} \}$ is a basis for $\ker T$. 
We further assume that this basis is compatible with the orientation of $H^1(X;\R)$. 

Let $\{ \gamma_1,\gamma_2,\dots, \gamma_{b_1} \}$ be the dual basis for $H_1(X; \Z)/Tor$, i.e. $\langle x_i,\gamma_j\rangle=\delta_{ij}$. 
This version of invariant is defined if $k(\mathcal L)\geq b_1(X)- \tilde b_1(X)$.
Similarly observe that   $$k(\mathcal{L})-b_1 +\tilde b_1=\frac{1}{4}(c^2-2\tilde \chi-3\sigma).$$

When $b_1=\tilde b_1$, the invariants are  simply $SW_{\pm} (X, \mathcal {L})$.

The wall crossing numbers for these two types of invariants are given below.  
\begin{equation}  \label{special wallcross0} 
{\bf SW}_+(X,\mathcal{L})( \gamma_1 \wedge\cdots \wedge \gamma_{b_1})-{\bf SW}_-(X,\mathcal{L})(\gamma_1 \wedge\cdots \wedge \gamma_{b_1})= 1.
\end{equation}

\begin{align}  \label{special wallcross1} 
& {\bf SW}_+(X,\mathcal{L})( \gamma_{\tilde b_1 +1} \wedge\cdots \wedge \gamma_{b_1})-{\bf SW}_-(X,\mathcal{L})(\gamma_{\tilde b_1 +1} \wedge\cdots \wedge \gamma_{b_1})   \\
 &=k_1 \langle      \gamma_{\tilde b_1 +1} \wedge\cdots \wedge \gamma_{b_1} \wedge u_c^{\frac{\tilde b_1}{2}}, x_1 \wedge x_2 \wedge \cdots \wedge x_{b_1} \rangle \notag \\
 &=k_2 (c_1 (\mathcal{L})\cdot F)^{\frac{\tilde b_1}{2}} \notag
\end{align}
where $k_1=\pm \frac{1}{(\tilde b_1 /2)!}$, $k_2$ is a nonzero constant, $u_c =\frac12 \sum_{1\leq i <j\leq b_1} c_{ij} \gamma_i \wedge \gamma_j$, $c_{ij}=\langle c_1(\mathcal{L})\cup x_i \cup x_j, [X] \rangle$, and $F$ is a generator for Im$T$.   Formula \eqref{special wallcross0} follows from 
 \cite{KM,LiLiu1},
and  the first equality in \eqref{special wallcross1}  follows from Theorem 16 in \cite{OT}.  
The second equality in \eqref{special wallcross1}   follows from  direct computation. Assume $x_i x_j=d_{ij}F$. Then $c_{ij}=d_{ij} c_1 (\mathcal{L})\cdot F$. We have $k_2 \not=0$ since  $T$ is non-degederate on the linear span of $\{ x_1,x_2,\cdots, x_{\tilde b_1} \}$.

In light of the SW dimension calculation above
and wall crossing formulas \eqref{special wallcross0}  and \eqref{special wallcross1},      Theorem \ref{main}  follows from the following more general  result which involves the wall crossing value of the full ${\bf SW}$. 

\begin{theorem}\label{refined}
Let $X$ be a smooth, closed, connected, oriented 4-manifold with $b^+(X)=1$, and $\Sigma \subset X$ a connected, smooth, embedded surface with $\Sigma\cdot\Sigma \geq 0$. Assume $\mathcal{L}$ is a spin$^c$ structure such that its SW dimension $k(\mathcal L)\geq 0$, and the wall crossing value $${\bf SW}_+(X,\mathcal{L})-
{\bf SW}_-(X,\mathcal{L})$$ is nonzero (in $\wedge^* H^1(X;\Z)/Tor$).
Then
$$2g(\Sigma)-2\geq \Sigma\cdot\Sigma-|\langle c_1(\mathcal{L}), [\Sigma] \rangle|.$$
\end{theorem}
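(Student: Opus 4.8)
The plan is to first reduce Theorem~\ref{refined} to a one-sided, signed inequality, and then to prove that inequality by a localized curvature estimate for a solution produced in a single chamber. After replacing $\Sigma$ by a surface representing $-[\Sigma]$ if necessary (which leaves $g(\Sigma)$ unchanged), I may assume $[\Sigma]$ lies in the closure $\overline{\mathcal{C}}$ of the forward cone. Since the wall crossing value $\mathbf{SW}_+(X,\mathcal{L})-\mathbf{SW}_-(X,\mathcal{L})$ is nonzero, at least one of $\mathbf{SW}_\pm(X,\mathcal{L})$ is nonzero. I claim it is enough to prove the following one-sided statement: \emph{if $[\Sigma]\in\overline{\mathcal{C}}$ and $\mathbf{SW}_-(X,\mathcal{L})\neq 0$, then $2g(\Sigma)-2\geq \Sigma\cdot\Sigma-\langle c_1(\mathcal{L}),[\Sigma]\rangle$.} Indeed, if $\mathbf{SW}_-(X,\mathcal{L})\neq 0$ the theorem follows at once from $-\langle c_1(\mathcal{L}),[\Sigma]\rangle\geq-|\langle c_1(\mathcal{L}),[\Sigma]\rangle|$; and if instead $\mathbf{SW}_+(X,\mathcal{L})\neq 0$, then the complex conjugation symmetry \eqref{symmetry} gives $\mathbf{SW}_-(X,-\mathcal{L})\neq 0$, so applying the one-sided statement to $-\mathcal{L}$ yields $2g(\Sigma)-2\geq \Sigma\cdot\Sigma+\langle c_1(\mathcal{L}),[\Sigma]\rangle\geq \Sigma\cdot\Sigma-|\langle c_1(\mathcal{L}),[\Sigma]\rangle|$, since $c_1(-\mathcal{L})=-c_1(\mathcal{L})$. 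Thus the absolute value is produced precisely by playing $\mathcal{L}$ against $-\mathcal{L}$, which also accounts for the remark that it cannot be removed.

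To prove the one-sided statement I would work with a metric $g$ adapted to $\Sigma$. Assuming first $\Sigma\cdot\Sigma>0$, I choose $g$ so that the period point $\omega_g\in\mathcal{C}$ lies along the ray through $[\Sigma]$, and so that a tubular neighborhood $N(\Sigma)$ is a Riemannian product of the constant-curvature metric on $\Sigma$ with a flat normal disk; on $N(\Sigma)$ the ambient scalar curvature is then the intrinsic scalar curvature of $\Sigma$. I take the self-dual perturbation $\delta$ supported in the complement of $N(\Sigma)$, so that the Seiberg--Witten equations are genuinely unperturbed on $N(\Sigma)$, while arranging the single scalar condition $(2\pi c_1(\mathcal{L})-\delta)\cdot\omega_g<0$ so as to sit in the $-$ chamber. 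Since $\mathbf{SW}_-(X,\mathcal{L})\neq 0$ and the invariant is constant throughout the $-$ chamber, for this $(g,\delta)$ there is an irreducible solution $(A,\psi)$.

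The heart of the matter is a curvature estimate localized on $N(\Sigma)$. Let $\tau$ be a Thom form of $\Sigma$ supported in $N(\Sigma)$, so that $2\pi\langle c_1(\mathcal{L}),[\Sigma]\rangle=i\int_X F_A\wedge\tau$; splitting $\tau=\tau^++\tau^-$ and dropping the vanishing cross terms gives $2\pi\langle c_1(\mathcal{L}),[\Sigma]\rangle=i\langle F_A^+,\tau^+\rangle-i\langle F_A^-,\tau^-\rangle$. On $N(\Sigma)$ the second equation reads $F_A^+=\psi\otimes\psi^*-\frac{|\psi|^2}{2}\,\mathrm{Id}$ with no $\delta$ term, so $|F_A^+|$ is pointwise controlled by $|\psi|^2$; pairing the Weitzenb\"ock--Lichnerowicz identity with a cutoff supported in $N(\Sigma)$ and using that the scalar curvature there is intrinsic, Gauss--Bonnet bounds $\int_{N(\Sigma)}|\psi|^2$, hence $\|F_A^+\|_{L^2(N(\Sigma))}$, by a universal multiple of $\max(0,\,2g(\Sigma)-2)$. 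The remaining anti-self-dual pairing is controlled by the Euler number of the normal bundle and supplies the term $\Sigma\cdot\Sigma$. Combining the two contributions gives $\langle c_1(\mathcal{L}),[\Sigma]\rangle\geq \Sigma\cdot\Sigma-(2g(\Sigma)-2)$, which is the one-sided statement.

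The main obstacle is exactly this localized estimate. In the $-$ chamber the perturbation $\delta$ is typically large, and with it $\|\psi\|_{L^2(X)}$ is large; the whole point is to bound $\psi$ \emph{near} $\Sigma$ independently of $\delta$, which is possible only because $\delta$ vanishes on $N(\Sigma)$ and the adapted metric has controlled (nonpositive, for $g\geq 1$) scalar curvature there. Correctly extracting the $\Sigma\cdot\Sigma$ term from the anti-self-dual curvature, while keeping the self-dual contribution tied to the genus, is the delicate bookkeeping. Finally, the degenerate cases require separate care: when $g(\Sigma)=0$ the intrinsic curvature of $\Sigma$ is positive, so the Gauss--Bonnet term flips sign and forces $\psi$ to vanish near $\Sigma$ (this is the sharp case, e.g. a line in $\C\PP^2$); and when $\Sigma\cdot\Sigma=0$ the adapted choice $\omega_g$ parallel to $[\Sigma]$ degenerates and must be replaced by a limiting argument with $\omega_g$ approaching the boundary of $\mathcal{C}$.
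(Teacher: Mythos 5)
Your opening reduction is fine: passing to the one-sided inequality and recovering the absolute value by playing $\mathcal{L}$ against $-\mathcal{L}$ via the conjugation symmetry \eqref{symmetry} is sound, and is in the same spirit as what the paper does. The gap is in the core analytic step. First, a technical error: for $\Sigma\cdot\Sigma>0$ the tubular neighborhood $N(\Sigma)$ is the disk bundle of Euler number $\Sigma\cdot\Sigma\neq 0$, which is not a product $\Sigma\times D$; no Riemannian product metric exists there, and the ambient scalar curvature is not the intrinsic curvature of $\Sigma$ (the nonintegrable horizontal distribution contributes curvature terms of the size of the bundle curvature). Second, and fatally: to force the $-$ chamber you must take $\|\delta\|$ large, and while $F_A^+$ is pointwise controlled by $|\psi|^2$ where $\delta$ vanishes, the anti-self-dual part $F_A^-$ is not constrained by the Seiberg--Witten equations at all, locally or otherwise; its only a priori bound is global and topological, $\|F_A^-\|_{L^2(X)}^2=\|F_A^+\|_{L^2(X)}^2+4\pi^2c_1(\mathcal{L})^2$, which blows up with $\delta$. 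So the term $i\int_X F_A^-\wedge\tau^-$, which your bookkeeping must identify with $\Sigma\cdot\Sigma$ up to controlled error, is simply unbounded in the regime your chamber choice requires, and no cutoff can localize it. (The cutoff boundary terms in your local Weitzenb\"ock estimate are likewise uncontrolled at fixed neck length; killing them is exactly what neck-stretching is for.)

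This tension between choosing the chamber and keeping estimates is the crux of the $b^+=1$ problem, and the paper resolves it the opposite way: one does not choose the chamber by perturbation; rather, the long-neck metric adapted to the surface dictates the chamber, because the limiting self-dual form satisfies $[\omega_{LN}]\propto \mathrm{PD}[\Sigma]$ (Lemma \ref{km}, after \cite{KM}). To make whichever of $\mathbf{SW}_\pm(X,\mathcal{L})$ is nonzero usable in that geometrically-determined chamber, the paper blows up at $n=\Sigma\cdot\Sigma$ points, reducing to a square-zero surface $\tilde\Sigma$, and considers \emph{two} spin$^c$ structures $\mathcal{L}_1=\mathcal{L}+\sum E_i$ and $\mathcal{L}_2=\mathcal{L}-\sum E_i$: by the blow-up formula (Lemma \ref{formulae}) both have the same $\mathbf{SW}_\pm$ as $\mathcal{L}$, while $\langle c_1(\mathcal{L}_1),[\tilde\Sigma]\rangle>0$ and $\langle c_1(\mathcal{L}_2),[\tilde\Sigma]\rangle<0$, so one of them has its nonzero invariant in the long-neck chamber; the $\Sigma\cdot\Sigma$ term then appears algebraically through $\langle c_1(\mathcal{L})\pm\sum E_i,[\Sigma]-\sum \mathrm{PD}(E_i)\rangle$, with no estimate on $F_A^-$ ever needed (Proposition \ref{bound}). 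Finally, your sketch does not actually handle spheres: for $g=0$ the theorem asserts $|\langle c_1(\mathcal{L}),[\Sigma]\rangle|\geq\Sigma\cdot\Sigma+2$, and "$\psi$ vanishes near $\Sigma$" yields no such conclusion; the paper needs two separate devices here, namely the Morgan--Szab\'o--Taubes vanishing result (Lemma \ref{MST}, \cite{MST96}) for the case $\Sigma\cdot\Sigma=|\langle c_1(\mathcal{L}),[\Sigma]\rangle|$, and the attach-a-handle (sphere-to-torus) trick to rule out spheres when the deficit is at least $2$.
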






For convenience, we abbreviate $c_1(\mathcal{L})$ to $c$. 

\subsection{The case $\Sigma\cdot \Sigma\leq   |\langle c, [\Sigma] \rangle|$} \label{3.2}
Notice that $\Sigma\cdot \Sigma-|\langle c, [\Sigma] \rangle|$ is an even number since $c$ is a characteristic class. 
So if   $\Sigma\cdot \Sigma-|\langle c, [\Sigma]\rangle| < 0$,  $\Sigma\cdot \Sigma-|\langle c\cdot [\Sigma] \rangle|\leq -2$ and the adjunction inequality holds trivially. 

In the case $\Sigma\cdot \Sigma=|\langle c, [\Sigma]\rangle|$, we use the following result of Morgan-Szab\'o-Taubes (it is stated for $SW$ but certainly works for the full ${\bf SW}$): 

\begin{lemma}[\cite{MST96}, Lemma 10.2] \label{MST}
Let $X$ be a smooth, closed, connected, oriented 4-manifold with $b^+=1$, and $\Sigma \subset X$ a smooth embedded sphere with $\Sigma\cdot\Sigma=0$ and $[\Sigma]$ of infinite order.
Let $\mathcal L$ be a spin$^c$ structure and use the forward cone $\mathcal C$ of $[\Sigma]$ to define ${\bf SW}_{\pm}(X,\mathcal{L})$. Then
$${\bf SW}_-(X, \mathcal L)=0$$
whenever  $\langle c_1(\mathcal L), [\Sigma]\rangle=0$. 
\end{lemma}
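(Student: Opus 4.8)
The plan is to analyze the wall-crossing behavior of the Seiberg–Witten invariant directly, exploiting the hypothesis $\langle c_1(\mathcal{L}), [\Sigma]\rangle = 0$ together with $\Sigma \cdot \Sigma = 0$. Since $[\Sigma]$ is a class of non-negative (in fact zero) square and infinite order, it lies in the closure of the forward cone $\mathcal{C}$, and by hypothesis the chamber structure for $\mathbf{SW}_\pm$ is taken with respect to $[\Sigma]$. The key observation is that the sign of the defining quantity $(2\pi c_1(\mathcal{L}) - \delta)\cdot[\omega_g]$ is governed, as the metric degenerates, by the pairing $c_1(\mathcal{L})\cdot[\Sigma]$; when this pairing vanishes, I expect that one can push $[\omega_g]$ arbitrarily close to the ray spanned by $[\Sigma]$ (or to the wall it determines) while staying in the minus chamber, so that $\mathbf{SW}_-$ is computed in a limit where the relevant moduli space can be shown to be empty.

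First I would set up a neck-stretching argument. Because $\Sigma$ is a sphere with $\Sigma\cdot\Sigma = 0$, a tubular neighborhood of $\Sigma$ is $S^2 \times D^2$, and one can choose a family of metrics $g_n$ that stretch the neck $S^2 \times S^1$ separating this neighborhood from its complement. The central step is to show that for such stretched metrics, the self-dual harmonic form $\omega_{g_n}$ limits to a class proportional to the Poincaré dual of $[\Sigma]$ (up to the forward-cone normalization). Then, since $\langle c_1(\mathcal{L}), [\Sigma]\rangle = 0$ by assumption, the quantity $2\pi c_1(\mathcal{L})\cdot[\omega_{g_n}]$ tends to zero, and a sufficiently small negative perturbation $\delta$ places us in the minus chamber, i.e. these stretched metrics compute $\mathbf{SW}_-$.

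Next I would argue that on this degenerating family the Seiberg–Witten moduli space is empty for large $n$, which forces $\mathbf{SW}_- = 0$. The standard mechanism is an a priori $C^0$ bound on the spinor $\psi$ coming from the Weitzenböck formula, combined with the fact that stretching the $S^2\times S^1$ neck drives the relevant curvature energy to concentrate in a way incompatible with a nonzero solution in the minus chamber; equivalently, one shows any solution would have to be reducible, but reducibles are excluded in the interior of a chamber. This is essentially the content of the cited result of Morgan–Szabó–Taubes, so I would follow their neck-stretching estimate, merely checking that it applies verbatim to the $\mathbf{SW}$-valued (rather than scalar $SW$) invariant, which it does since the argument takes place at the level of the moduli space $\mathcal{M}_{X,g,\delta}(\mathcal{L})$ and its emptiness kills all the slant-product evaluations simultaneously.

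The main obstacle, I expect, is making precise the claim that the stretched metrics genuinely lie in the minus chamber rather than on the wall itself. Because $\Sigma\cdot\Sigma = 0$, the class $[\Sigma]$ sits on the boundary of the forward cone, so $[\omega_{g_n}]$ approaches the wall $c_1(\mathcal{L})^\perp$ tangentially, and I must verify the sign of $(2\pi c_1(\mathcal{L}) - \delta)\cdot[\omega_{g_n}]$ is strictly negative for an appropriate choice of $\delta$ with $n$ large; this requires a careful comparison between the rate at which $2\pi c_1(\mathcal{L})\cdot[\omega_{g_n}]$ decays and the magnitude of the perturbation. Since this delicate chamber-analysis is exactly what is carried out in Lemma 10.2 of \cite{MST96}, and the statement I need is quoted there for the present setting, I would cite that lemma for the conclusion and confine my own verification to the observation that the hypotheses ($b^+=1$, $\Sigma$ an embedded sphere of zero square and infinite order, $\langle c_1(\mathcal{L}),[\Sigma]\rangle = 0$) match precisely.
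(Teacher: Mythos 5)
Your proposal is correct and takes essentially the same approach as the paper: the paper does not reprove this statement but quotes it directly as Lemma 10.2 of \cite{MST96}, adding only the remark that, although stated there for the scalar invariant $SW$, it holds for the full $V(X)$-valued ${\bf SW}$ since emptiness of the moduli space kills all the slant-product evaluations at once. Your concluding step --- citing the MST lemma after matching hypotheses and noting the extension to ${\bf SW}$ --- is exactly what the paper does, and your neck-stretching sketch is simply an exposition of the cited result's mechanism rather than a divergence from it.
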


Suppose that $\Sigma\cdot \Sigma >0$ and $\langle c, [\Sigma]\rangle =\Sigma\cdot \Sigma$. Blowup $X$ at $n=\Sigma\cdot \Sigma$ points. Let $\tilde X$ be the resulting manifold, $\tilde{\Sigma}$ the resulting surface, which has the same genus as $\Sigma$, and $E_1,\dots,E_n$ the exceptional classes in $H^2(\tilde X;\Z)$. Then $[\tilde \Sigma]=[\Sigma]-\sum \text{PD}(E_i)$. Consider the class
$$\tilde c=c-\sum E_i.$$
$\tilde c$ is still characteristic and has the same SW dimension as $c$. Moreover,  $\tilde \Sigma\cdot \tilde \Sigma=\langle \tilde c, [\tilde \Sigma] \rangle =0$. 
Thus we are reduced to the case that  $$\Sigma\cdot \Sigma=0, \quad \langle c, [\Sigma]\rangle =0.$$

In this case, we will show that $\Sigma$ is not a sphere by applying Lemma~\ref{MST} to both $c$ and $-c$. 

If $\Sigma$ is a sphere and $[\Sigma]$ is of infinite order, by  Lemma \ref{MST} applied to $\pm c$, ${\bf SW}_-(X, \pm c)=0$.

By the assumption on the non-trivial wall crossing  for $c$, there exist $\gamma_1, \cdots, \gamma_p\in H_1(X;\Z)$ such that
 ${\bf SW}_+(X, c)(\gamma_1 \wedge \cdots \wedge \gamma_{p})\ne 0$. 

By formula \eqref{symmetry} in Lemma~\ref{formulae}, 
$${\bf SW}_-(X,  -c)(\gamma_1 \wedge \cdots \wedge \gamma_{p})=\pm {\bf SW}_+(X, c)(\gamma_1 \wedge \cdots \wedge \gamma_{p}) \ne 0.$$  This contradicts  ${\bf SW}_-(X, -c)=0$.

\subsection{$\Sigma\cdot \Sigma>   |\langle c, [\Sigma]\rangle|$}

Since $c$ is characteristic, we actually have  $\Sigma\cdot \Sigma - |\langle c, [\Sigma]\rangle| \geq 2$ in this case.

\begin{prop}\label{bound}
Let $X$ be a smooth, closed, connected, oriented 4-manifold with $b^+=1$, and $\Sigma \subset X$ a connected smooth embedded surface with 
$$\Sigma\cdot\Sigma >0 \hbox{ and }   g(\Sigma)>0.$$ 
 Suppose  $\mathcal{L}$ is a spin$^c$ structure such that its SW dimension $k(\mathcal L)\geq 0$, the wall crossing value ${\bf SW}_+(X,\mathcal{L})-
{\bf SW}_-(X,\mathcal{L})$ is nonzero (in $\wedge^* H^1(X;\Z)/Tor$),  and 
$$|\langle c_1(\mathcal{L}), [\Sigma]\rangle |< \Sigma\cdot\Sigma.$$
Then
$$2g(\Sigma)-2\geq \Sigma\cdot\Sigma-|\langle c_1(\mathcal{L}), [\Sigma]\rangle|.$$
\end{prop}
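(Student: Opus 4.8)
The plan is to adapt the classical Seiberg-Witten adjunction argument of Kronheimer-Mrowka and Morgan-Szab\'o-Taubes to the $b^+=1$ setting, where the nonvanishing of the wall-crossing value plays the role that the nonvanishing of the Seiberg-Witten invariant plays when $b^+\geq 2$. The starting observation is that under the hypothesis $|\langle c,[\Sigma]\rangle| < \Sigma\cdot\Sigma$ with $g(\Sigma)>0$, we may (after replacing $c$ by $-c$, which only switches ${\bf SW}_+$ and ${\bf SW}_-$ and hence preserves the nonvanishing of the wall-crossing difference) arrange that $\langle c,[\Sigma]\rangle$ has a definite sign, and the inequality we must prove is equivalent to $2g(\Sigma)-2 \geq \Sigma\cdot\Sigma - \langle c,[\Sigma]\rangle$.

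First I would fix a metric $g$ and perturbation $\delta$ realizing one chamber, say with $(2\pi c -\delta)\cdot[\omega_g]>0$, so that the relevant moduli space $\mathcal{M}_{X,g,\delta}(\mathcal{L})$ computes ${\bf SW}_+(X,\mathcal{L})$. The engine of the argument is a neck-stretching/rescaling analysis along $\Sigma$: one introduces a family of metrics (or conformal rescalings) that blows up the normal disk bundle of $\Sigma$, and one studies how solutions of the Seiberg-Witten equations behave as the geometry degenerates. The key analytic input, following \cite{KM} and \cite{MST96}, is a curvature/energy estimate: on a surface of positive genus one obtains a lower bound for $\int_\Sigma F_A$ in terms of the topology, and comparing this with the constraint coming from $\langle c,[\Sigma]\rangle$ forces the adjunction inequality. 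Concretely, if the inequality $2g(\Sigma)-2 < \Sigma\cdot\Sigma-\langle c,[\Sigma]\rangle$ held, the rescaling analysis would show that for a suitable metric supported near $\Sigma$ there are \emph{no} irreducible solutions in the chamber, and similarly none in the opposite chamber, which would force the wall-crossing value to vanish. This contradicts the hypothesis that ${\bf SW}_+(X,\mathcal{L})-{\bf SW}_-(X,\mathcal{L})\neq 0$.

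The slightly subtle point in the $b^+=1$ case, as opposed to $b^+\geq 2$, is that vanishing of the invariant in a single chamber is \emph{not} enough: the reducible solution on the wall means ${\bf SW}_+$ and ${\bf SW}_-$ differ, and either one alone may be nonzero for trivial wall-crossing reasons. I would therefore carry out the vanishing argument in \emph{both} chambers simultaneously. The rescaled metric concentrated near $\Sigma$ is, for large rescaling parameter, generic in neither chamber a priori, so I would combine the neck-stretching estimate with the observation that for the concentrated metric the class $c_1(\mathcal{L})$ and the period point $[\omega_g]$ can be controlled: the geometric constraint from $\Sigma$ kills irreducibles regardless of which side of the wall we sit on, giving ${\bf SW}_+(X,\mathcal{L})=0={\bf SW}_-(X,\mathcal{L})$ and hence the desired contradiction.

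The hard part will be the analytic estimate on the concentrated moduli space, i.e. showing that when $2g(\Sigma)-2 < \Sigma\cdot\Sigma-\langle c,[\Sigma]\rangle$ no irreducible solutions persist for the rescaled metric. This is where the positive genus hypothesis is essential (for $g=0$ one needs instead the separate argument of Subsection \ref{3.2} via Lemma \ref{MST}), and where one must quote or reprove the Kronheimer-Mrowka curvature bound relating $\int_\Sigma F_A$ to $2g(\Sigma)-2$. I expect the full $V(X)$-valued statement (rather than just $SW$) to require only a formal upgrade, since the vanishing of the moduli space automatically kills every component of ${\bf SW}$ for every choice of $\gamma_1\wedge\cdots\wedge\gamma_p$; the genuinely new content over the $b^+\geq 2$ case is entirely the two-chamber bookkeeping described above.
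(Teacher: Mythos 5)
Your overall strategy --- use the nonvanishing of the wall-crossing difference to conclude that at least one chamber invariant is nonzero, then run a Kronheimer--Mrowka type neck-stretching estimate --- starts in the right place, but the step you call ``two-chamber bookkeeping'' is a genuine gap, and it sits exactly where the real difficulty of the $b^+=1$ case lies. A long-neck metric $g_R$ concentrated near $\Sigma$, together with a small perturbation $\delta$, lies on one \emph{definite} side of the wall of $c_1(\mathcal L)$: as $R\to\infty$ the period point $[\omega_{g_R}]$ converges to a positive multiple of ${\rm PD}([\Sigma])$ (this is the light cone lemma argument inside Lemma \ref{km}), so the sign of $(2\pi c_1(\mathcal L)-\delta)\cdot[\omega_{g_R}]$ is forced, for $R\gg 0$ and $\delta$ small, to agree with the sign of $\langle c_1(\mathcal L),[\Sigma]\rangle$. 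The curvature/Weitzenb\"ock estimates that ``kill irreducibles'' are valid only for such metrics with small perturbations; to compute the invariant of the \emph{other} chamber you would need either a metric not concentrated near $\Sigma$ or a large perturbation, and in either case the estimate is lost. So your argument can at best show that \emph{one} of ${\bf SW}_\pm(X,\mathcal L)$ vanishes, which is perfectly consistent with the hypothesis (a nonzero wall-crossing difference allows one of the two invariants to vanish), and no contradiction results.

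The paper resolves this not by a vanishing argument but by a direct one, and the idea you are missing is to move the chamber ambiguity into a choice of spin$^c$ structure, using the sign freedom in the blow-up formula. Blow up at $n=\Sigma\cdot\Sigma$ points on $\Sigma$; this also reduces to a square-zero surface $\tilde\Sigma$, which is what the cylindrical-neck estimate of Lemma \ref{km} actually requires (your proposal stretches the neck around a nontrivial circle bundle, where the product-metric estimates of \cite{KM} do not directly apply). On $X\sharp n\overline{\C\PP}^2$ consider the two spin$^c$ structures with $c_1(\mathcal L_1)=c_1(\mathcal L)+\sum E_i$ and $c_1(\mathcal L_2)=c_1(\mathcal L)-\sum E_i$. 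The strict hypothesis $|\langle c_1(\mathcal L),[\Sigma]\rangle|<\Sigma\cdot\Sigma$ gives $\langle c_1(\mathcal L_1),[\tilde\Sigma]\rangle>0$ and $\langle c_1(\mathcal L_2),[\tilde\Sigma]\rangle<0$, so the long-neck chamber of $\tilde\Sigma$ is the \emph{positive} chamber for $\mathcal L_1$ and the \emph{negative} chamber for $\mathcal L_2$, while the blow-up formula \eqref{special blowup} gives ${\bf SW}_{\pm}(X\sharp n\overline{\C\PP}^2,\mathcal L_i)={\bf SW}_{\pm}(X,\mathcal L)$. Since at least one of ${\bf SW}_{\pm}(X,\mathcal L)$ is nonzero, Lemma \ref{km} applies either to $\mathcal L_1$ (if ${\bf SW}_+\neq 0$) or to $\mathcal L_2$ (if ${\bf SW}_-\neq 0$), and in either case yields $2g(\Sigma)-2\geq |\langle c_1(\mathcal L),[\Sigma]\rangle\pm\Sigma\cdot\Sigma|\geq \Sigma\cdot\Sigma-|\langle c_1(\mathcal L),[\Sigma]\rangle|$. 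Without this device, or something equivalent to it, your proposal does not close.
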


\begin{proof} [Proof of Theorem \ref{refined} assuming Proposition \ref{bound}]

The case $\Sigma\cdot \Sigma \leq |\langle c, [\Sigma]\rangle|$ has been dealt with in Section~\ref{3.2}. 

So we assume that $\Sigma\cdot \Sigma > |\langle c, [\Sigma]\rangle|$ as in Proposition \ref{bound}. 
If we also assume that $\Sigma$ has positive genus, 
then Theorem \ref{refined} follows from 
Proposition~\ref{bound}.

So to prove Theorem \ref{refined}, 
it remains to show that $\Sigma$ cannot be a sphere if  $\Sigma\cdot \Sigma-|\langle c, [\Sigma]\rangle|\geq 2$. 

Assume $\Sigma$ is a sphere and satisfies $\Sigma\cdot \Sigma-|\langle c, [\Sigma]\rangle|\geq 2$. We can attach a trivial  handle to $\Sigma$ and construct a torus $T$, which is homologous to $\Sigma$. Observe that  $2g(T)-2=0$, which is less than $T \cdot T-|\langle c, [T]\rangle |\geq 2$.  
So this is impossible  by Proposition \ref{bound}.  

Therefore we do obtain the  adjunction inequality.
\end{proof}

\subsection{Proof of Proposition \ref{bound}}

We start with the following inequality in the case $\Sigma\cdot \Sigma=0$, which
is  essentially the combination of Proposition 8, Lemma 9 and Lemma 10  of  \cite{KM}. 
The arguments are the same with the ordinary SW invariants replaced by the $V(X)$-valued invariants {\bf SW}.

\begin{lemma}\label{km}
Let $X$ be a smooth, closed, connected, oriented 4-manifold with $b^+=1$, and $\Sigma \subset X$ a connected smooth embedded surface with $\Sigma\cdot \Sigma=0$ and 
$g(\Sigma)>0$.
Let $\mathcal L$ be a Spin$^c$ structure and use the forward cone $\mathcal C$ of $[\Sigma]$ to define ${\bf SW}_\pm (X,\mathcal{L})$.

If  $\langle c_1(\mathcal{L}), [\Sigma]\rangle >0$ and ${\bf SW}_+(X,\mathcal{L})\neq 0$,then 
$$2g(\Sigma)-2\geq |\langle c_1(\mathcal{L}), [\Sigma]\rangle|.$$
 The inequality also holds if  $\langle c_1(\mathcal{L}), [\Sigma]\rangle <0$ and ${\bf SW}_-(X,\mathcal{L})\neq 0$. 
\end{lemma}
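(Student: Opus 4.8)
The plan is to follow the neck-stretching and curvature-localization argument of Kronheimer--Mrowka (their Proposition 8 and Lemmas 9, 10), verifying at each step that replacing the ordinary $SW$ by the $V(X)$-valued ${\bf SW}$ changes nothing essential. The starting observation is that nonvanishing of the relevant chamber invariant forces the moduli space to be nonempty: since each component ${\bf SW}_+(X,\mathcal{L})(\gamma_1\wedge\cdots\wedge\gamma_p)$ is computed as a pairing $\langle \phi(\gamma_1)\cup\cdots\cup\phi(x_0)^{m},[\mathcal M_{X,g,\delta}(\mathcal L)]\rangle$, a nonzero value requires $[\mathcal M_{X,g,\delta}(\mathcal L)]\neq 0$, hence $\mathcal M_{X,g,\delta}(\mathcal L)\neq\emptyset$, for \emph{every} generic $(g,\delta)$ lying in the $+$ chamber. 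By the symmetry between the two halves of the statement I would treat only the case $\langle c_1(\mathcal L),[\Sigma]\rangle>0$.

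Next I would set up the geometry. Because $\Sigma\cdot\Sigma=0$, a tubular neighborhood is diffeomorphic to $\Sigma\times D^2$, and I would equip it with a product metric and then stretch the normal (disk) directions, producing a family of metrics $g_n$. The key chamber bookkeeping is that the self-dual harmonic form $\omega_{g_n}$ converges in direction to the Poincar\'e dual of $[\Sigma]$; since $[\Sigma]$ lies on the boundary of the forward cone $\mathcal C$ and $\langle c_1(\mathcal L),[\Sigma]\rangle>0$, the sign of $(2\pi c_1(\mathcal L)-\delta)\cdot[\omega_{g_n}]$ is eventually positive for small $\delta$. Thus, after a small generic perturbation, each $g_n$ (for $n$ large) sits in the $+$ chamber, so by the previous paragraph there is a solution $(A_n,\psi_n)$ of the perturbed Seiberg--Witten equations for every such metric.

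The analytic heart is the a priori estimate on these solutions. From the second Seiberg--Witten equation the self-dual curvature $F_{A_n}^+$ is controlled pointwise by $|\psi_n|^2$ and $\delta$, while the Weitzenb\"ock formula yields the standard bound $|\psi_n|^2\le \max(0,-s_{g_n})$ up to perturbation terms, where $s_{g_n}$ is the scalar curvature. On the stretched region $\Sigma\times D^2$ the self-dual $2$-forms that pair nontrivially with $[\Sigma]$ are the ``diagonal'' forms built from $\mathrm{vol}_\Sigma$, so integrating $iF_{A_n}$ over $\Sigma$ computes $2\pi\langle c_1(\mathcal L),[\Sigma]\rangle$, and in the stretched limit the scalar-curvature term, integrated over $\Sigma$, is governed by Gauss--Bonnet and approaches $-2\pi\chi(\Sigma)=2\pi(2g(\Sigma)-2)$. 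Combining these estimates yields
$$2\pi\langle c_1(\mathcal L),[\Sigma]\rangle\le 2\pi\bigl(2g(\Sigma)-2\bigr),$$
which is the desired inequality; the hypothesis $g(\Sigma)>0$ is exactly what makes the right-hand side nonnegative and the bound nonvacuous.

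I expect the main obstacle to be the interplay between the chamber control and the curvature localization: one must simultaneously guarantee that the stretched metrics $g_n$ remain in the chamber where ${\bf SW}_+\ne 0$ (so that solutions persist) and that, along this same sequence, the curvature of $A_n$ localizes on $\Sigma$ with the scalar-curvature term approaching its Gauss--Bonnet value. Establishing the convergence of $\omega_{g_n}$ toward the $[\Sigma]$-direction and the uniform estimates needed to pass to the limit are the delicate points; once these are in place, the replacement of $SW$ by ${\bf SW}$ is purely formal, since only nonemptiness of $\mathcal M_{X,g,\delta}(\mathcal L)$ --- not the particular cohomological pairing --- is used in the geometric estimate.
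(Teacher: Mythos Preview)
Your proposal is correct and follows essentially the same approach as the paper's proof: both run the Kronheimer--Mrowka neck-stretching argument (their Proposition 8 and Lemmas 9, 10), with the sole observation that nonvanishing of the $V(X)$-valued ${\bf SW}_+$ forces the moduli space to be nonempty for all generic pairs in the $+$ chamber, after which the curvature/Gauss--Bonnet estimate proceeds verbatim. The paper additionally invokes the light cone lemma to pin down the limit $[\omega_{LN}]=\frac{1}{\langle H,[\Sigma]\rangle}\mathrm{PD}([\Sigma])$ precisely, which you should include to make the chamber identification rigorous, but otherwise your outline matches the paper's argument.
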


\begin{proof}
Assume $\langle c_1(\mathcal{L}), [\Sigma]\rangle >0$ and ${\bf SW}_+(X,\mathcal{L})\neq 0$.
Assume $H$ is an integral class in $\mathcal{C}$ with $H^2=1$ and $U$ is a tubular neighborhood of $\Sigma$ diffeomorphic to $\Sigma \times D$, where $D$ is a $2$-dimensional disk. Let $Y=\p U$, which is diffeomorphic to $\Sigma\times S^1$. Choose a metric
$g$ on $X$ such that it has product form $g_\Sigma +d\theta^2 +dt^2$ in a collar $Y\times (-\ep,\ep)$, where $g_\Sigma$ is a metric on $\Sigma$ with constant scalar curvature $-4\pi(2g-2)$ and unit area, $d\theta^2$ is the metric on $S^1$,  and $dt^2$ is the metric on $(-\ep,\ep)$.

Let $g_R$ be the metric given by attaching a cylinder $Y\times [-R,R]$ with product metric $g_\Sigma +d\theta^2 +dt^2$ between $X-U$ and $U$. Let $\omega_{g_R}$ be the unique self-dual harmonic form in $\mathcal{C}$ normalized by $H\cdot [\omega_{g_R}]=1$. Take a sequence $\{R_i\}$ increasing to infinity. 
The corresponding sequence of self-dual harmonic forms $\{\omega_{g_{R_i}}\}$ has a subsequence (also called $\{\omega_{g_{R_i}}\}$) converging to a limit, called $\omega_{LN}$.
By the proof of Lemma 10 in \cite{KM}, $\langle [\omega_{g_{R_i}}], [\Sigma]\rangle \rightarrow 0$.
By the light cone lemma (\cite{LiLiu2} Lemma 2.6), $[\omega_{LN}]= \frac{1}{\langle H, [\Sigma]\rangle} \text{PD}([\Sigma])$.
In particular, $c_1(\mathcal{L})\cdot [\omega_{g_{R_i}}]>0$ as $i>>0$.
Since ${\bf SW}_+(X,\mathcal{L})\neq 0$, $\mathcal{M}_{X,g_{R_i}}(\mathcal{L})$ is non-empty when $i>>0$.
By \cite{KM} Proposition 8 and Lemma 9, $2g(\Sigma)-2\geq |\langle c_1(\mathcal{L}), [\Sigma]\rangle|$ as required.

The proof for $\langle c_1(\mathcal{L}), [\Sigma]\rangle <0$ and ${\bf SW}_-(X,\mathcal{L})\neq 0$ case is the same.
\end{proof}

Under the conditions of this lemma, we call $g_{R_i}$ the long neck metric. We also call the chamber containing $\omega_{R_i}$ with $i>>0$ the long neck chamber and define ${\bf SW}_{LN}(X,\mathcal{L})$ as the {\bf SW} invariant of $X$ with respect to $\mathcal{L}$ and the chamber.

\begin{proof}[Proof of Proposition \ref{bound}]

If we blow up $X$ at $n=\Sigma\cdot \Sigma$ points on $\Sigma$, the blowup surface $\tilde{\Sigma}\subset X\sharp n\overline{\C\PP}^2$ satisfies
$$g(\tilde{\Sigma})=g(\Sigma), \quad [\tilde{\Sigma}]=[\Sigma]-\sum \text{PD}(E_i), \quad  \tilde{\Sigma}\cdot\tilde{\Sigma}=0,$$ with $E_1,\cdots ,E_n$ the exceptional classes.
Suppose  $\mathcal{L}$ is a spin$^c$ structure such that its SW dimension $k(\mathcal L)\geq 0$, and the wall crossing value ${\bf SW}_+(X,\mathcal{L})-
{\bf SW}_-(X,\mathcal{L})$ is nonzero (in $\wedge^* H^1(X;\Z)/Tor$),  and 
$|\langle c_1(\mathcal{L}), [\Sigma]\rangle |< \Sigma\cdot\Sigma.$

Notice that in the formulae of Lemma \ref{formulae},
the forward cone $\tilde{\mathcal{C}}$ of $X\sharp n\overline{\C\PP}^2$ is chosen such that $\mathcal{C}\subset \tilde{\mathcal{C}}$.
Hence PD$([\tilde{\Sigma}])\in \tilde{\mathcal{C}}$.

Let $\mathcal L_1, \mathcal L_2$ be the spin$^c$ structures on  $X\sharp n\overline{\C\PP}^2$ with 
 $$c_1(\mathcal{L}_1)=c_1(\mathcal{L})+\sum E_i, $$
$$c_1(\mathcal{L}_2)=c_1(\mathcal{L})-\sum E_i.$$
Then $k(\mathcal{L}_1)=k(\mathcal{L}_2)=k(\mathcal{L})\geq 0$ and
$$\langle c_1(\mathcal{L}_1), [\tilde{\Sigma}]\rangle =\langle c_1(\mathcal{L}), [\Sigma]\rangle +\Sigma\cdot\Sigma>0, $$
$$\langle c_1(\mathcal{L}_2), [\tilde{\Sigma}]\rangle =\langle c_1(\mathcal{L}), [\Sigma]\rangle -\Sigma\cdot\Sigma<0.$$
So PD$([\tilde{\Sigma}])$ is in the positive (resp. negative) chamber of $c_1(\mathcal{L}_1)$ (resp. $c_1(\mathcal{L}_2)$).

Consider the long neck metric $\tilde{g}_{R_i}$ on $X\sharp n\overline{\C\PP}^2$ related to $\tilde{\Sigma}$, we know that the class of limit 2-form is $[\tilde{\omega}_{LN}]=\frac{1}{\langle H,[\tilde{\Sigma}]\rangle} \text{PD}([\tilde{\Sigma}])$.
So $\tilde{\omega}_{LN}$ is in the positive (resp. negative) chamber of $c_1(\mathcal{L}_1)$ (resp. $c_1(\mathcal{L}_2)$) as well.

By the blowup formula (Lemma \ref{formulae}), we have
$${\bf SW}_{\pm}(X\sharp n\overline{\C\PP}^2, \mathcal{L}_i) ={\bf SW}_{\pm}(X,\mathcal{L}), \quad  i=1, 2.$$ 
By the assumption of non-trivial wall crossing, 
at least one of $${\bf SW}_{\pm}(X, \mathcal{L})$$  is nonzero.
Hence either $${\bf SW}_+(X\sharp n\overline{\C\PP}^2, \mathcal{L}_1) \neq 0$$
 or 
 $${\bf SW}_-(X\sharp n\overline{\C\PP}^2, \mathcal{L}_2)\neq 0.$$
By Lemma \ref{km},
$$2g(\Sigma)-2=2g(\tilde{\Sigma})-2\geq | \langle c_1(\mathcal{L}_i), [\tilde{\Sigma}]\rangle |=|\langle c_1(\mathcal{L}), [\Sigma]\rangle \pm \Sigma\cdot\Sigma|.$$
In either case, $2g(\Sigma)-2\geq \Sigma\cdot\Sigma-|\langle c_1(\mathcal{L}), [\Sigma]\rangle |$.
\end{proof}


\section{Applications}

We will apply Theorem~\ref{main} to obtain explicit genus bound for cohomology classes with non-negative square.

\subsection{Proof of Theorem \ref{optimal}}
Let us make a simple observation.

\begin{lemma}\label{sum with S1xS3}
Suppose $Y$ is a closed 4-manifold and $X=Y\#(S^1\times S^3)$. Then under the natural isomorphism $H_2(Y;\Z)\cong H_2(X;\Z)$, we have
$$mg_X(A)\leq mg_Y(A)\quad \hbox{and}  \quad h_{\Lambda(X)}(A)=h_{\Lambda(Y)}(A).$$
\end{lemma}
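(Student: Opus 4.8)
The plan is to prove the two assertions separately, both exploiting the fact that $S^1\times S^3$ contributes nothing to $H_2$ and has $b^+=0$.

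\medskip

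\noindent\textbf{The genus inequality $mg_X(A)\leq mg_Y(A)$.}
First I would set up the natural isomorphism $H_2(Y;\Z)\cong H_2(X;\Z)$ arising from the connected sum decomposition $X=Y\#(S^1\times S^3)$: since $H_2(S^1\times S^3;\Z)=0$, the Mayer--Vietoris sequence (or the standard connected-sum computation) gives $H_2(X;\Z)\cong H_2(Y;\Z)$, and this isomorphism respects the intersection form because the connect-summing $3$-sphere separates the two pieces. The key geometric point is that the connected sum is performed by removing a small $4$-ball from each summand and gluing along the boundary $S^3$. Given any class $A\in H_2(Y;\Z)$ and any connected, smoothly embedded surface $\Sigma\subset Y$ with $[\Sigma]=A$ realizing $g(\Sigma)=mg_Y(A)$, I would arrange (by general position, since $\dim\Sigma=2$ and $\dim\partial B^4=3$) that $\Sigma$ misses the small ball $B^4\subset Y$ that is excised in forming the connected sum. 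Then $\Sigma$ survives untouched into $X=Y\#(S^1\times S^3)$ as an embedded surface of the same genus representing the corresponding class $A\in H_2(X;\Z)$. Hence $mg_X(A)\leq g(\Sigma)=mg_Y(A)$. This direction requires no Seiberg--Witten input and is purely topological.

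\medskip

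\noindent\textbf{The equality $h_{\Lambda(X)}(A)=h_{\Lambda(Y)}(A)$.}
Here I would argue entirely at the level of cohomology algebras. The connected sum $X=Y\#(S^1\times S^3)$ has cohomology algebra (modulo torsion) $\Lambda(X)\cong\Lambda(Y)\oplus\Lambda_S$, where $\Lambda_S$ is the $b^+=0$ algebra modeled on $S^1\times S^3$ introduced in the Example, because the extra $S^1\times S^3$ contributes a class in $\Lambda^1$ and its dual in $\Lambda^3$ with trivial $T$-pairing. The paper has already recorded the decisive fact in Subsection~\ref{algebra}: for any nonnegative integer $l$, the sum $\Lambda\oplus l\Lambda_S$ and $\Lambda$ have the same $b^+$, $\tilde b_1$, $\tilde\chi$, $\Lambda^2$, and $\Gamma$. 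Applying this with $l=1$, the algebras $\Lambda(X)=\Lambda(Y)\oplus\Lambda_S$ and $\Lambda(Y)$ share all the data on which the set of adjunction classes $\mathcal C_\Lambda$ and the function $h$ depend, namely $\Lambda^2$, the form $\Gamma$, the signature $\sigma$, the modified Euler number $\tilde\chi$, and $\mathrm{Im}\,T$. Since the definitions of type I and type II adjunction classes, of $h_c$ in \eqref{c-genus}, and of $h(A)=\max_c h_c(A)$ refer only to these invariants, the maximizing computation is literally identical for the two algebras, yielding $h_{\Lambda(X)}(A)=h_{\Lambda(Y)}(A)$.

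\medskip

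\noindent\textbf{Main obstacle.}
The cohomology-algebra equality is essentially immediate from the already-established invariance of $\Lambda^2,\Gamma,\tilde\chi,\sigma,\mathrm{Im}\,T$ under summing with $\Lambda_S$, so the only real care needed is in the genus inequality: I must ensure the minimal-genus surface can be pushed off the connect-sum ball without changing its genus or homology class. This is standard transversality in dimension four (a $2$-dimensional surface generically misses a chosen $4$-ball, or can be isotoped off it), but it is the one step with genuine geometric content, so I would state it carefully rather than treat it as wholly automatic.
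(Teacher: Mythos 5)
Your proposal is correct and follows essentially the same route as the paper, whose proof is just two lines: the $mg$ inequality is declared clear for exactly the reason you give (a surface in $Y$ persists into $X$), and the $h$ equality is cited from Lemma~\ref{reduction}, whose content is precisely the invariance you invoke, namely that summing with $\Lambda_S$ leaves $\Lambda^2$, $\Gamma$, $\tilde\chi$, $\sigma$, and ${\rm Im}\, T$ (hence the set of adjunction classes and $h$) unchanged. Your version merely spells out the transversality step and bypasses the Lefschetz-reduction packaging, but the mathematical substance is identical.
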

\begin{proof}
The inequality about $mg$  is clear since any smooth surface in $Y$ can be considered as a surface in $X$ 
representing the same class. 

The equality about $h$ follows from Lemma \ref{reduction}.
\end{proof}

\begin{prop} \label{symplectic}
Suppose that $\Lambda$ is a cohomology algebra of $b^+=1$ type. When $2\tilde\chi(\Lambda)+3\sigma(\Lambda)\geq 0$ and $\Lambda$ is Lefschetz, there exists a closed, connected, symplectic 4-manifold $X_{\Lambda}$ 
and an algebra  isomorphism $H^*(X_{\Lambda};\Z)/Tor\cong \Lambda$ such that 
$$h(A)=mg_{X_{\Lambda}}(A)$$
 whenever $h(A)\geq 0$.  
Furthermore,  $X_{\Lambda}$ can be chosen   from the list in Theorem \ref{optimal}. 
\end{prop}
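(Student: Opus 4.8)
The plan is to prove Proposition~\ref{symplectic} by exhibiting, for each Lefschetz cohomology algebra $\Lambda$ of $b^+=1$ type with $2\tilde\chi+3\sigma\geq 0$, an explicit symplectic model $X_\Lambda$ whose minimal genus function matches the lower bound $h$ computed in Proposition~\ref{h for reduced} and Lemma~\ref{case4}. Since Theorem~\ref{main} already gives $mg_{X_\Lambda}(A)\geq h(A)$, the content here is entirely the reverse inequality: for each reduced class $A$ with $h(A)\geq 0$, I must produce an embedded surface of genus exactly $h(A)$ in the chosen model. By the invariance of $h$ under $Aut(\Gamma)^T$ established earlier and by Lemma~\ref{unique}, it suffices to treat reduced classes, and by Lemma~\ref{reduction} together with Lemma~\ref{sum with S1xS3} I may restrict attention to the Lefschetz algebra itself. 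First I would match each of the five cases of Section~\ref{nonneg inv} to a specific manifold from the list (a)--(d): Case~(2) corresponds to an Enriques surface, Cases~(1) and~(4) to $S^2\times S^2$ and blowups of $\C\PP^2$, and Cases~(3),~(5)---where $\tilde b_1=2$---to $S^2$-bundles over $T^2$ (trivial and twisted, accounting for the two forms $U$ and $V$).

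The central step is the genus-realization construction in each case. For Cases~(1),~(3),~(4),~(5) the model $X_\Lambda$ is symplectic with an $S^2$-fibration structure (ruled or blown-up ruled), so I would represent a reduced class $A$ by taking the appropriate combination of a section and fiber classes, then use standard surgery on surfaces: positively-intersecting holomorphic or symplectic representatives can be resolved or tubed to form a connected embedded surface whose genus is computed by the adjunction formula. Because the models are symplectic and $A$ (up to sign) lies in the closure of the forward cone with nonnegative square, the canonical class $K$ of $X_\Lambda$ plays the role of the adjunction class $c_0$ identified at the end of Section~2.3.2; indeed the five bullet values of $c_0$ are precisely $-K$ (or $K$) for the respective models, which is why $h_{c_0}(A)$ equals the genus obtained. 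I would verify case by case that the surface constructed has genus $1+\tfrac12(A\cdot A-|c_0\cdot A|)$, matching formulas \eqref{ha-1}--\eqref{ha-5} and \eqref{negoddgenus}. For Case~(2), the Enriques surface has $K$ torsion, so $c_0=0$ and the realizing surfaces come from the elliptic fibration and its multisections.

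The main obstacle I anticipate is the realization for classes lying on or near the boundary of the forward cone, i.e. the cases flagged in Corollary~\ref{leq0} and Lemma~\ref{leq0case4} where $h(A)=0$. There one must produce an \emph{embedded sphere}, not merely minimize an adjunction-type quantity, and sphere realization is subtle: e.g.\ in Cases~(1),~(3),~(5) the fiber class and the section (or $H-E_1$-type classes in Case~(4)) must be genuinely spherical, which requires that the chosen fibration actually have a smooth sphere section and irreducible spherical fibers. I would handle this by choosing the symplectic models to be \emph{minimal} ruled surfaces (or their standard blowups) where the projective-bundle structure guarantees holomorphic sphere sections and fibers, so that the boundary classes with $h(A)=0$ are represented by these honest spheres. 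The primitivity hypothesis in the statement of Theorem~\ref{optimal} for square-zero classes is exactly what ensures such a class is (up to $Aut(\Gamma)^T$) a fiber or section class and hence spherical; non-primitive square-zero classes need not be, which is why they are excluded.

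A secondary technical point is confirming that each model genuinely carries the prescribed cohomology algebra $\Lambda$, including the $T$-pairing on $H^1$. For the $\tilde b_1=2$ cases the $S^2$-bundle over $T^2$ supplies the one-dimensional image of $T$ through the cup product $H^1(T^2)\times H^1(T^2)\to H^2$, pulled back to the total space; I would check that the cup-product structure and signature of the bundle match the algebra in Cases~(3) and~(5), distinguishing the trivial bundle (form $U$) from the nontrivial one (form $V$). Once the algebra isomorphism and the genus computations are in place for all five cases, Proposition~\ref{symplectic} follows, and passing back from $\Lambda_{red}$ to a general $\Lambda$ via the connect sum with $l\,(S^1\times S^3)$ using Lemma~\ref{sum with S1xS3} then yields Theorem~\ref{optimal} in full.
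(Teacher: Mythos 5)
Your overall strategy (the same five models as the paper, the lower bound from Theorem~\ref{main}, and the upper bound by exhibiting surfaces of genus $h(A)$) is the right one, and your observation that $c_0$ coincides with $\pm K_{X_\Lambda}$ in each case is correct. But there is a genuine gap at your very first reduction step. You assert that ``by the invariance of $h$ under $Aut(\Gamma)^T$ \dots and by Lemma~\ref{unique}, it suffices to treat reduced classes.'' Invariance of $h$ under $Aut(\Gamma)^T$ is a purely algebraic fact, whereas the minimal genus function $mg_{X_\Lambda}$ is only invariant under those automorphisms of $H^2$ that are induced by orientation-preserving diffeomorphisms of $X_\Lambda$. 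If $A$ is not reduced and $A'=\alpha(A)$ is its reduced representative with $\alpha\in Aut(\Gamma)^T$, then constructing a surface of genus $h(A')$ in the class $A'$ gives $mg_{X_\Lambda}(A')\leq h(A')$ but says nothing about $mg_{X_\Lambda}(A)$ unless $\alpha$ (or some automorphism carrying $A'$ to $A$) is realized by a diffeomorphism. So your argument, as written, proves the desired equality only for reduced classes. The missing ingredient is precisely the second assertion of the paper's Lemma~\ref{properties}: $D(X_\Lambda)=Aut(\Gamma)^T$, i.e., every element of $Aut(\Gamma)^T$ is induced by an orientation-preserving diffeomorphism of the model. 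This is elementary for the ruled models of Cases (1), (3), (5), but it is Wall's theorem for $\C\PP^2\sharp n\overline{\C\PP}^2$ and L\"onne's theorem (extending Friedman--Morgan) for the Enriques surface; it cannot be waved through, and without it the non-reduced classes are untouched.

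A second, lesser point: your plan re-derives, rather than cites, the genus-realization half of the argument, and the sketch underestimates two of the cases. For Cases (1), (3), (5) the section/fiber and algebraic-curve constructions you describe do work, but for Case (4) realizing every reduced class $aH-\sum b_iE_i$ by an embedded surface of genus $\binom{a-1}{2}-\sum_{i}\binom{b_i}{2}$ is the main content of the papers of Li--Li, B.-H.~Li and Ruberman (it amounts to producing, e.g., degree-$a$ plane curves with prescribed ordinary multiple points and resolving them, which is not a one-line surgery argument), and for Case (2) the equality $mg(A)=\frac12(A\cdot A+2)$ for \emph{all} classes of non-negative square rests on the generalized Thom conjecture of Morgan--Szab\'o--Taubes together with symplectic Seiberg--Witten theory of Li--Liu, not merely on multisections of the elliptic fibration. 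The paper's own proof simply quotes these computations (the first assertion of Lemma~\ref{properties}) and compares the resulting formulas with \eqref{ha-1}--\eqref{ha-5} and \eqref{negoddgenus}; if you intend to reconstruct them instead, Cases (2) and (4) each require substantial independent arguments.
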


\begin{proof} We still divide the discussion into five cases as in the beginning of Section 2.3
and choose $X_{\Lambda}$ in each case.

{\bf Case (1)}  $T$ trivial and  $\Gamma=U$.  In this case, let $X_{\Lambda}=S^2\times S^2$.

{\bf Case (2)} $T$ trivial and $\Gamma=U\oplus (-E)$.  In this case, choose $X_{\Lambda}$ to be the  Enriques surface.

{\bf Case (3)} $\tilde b_1(\Lambda)= b_1(\Lambda)=2$ and $\Gamma=U$. In this case, let   $X_{\Lambda}=S^2\times T^2$. 
 
{\bf Case (4)} $T$ trivial and $\Gamma=\langle 1 \rangle \oplus n\langle -1\rangle, 0\leq n\leq 9$.
  In this case, let $X_{\Lambda}=\C\PP^2 \sharp n \overline{\C\PP}^2$.

{\bf Case (5)} $\tilde b_1(\Lambda)= b_1(\Lambda)=2$ and $\Gamma=\langle 1\rangle \oplus \langle -1\rangle$. In this case, choose  $X_{\Lambda}$ to be the non-trivial $S^2$-bundle over $T^2$. 

We will need the following facts about these symplectic manifolds $X_{\Lambda}$. 

\begin{lemma} \label{properties}
For those symplectic manifolds $X=X_{\Lambda}$,
\begin{itemize}
\item $mg_X(A)$ has been computed for any class $A$ with $A\cdot A\geq 0$.

\item  $D(X)=Aut(\Gamma)^T$, where $D(X)$ is the subgroup of $Aut(\Gamma)$ induced by orientation-preserving diffeomorphisms of $X$.

\end{itemize}
\end{lemma}

\begin{proof}
The fact that $D(X)=Aut(\Gamma)^T$ for $S^2\times S^2$,  $S^2$-bundles over $T^2$ is 
straightforward (see e.g \cite{LL1} Remark 3). The equality for $\C\PP^2 \sharp n \overline{\C\PP}^2, n\leq 9$
 was due to Wall \cite{Wall} Theorem 2 and the following corollary, and for Enriques surface due to L\"onne \cite{Lonne} Theorem 8 (extending  Friedman-Morgan \cite{FM}).

  The minimal genus function $mg$  was computed in \cite{LL}, \cite{Rub} for $ S^2\times S^2$,  in 
\cite{LL1}
 for $S^2$-bundles over $ T^2$, and in \cite{LL}, \cite {Li} for $\C\PP^2 \sharp n \overline{\C\PP}^2, n\leq 9$. 
 For the Enriques surface, the generalized Thom conjecture in \cite{MST96} implies that  the genus bound \eqref{enriques}  is valid for any class with non-negative square. Moreover,   the minimal genus of $A$ with $A\cdot A\geq 0$ is  given precisely by  $\frac{1}{2}  (A\cdot A +2)$,
which  can be seen from symplectic Seiberg-Witten theory as in \cite{LiLiu4}. 

\end{proof}

By Lemma \ref{properties},  for any class of non-negative square $A$ of  $X_{\Lambda}$, there is a geometric automorphism $\phi\in D(X_{\Lambda})$
such that $A'=\phi(A)$ is reduced. 
Since $mg_X(A')=mg_X(A)$ and $h(A')=h(A)$,
it suffices to show that $h=mg_X$ for reduced classes with non-negative square and non-negative $h$. 

This equality is clear from comparing 
\eqref{ha-1}-\eqref{ha-5},  \eqref{negoddgenus} and the references in Lemma \ref{properties}.



\end{proof}

We remark that there are actually connected symplectic surfaces in the class $A$ for some symplectic forms,  
whenever $A\cdot A>0$, or
$A\cdot A=0$ and $A$ is primitive. 
Another fact is that  when the symplectic form is reduced, the symplectic canonical class is the same as $c_0$.

Now we complete the proof of Theorem \ref{optimal}. 

\begin{proof} [Proof of Theorem \ref{optimal}]

Let $\Lambda_{red}$  be the reduction of $\Lambda$. 
Then 
$$2\tilde\chi(\Lambda_{red})+3\sigma(\Lambda_{red})=2\tilde\chi(\Lambda)+3\sigma(\Lambda)\geq 0.$$
For $A\in \Lambda^2$ with $A\cdot A>0$, under the natural isomorphism between $\Lambda^2$ and $\Lambda_{red}^2$, Lemma \ref{reduction}, Corollary \ref{leq0} and Lemma \ref{leq0case4} imply $h(A)\geq 0$.

Let $Y=X_{\Lambda_{red}}$ as in Proposition \ref{symplectic}. Then under an isomorphism 
(in fact  under any isomorphism)  $H^*(Y;\Z)/Tor\cong \Lambda_{red}$, we have 
$$h(A)=mg_{Y}(A)$$
 whenever $h(A)\geq 0$.

 Let  $X_{\Lambda}=Y\# l (S^1\times S^3)$,  where $l=b_1(\Lambda)-\tilde b_1(\Lambda)$.
 By Theorem \ref{main}, Lemma \ref{sum with S1xS3} and Proposition \ref{symplectic}, we have 
 $$h_{\Lambda(X_{\Lambda})}(A)\leq   mg_{X_{\Lambda}}(A)\leq  mg_Y(A)=h_{\Lambda(Y)}(A)=h_{\Lambda(X_{\Lambda})}(A),$$
 whenever $h(A)\geq 0$. 
 
 This implies the desired equality.

\end{proof}

\subsection{Sphere representability}
In light of Theorem \ref{main} and Theorem \ref{optimal},  when $2\tilde \chi+3\sigma\geq 0$,
Corollary \ref{leq0} and Lemma \ref{leq0case4} provide strong  
constraints for classes represented by embedded spheres.  We summarize the constraints
as follows. 

\begin{prop} \label{sphere}
Suppose $2\tilde \chi(\Lambda)+3\sigma(\Lambda)\geq 0$ and $A\in \Lambda^2$ is a nonzero reduced class with $A\cdot A\geq 0$
and represented by a sphere. 

If $A\cdot A>0$, then $\tilde b_1(\Lambda)=0$. And either
\begin{itemize}
\item $\Lambda$ is in {\bf Case (1)} and $A=aF+B$ with $a\geq 1$, or
\item $\Lambda$ is in {\bf Case (4)} and $A$ is one of the following classes
$$H,\quad 2H,    \quad       a H - (a-1) E_1,  \quad    aH- (a-1) E_1 -E_2,  \quad  a\geq 2.$$

\end{itemize}

If $A\cdot A=0$, then either
\begin{itemize}
\item $\Lambda$ is in {\bf Cases (1), (3), (5)}  and $A=aF$ with $a\geq 1$, or
\item $\Lambda$ is in {\bf Case (4)} and $A=a(H-E_1)$ with $a\geq 1$.

\end{itemize}
\end{prop}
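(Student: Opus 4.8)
The plan is to convert sphere representability into the single numerical condition $h(A)\le 0$, and then simply read off the answer from the sign computations already carried out in Corollary \ref{leq0} and Lemma \ref{leq0case4}. First I would observe that if a class $A$ with $A\cdot A\ge 0$ is represented by an embedded sphere in a $4$-manifold $X$ with $b^+=1$ and $H^*(X;\Z)/\mathrm{Tor}\cong\Lambda$, then $mg_X(A)=0$, so Theorem \ref{main} immediately yields $h(A)\le mg_X(A)=0$. Since $h$ is $Aut(\Gamma)^T$-invariant and $A$ is assumed reduced, the whole problem reduces to enumerating the reduced classes of non-negative square satisfying $h(A)\le 0$ and then sorting them according to whether $A\cdot A>0$ or $A\cdot A=0$.

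Next I would dispose of the five cases in turn. Case (2) is eliminated at once: by \eqref{enriques} every nonzero class of non-negative square there has $h(A)>0$, so none is represented by a sphere. For Cases (1), (3), (5) the list of reduced classes with $h(A)\le 0$ is exactly the content of Corollary \ref{leq0}, and for Case (4) it is the content of Lemma \ref{leq0case4}. In both places the genuine case-analysis has already been done, so this step is pure bookkeeping.

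Now I would extract the two halves of the statement. For the assertion that $A\cdot A>0$ forces $\tilde b_1(\Lambda)=0$: recalling that under $2\tilde\chi+3\sigma\ge 0$ and $b^+=1$ the only possibility with $\tilde b_1\ne 0$ is $\tilde b_1=2$, i.e.\ Cases (3) and (5), I would note that Corollary \ref{leq0} shows every reduced class there with $h(A)\le 0$ has $b=0$, hence $A\cdot A=0$. Thus a positive-square sphere class can only live in Cases (1), (2), (4), and Case (2) is already out, leaving (1) and (4). Reading off Corollary \ref{leq0} in Case (1) gives $A=aF+B$ with $a\ge 1$ (the entry $a\ge b=1$, for which $A\cdot A=2a>0$), and keeping only the positive-square entries of Lemma \ref{leq0case4} in Case (4) gives $H$, $2H$, $aH-(a-1)E_1$ and $aH-(a-1)E_1-E_2$ with $a\ge 2$. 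For the $A\cdot A=0$ half, the zero-square reduced classes with $h(A)\le 0$ are $A=aF$ with $a\ge 1$ in each of Cases (1), (3), (5), and $A=a(H-E_1)$ with $a\ge 1$ in Case (4), again directly from the two cited results.

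The step I would expect to be most delicate is Case (4), but in fact all of its difficulty has been pushed into Lemma \ref{leq0case4}, proved in the Appendix; granting that result, the present proof only needs to split its list by the sign of $A\cdot A$ and confirm that each entry is genuinely reduced. Concretely one checks, for instance, that $aH-(a-1)E_1$ has square $2a-1>0$, that $aH-(a-1)E_1-E_2$ has square $2a-2>0$, and that $a(H-E_1)$ has square $0$. Hence the proposition is essentially a transcription once $h(A)\le 0$ has been forced, and I anticipate no substantive obstacle beyond this matching of normalizations.
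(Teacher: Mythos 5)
Your proposal is correct and is essentially identical to the paper's own (implicit) argument: the paper derives Proposition \ref{sphere} precisely by noting that a sphere representative forces $mg_X(A)=0$, hence $h(A)\le 0$ by Theorem \ref{main}, and then reading off the reduced classes with $h(A)\le 0$ from Corollary \ref{leq0} and Lemma \ref{leq0case4}, sorted by the sign of $A\cdot A$. Your bookkeeping (Case (2) excluded via \eqref{enriques}, Cases (3) and (5) contributing only square-zero classes so that $A\cdot A>0$ forces $\tilde b_1=0$, and the Case (4) list split by square) matches the paper's summary exactly.
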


When $2\tilde \chi+3\sigma<0$, the calculation of $h$ is harder.  Nevertheless, we still hope to show:
the only  possible  classes (with non-negative square)  represented by spheres are 
the same as in the case   $2\tilde \chi+3\sigma\geq 0$.

In the 80s and 90s   using Donaldson's diagonalization of definite smooth manifolds, there have been many applications to representing classes by spheres in manifolds with $b^+=1$. 
The most general one is due to Kikuchi (Theorem 1 in \cite{Ki}), which is reformulated for the trivial 
$T$ case as follows:

\begin{theorem}  \label{T trivial}
 Suppose $X$ is a smooth, closed, oriented 4-manifold with trivial $T(X)$.  Suppose $A$ is a class with $A\cdot A>0$ and represented by an embedded sphere. Then $A$ is equivalent under $Aut(\Gamma)$ to one of the classes in the first part of Proposition \ref{sphere}.
 


\end{theorem}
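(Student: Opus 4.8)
I read the statement with the standing hypothesis $b^+(X)=1$ of this section; it is indispensable, since for $b^+\ge 2$ Wall's examples already produce positive-square spheres in indefinite odd forms with trivial $T$. As $T(X)$ is trivial we have $\tilde b_1=0$, and the only data entering the argument are the unimodular form $\Gamma$ on $\Lambda^2=H_2(X;\Z)/\mathrm{Tor}$ and the class $A$ with $A\cdot A=n>0$; the fundamental group is irrelevant, because the single global input is Donaldson's diagonalization theorem, which needs no hypothesis on $\pi_1$. The plan is to manufacture a closed negative definite $4$-manifold out of the embedded sphere $\Sigma$, diagonalize it, and then read off from the resulting lattice embedding both the shape of $\Gamma$ and the $Aut(\Gamma)$-orbit of $A$.

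First I would lower the self-intersection. Blowing up $X$ at $n-1$ points of $\Sigma$ gives $\hat X=X\#(n-1)\overline{\C\PP}^2$, whose proper transform $\hat\Sigma$ is again an embedded sphere, now with $\hat\Sigma\cdot\hat\Sigma=1$. A tubular neighborhood of a $(+1)$-sphere is $\C\PP^2$ with an open ball removed, so $\hat X$ splits off a $\C\PP^2$ summand, $\hat X\cong \hat W\#\C\PP^2$ with $\hat W$ closed; as the $\C\PP^2$ factor carries the unique positive direction, $\hat W$ is negative definite and Donaldson's theorem yields $Q_{\hat W}\cong m\langle -1\rangle$ with $m=b_2(X)+n-2$. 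Every class of $X$ orthogonal to $A$ is orthogonal to $[\hat\Sigma]$ and to the exceptional classes, hence lies in $H_2(\hat W)$, giving an isometric embedding
$$A^\perp\hookrightarrow m\langle -1\rangle=\Z^m_-$$
of the rank-$b^-$ negative definite lattice $A^\perp\subset\Lambda^2$. Writing $h=[\hat\Sigma]$ and $e_i=h+g_i$ for the exceptional classes, one finds $g_i\cdot g_i=-2$, $g_i\cdot g_j=-1$ $(i\ne j)$, and $A=nh+\sum_i g_i$, so the whole configuration is encoded by the $(-2)$-classes $g_i$ inside $\Z^m_-$.

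This one embedding drives both conclusions. For the shape of $\Gamma$: the even unimodular $b^+=1$ forms are $U\oplus k(-E)$, and if $A$ lies in the $U$-summand then $A^\perp$ contains $-E_8$; but an embedding $A^\perp\hookrightarrow\Z^m_-$ would embed $E_8$ into $\Z^m$, which is impossible since $E_8$, being even, unimodular and indecomposable, is never an orthogonal summand—hence never a sublattice—of a diagonal lattice. This forces $k=0$, i.e. $\Gamma=U$ (Case~(1)); the odd forms are diagonal $\langle 1\rangle\oplus n\langle -1\rangle$ (Case~(4)) already by the classification of indefinite unimodular forms. For the orbit of $A$: one classifies the configurations $\{g_i\}$ of $(-2)$-classes in $\Z^m_-$ with $g_i\cdot g_j=-1$, equivalently the ways $A^\perp$ sits in $\Z^m_-$, up to the automorphisms realizing them. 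This elementary reflection-lattice computation returns exactly $H,\ 2H,\ aH-(a-1)E_1,\ aH-(a-1)E_1-E_2$ in the odd case and $aF+B$ when $\Gamma=U$, which is the list in the first part of Proposition~\ref{sphere}.

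The hard part is the even case when $A$ has a nonzero component along an $E$-summand: then $A^\perp$ need not literally contain $-E_8$ and the clean non-embedding argument stalls, so one must instead prove directly that the even lattice $A^\perp$ (of rank $b^-=1+8k$) fails to embed in $\Z^m_-$, which is the genuinely delicate lattice-theoretic point. In the range $2\tilde\chi+3\sigma\ge 0$ this difficulty disappears, because there $k\le 1$ and Theorem~\ref{main}, combined with the computation $h(A)=\frac{1}{2}(A\cdot A+2)>0$ of Proposition~\ref{h for reduced} (Case~(2)), already forbids any positive-square sphere. Thus the content of the theorem beyond Proposition~\ref{sphere} is precisely to push the exclusion, via the $E_8$ non-embedding obstruction, into the range $2\tilde\chi+3\sigma<0$; the residual task is the finite but lengthy bookkeeping of the classification of $A$.
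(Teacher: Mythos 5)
First, a point of comparison: the paper itself contains no proof of this statement. Theorem \ref{T trivial} is explicitly presented as a reformulation of Kikuchi's Theorem 1 in \cite{Ki}, and the "proof" is the citation; the paper only records that such results rest on Donaldson's diagonalization theorem. Your framework is therefore the right one and agrees with the method behind the cited source: blow up $n-1$ points of $\Sigma$, split off a $\C\PP^2$ summand along the resulting $(+1)$-sphere, apply Donaldson's theorem (valid with no hypothesis on $\pi_1$) to the negative definite complement $\hat W$, and encode everything in the $(-2)$-vectors $g_i=E_i-h$ inside $\Z^m_-=h^\perp$, with $g_i\cdot g_j=-1$ and $A=nh+\sum_i g_i$; your observation that $b^+=1$ must be read into the hypotheses is also correct.

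However, as it stands your proposal is not a proof, and both gaps are ones you flag yourself. The fatal one is that the heart of the theorem --- that these configurations, up to automorphisms of $\Z^m_-$ (signed permutations), force $A$ into exactly the orbits $H$, $2H$, $aH-(a-1)E_1$, $aH-(a-1)E_1-E_2$, respectively $aF+B$ --- is merely asserted ("this elementary reflection-lattice computation returns exactly\dots"). That classification, together with upgrading an isomorphism of pairs $(\Gamma,A)$ to an $Aut(\Gamma)$-equivalence, \emph{is} Kikuchi's theorem; without it you have only reformulated the statement as a lattice problem. The second gap --- the even case where $A$ has a component in a $(-E)$-summand, which you call "the genuinely delicate lattice-theoretic point" and resolve only when $2\tilde\chi+3\sigma\geq 0$ --- is in fact the easy one, and you have misjudged where the difficulty lies: if $\Gamma=U\oplus k(-E)$ with $k\geq 1$, then $\sigma=-8k<0$, so $c=0$ is characteristic with $c\cdot c>\sigma$, i.e.\ a type I adjunction class, and Theorem \ref{main} gives $mg_X(A)\geq h_0(A)=1+\tfrac12 A\cdot A>0$ for every $A$ with $A\cdot A>0$; no embedding obstruction is needed at all, for any $k\geq 1$. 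Likewise $\Gamma=U$ and odd $\Gamma$ with $b^-\leq 9$ are already settled by Theorem \ref{main} together with Lemma \ref{unique}, Corollary \ref{leq0} and Lemma \ref{leq0case4}. So the irreducible content of Theorem \ref{T trivial} beyond the paper's own results is precisely the odd case with $b^-\geq 10$, and that is exactly the case your proposal reduces correctly to a configuration count but never carries out.
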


When $T$ is non-trivial, we have the following  simple observation. 

\begin{lemma} \label{T non-trivial}
When $T$ is non-trivial,  there are no classes with positive square and representable by spheres.
\end{lemma}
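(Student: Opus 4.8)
Suppose toward a contradiction that $A\in\Lambda^2$ with $A\cdot A>0$ is represented by a smoothly embedded sphere, so that $mg_X(A)=0$. By Theorem~\ref{main} it suffices to produce a \emph{type II} adjunction class $c$ with $|c\cdot A|\le A\cdot A$, for then
$$h(A)\ge h_c(A)=1+\frac{A\cdot A-|c\cdot A|}{2}\ge 1>0,$$
contradicting $mg_X(A)\ge h(A)$. The plan is therefore to construct such a $c$. Let $F$ generate the one-dimensional subspace $\mathrm{Im}\,T$; recall $F\cdot F=0$ and $F\ne 0$. Since $b^+=1$ and $A\cdot A>0$, the class $A$ spans the positive part of $\Gamma$ and $A^\perp$ is negative definite, so the nonzero null vector $F$ cannot lie in $A^\perp$; hence $A\cdot F\ne 0$. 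This guarantees that any characteristic class pairing nontrivially with $F$ will satisfy the second condition of a type II adjunction class.

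The key simplifying observation is that the norm condition $c\cdot c\ge 2\tilde\chi+3\sigma$ is very mild here: by \eqref{typeII} we have $2\tilde\chi+3\sigma=\sigma+8-4\tilde b_1$, and since $T$ is nontrivial $\tilde b_1\ge 2$, while $b^+=1$ forces $\sigma\le 1$; thus $2\tilde\chi+3\sigma\le 1$. In particular, if $2\tilde\chi+3\sigma\ge 0$ then necessarily $\tilde b_1=2$ and $\Lambda$ falls under {\bf Case (3)} or {\bf (5)}, so Corollary~\ref{leq0} already yields $h(A)>0$ for every reduced class with $A\cdot A>0$, and the lemma follows from $Aut(\Gamma)^T$-invariance. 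The remaining, genuinely new range is $2\tilde\chi+3\sigma<0$, where the constraint on $c\cdot c$ leaves ample room.

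To build $c$ in general I would split off a hyperbolic plane: as in the proof of Lemma~\ref{Lef}, primitivity of $F$ and unimodularity of $\Gamma$ give $B$ with $F\cdot B=1$, and $\Lambda^2=\langle F,B\rangle\oplus W$ with $W$ negative definite and unimodular. Writing $A=\alpha F+\beta B+A_W$ (so $\beta=A\cdot F\ne0$) and taking $c=\gamma F+2B+c_W$ with $c_W$ characteristic in $W$ makes $c$ characteristic and forces $c\cdot F=2\ne 0$ automatically. Here $c\cdot c=4\gamma+4\,B\cdot B+c_W\cdot c_W$ and $c\cdot A=\gamma\beta+2\alpha+2\beta\,(B\cdot B)+c_W\cdot A_W$. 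The strategy is to choose $c_W$ of least norm (and, among those, with $|c_W\cdot A_W|$ as small as possible, exploiting the sign freedom in the characteristic vectors of $W$) and then pick the smallest admissible $\gamma$, which pins $c\cdot c$ into the window $[\,2\tilde\chi+3\sigma,\ 2\tilde\chi+3\sigma+8\,)$.

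The heart of the matter is to see that this $c$ can be arranged to land in the slab $|c\cdot A|\le A\cdot A$ while keeping $c\cdot c\ge 2\tilde\chi+3\sigma$. Feasibility is clear over $\R$: the reverse Cauchy–Schwarz inequality $(c\cdot A)^2\ge (A\cdot A)(c\cdot c)$ together with $2\tilde\chi+3\sigma\le 1\le A\cdot A$ shows that the ``diagonal'' choice $c\approx A$ meets both requirements (indeed $c=A$ would give $h_c(A)=1$). The main obstacle is thus purely integral: because raising $c\cdot c$ means moving in the timelike ($A$-)direction, which simultaneously enlarges $|c\cdot A|$, one cannot decouple the two conditions through a single one-parameter family, and one must instead balance the two free parameters $\gamma$ and $c_W$ so that the rounding error stays inside the slab. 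I expect the crux of the write-up to be this lattice rounding estimate, carried out on the negative definite summand $W$ and then patched with the hyperbolic coordinates.
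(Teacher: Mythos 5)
Your reduction of the range $2\tilde\chi+3\sigma\ge 0$ to Corollary~\ref{leq0} (via Lemma~\ref{unique} and Theorem~\ref{main}) is correct, as is the observation that $A\cdot F\ne 0$ whenever $A\cdot A>0$. But in the remaining --- and main --- range $2\tilde\chi+3\sigma<0$, your argument stops exactly at its crux: you never prove that a type II adjunction class $c$ with $|c\cdot A|\le A\cdot A$ exists, deferring this to an unspecified ``lattice rounding estimate.'' This is a genuine gap, and it is not a routine rounding problem: the specific recipe you propose (fix the $B$-coefficient of $c$ to be $2$, take $c_W$ characteristic of least norm in $W$ with $|c_W\cdot A_W|$ minimal, then tune $\gamma$) provably fails. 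Take $\Lambda$ with $\tilde b_1=2$ and $\Gamma=U\oplus\langle -1\rangle$ (so $2\tilde\chi+3\sigma=-1$), with basis $F,B,E$ satisfying $F\cdot F=B\cdot B=0$, $F\cdot B=1$, $E\cdot E=-1$ and $\mathrm{Im}\,T=\Z F$, and let $A=3F+1667B+100E$, so $A\cdot A=2$. A characteristic class with $B$-coefficient $2$ has the form $c=2qF+2B+eE$ with $e$ odd; the type II condition $c\cdot c=8q-e^2\ge -1$ forces $q\ge 0$, while $|c\cdot A|=|3334q+6-100e|\le 2$ forces $50e=1667q+j$ with $j\in\{2,3,4\}$, which has no solution for $q=0$ and for $q\ge 1$ gives $e>33q$, hence $e^2-1>8q$, a contradiction. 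So no class of your proposed form works, although $c=34B+E$ does (it has $c\cdot c=-1$, $c\cdot A=2$, $h_c(A)=1$): the $B$-coefficient must be treated as a free parameter, and the existence question then becomes a genuinely Diophantine one --- of the same nature as the verification the authors omit for Case 3 of Conjecture~\ref{square zero}, and closely tied to the computation of $h$ for $2\tilde\chi+3\sigma<0$ which they postpone to a future paper.

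The paper's own proof bypasses all of this and uses no Seiberg--Witten input whatsoever. Given an embedded sphere $S$ with $[S]\cdot[S]=s>0$, form the connected sum of pairs of $(X,S)$ with $s-1$ copies of $(\overline{\C\PP^2},\overline{\C\PP^1})$, producing a sphere $S'$ of self-intersection $1$; the boundary of its tubular neighborhood is $S^3$, so $(X',S')$ decomposes as $(Z,\emptyset)\#(\C\PP^2,\C\PP^1)$, and $Z$ is negative definite because $b^+(X')=1$. Since $H^1(Z;\Z)=H^1(X;\Z)$ and cup products of degree-one classes are unaffected by connected sum with simply connected pieces, the nonzero isotropic class $F$ generating $\mathrm{Im}\,T$ lies in $H^2(Z;\Z)$, contradicting negative definiteness of $Z$. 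To complete your write-up you would either have to solve the Diophantine existence problem with variable $B$-coefficient (thereby proving the stronger statement $h(A)\ge 1$, which the paper does not claim in this range), or switch to this elementary topological argument.
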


\begin{proof}
Suppose $A$ is a class with $A\cdot A=s>0$ and represented by an embedded sphere $S$ in $X$. Let $(X', S')$ be the connected sum of the pair $(X, S)$ and $s-1$ copies of the pairs $(\overline{\C \PP^2}, \overline{\C \PP^1})$. Notice that $S'$ is a sphere with self-intersection $1$
whose neighborhood has $S^3$ as boundary.
Thus $(X', S')$ can be decomposed into the connected sum $(Z, \emptyset)\# (\C \PP^2, \C \PP^1)$, 
where $Z$ has negative-definite intersection form. However, $H^1(Z;\Z)=H^1(M;\Z)$ and hence
$F$ is in $H^2(Z;\Z)$ and non-trivial. But this is impossible since $F\cdot F=0$ and $Z$ is negative definite.
\end{proof}

So the remaining case is 
$A\cdot A=0.$

\begin{conj}\label{square zero} Suppose $2\tilde \chi+3\sigma<0$. 
Suppose $A$ is a class with $A\cdot A=0$ and represented by an embedded sphere.
Then $A$ is equivalent under $Aut(\Gamma)^T$ to one of the classes in the last part of Proposition \ref{sphere}.


\end{conj}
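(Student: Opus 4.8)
The plan is to translate sphere-representability into the algebraic inequality $h(A)\le 0$ and then classify the square-zero classes that satisfy it. Since a sphere has genus $0$, Theorem~\ref{main} gives $0=mg_X(A)\ge h(A)$, so $h(A)\le 0$. Writing $A=av$ with $v$ primitive isotropic and using that $h_c(A)=1-\frac{|a|\,|c\cdot v|}{2}$ when $A\cdot A=0$, and that $c\cdot v$ is even (as $c$ is characteristic and $v\cdot v=0$), one sees that $h(A)\le 0$ holds (for one, hence every, nonzero multiple) if and only if no adjunction class of $\Lambda$ is orthogonal to $v$. So I first reduce to the primitive case $A=v$ and restate the goal: if no $c\in\mathcal C_\Lambda$ satisfies $c\cdot v=0$, then $v$ is $Aut(\Gamma)^T$-equivalent to $F$ or to $H-E_1$. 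By Lemma~\ref{reduction} the adjunction classes depend only on the reduced data $(\Lambda^2,\Gamma,T)$, and I split into cases according to the parity of $\Gamma$ and the triviality of $T$, as in Section~2.3.

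Two of the cases are handled by exhibiting an adjunction class orthogonal to $v$. First suppose $T$ is nontrivial, so $\tilde b_1\ge 2$ and $\mathrm{Im}\,T=\langle F\rangle$ with $F$ primitive isotropic. The light cone lemma gives $v\cdot F\ne 0$ whenever $v\not\parallel F$. Diagonalizing the negative-definite quotient $v^\perp/\langle v\rangle$ (of rank $b^-(\Lambda)-1$) and lifting produces a characteristic class $c$ with $c\cdot v=0$ and $c\cdot c$ equal to its maximal value, which is $\ge \sigma$ when the quotient is odd and $0$ when it is even; since $\tilde b_1\ge 2$ and $2\tilde\chi+3\sigma=\sigma+8-4\tilde b_1<0$, in both cases $c\cdot c\ge 2\tilde\chi+3\sigma$, so $c$ meets the type~II norm condition. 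Replacing $c$ by $c+2v$ (which preserves characteristicness, $c\cdot v=0$, and $c\cdot c$) arranges $c\cdot F=2(v\cdot F)\ne 0$, so $c$ is a type~II adjunction class orthogonal to $v$, contradicting $h(v)\le 0$. Hence $v\parallel F$ and $A=aF$. Second, if $T$ is trivial and $\Gamma$ is even, then $2\tilde\chi+3\sigma<0$ forces $\sigma<0$, so the zero class is a type~I adjunction class orthogonal to every $v$; thus no square-zero class is sphere-representable, matching the absence of even forms from Proposition~\ref{sphere}.

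The remaining and hardest case is $T$ trivial with $\Gamma$ odd, where $\mathcal C_\Lambda$ consists only of type~I classes and $n:=b^-\ge 10$. Here I first remove the characteristic isotropic vectors: if $v$ is characteristic then $v$ itself is a type~I adjunction class (characteristic with $v\cdot v=0>\sigma$) orthogonal to $v$, so $h(av)\ge h_v(av)=1$ and $A$ is not sphere-representable. Thus $v$ is non-characteristic, and I analyze its reduced lattice $v^\perp/\langle v\rangle$, which is negative definite unimodular of rank $n-1$. If it is even, its characteristic vectors are twice the lattice and include one of norm $-8$, which lifts to a characteristic class $c\in\Lambda^2$ with $c\cdot v=0$ and $c\cdot c=-8>\sigma$ (as $n\ge 10$) — again a type~I class orthogonal to $v$, excluding such $v$. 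So I may assume $v^\perp/\langle v\rangle$ is odd, hence isomorphic to $-\mathbb{Z}^{\,n-1}$, exactly as for $H-E_1$.

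What remains is to show that a non-characteristic primitive isotropic vector $v$ whose reduced lattice is diagonal is $Aut(\Gamma)$-equivalent to $H-E_1$; this is the step I expect to be the main obstacle. The equivalence of isotropic vectors of fixed norm, divisor, and type in an odd unimodular form is controlled by Wall~\cite{Wall-uni-ii}, but in the nearly-definite range $b^+=1$ those results guarantee only finiteness of the number of orbits, not uniqueness. I would attempt to prove single-orbit-ness directly for $\langle 1\rangle\oplus n\langle -1\rangle$ once the reduced lattice is pinned to $-\mathbb{Z}^{\,n-1}$, and, failing that, exclude any residual orbit by Donaldson's diagonalization theorem in the form used by Kikuchi~\cite{Ki} for positive square (Theorem~\ref{T trivial}), adapting his argument from positive-square to square-zero spheres. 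A related technical point underlying the constructions above — that the extremal characteristic vector of $v^\perp/\langle v\rangle$ lifts to a characteristic class of $\Lambda^2$ orthogonal to $v$ — also needs verification via the exact sequence $0\to\langle v\rangle\to v^\perp\to v^\perp/\langle v\rangle\to 0$, but this is routine compared with the orbit classification.
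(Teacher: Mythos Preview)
First, note that the paper does \emph{not} prove this statement: it is explicitly labeled a conjecture, and the paper only verifies three special subcases (Section~4.2, Cases~1--3), leaving open precisely the two families you are aiming at --- $T$ trivial with $\Gamma=\langle 1\rangle\oplus n\langle -1\rangle$, $n\ge10$, and $\tilde b_1\ge2$ with $\Gamma$ odd of rank $\ge4$. So there is no full ``paper's own proof'' to compare against; your outline is more ambitious than what the paper establishes. Your reduction to ``no adjunction class is orthogonal to the primitive isotropic $v$'' is correct and cleaner than the paper's case-by-case computations, and your $T$-nontrivial and $T$-trivial-even arguments are essentially sound (the $c\mapsto c+2v$ trick to force $c\cdot F\ne0$ is exactly right).

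There is, however, a genuine error in your $T$-trivial-odd case: the inference ``$v^\perp/\langle v\rangle$ is odd, hence isomorphic to $-\Z^{\,n-1}$'' is false for $n\ge10$, since odd negative-definite unimodular lattices of rank $\ge9$ are not unique (e.g.\ $-E_8\oplus\langle-1\rangle$). What actually forces $M:=v^\perp/\langle v\rangle$ to be diagonal is Elkies' theorem: the hypothesis that no type~I adjunction class lies in $v^\perp$ says exactly that every characteristic vector of $M$ has norm $\le\sigma=-(n-1)$, and Elkies then gives $M\cong-\Z^{\,n-1}$. (Your separate treatment of $M$ even is also off --- the relevant characteristic is $0$, not one of norm $-8$ --- but the case is in fact vacuous: when $\Gamma$ is odd and $v$ is ordinary, any characteristic of $\Lambda^2$ translates into $v^\perp$ and its image in $M$ is characteristic and nonzero modulo $2M$, forcing $M$ odd.) On the other hand, what you flag as the ``main obstacle'' --- that $M\cong-\Z^{\,n-1}$ forces $v\sim H-E_1$ --- is in fact routine and does not require Wall's finiteness or Kikuchi's methods: because $M$ is odd one can choose $w$ with $v\cdot w=1$ and $w\cdot w=0$, whence $\{v,w\}^\perp\cong M$ gives a splitting $\Lambda^2\cong U\oplus(n-1)\langle-1\rangle$ with $v$ a hyperbolic generator, and the standard isomorphism $U\oplus\langle-1\rangle\cong\langle1\rangle\oplus2\langle-1\rangle$ carries $v$ to an $H'-E_i'$. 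With the Elkies input in place of your faulty step, your outline would in fact settle the conjecture --- something the paper does not do.
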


\begin{remark}
For rational or ruled 4-manifolds, all the classes in 
Conjecture \ref{square zero} and Theorem \ref{T trivial}
are indeed represented by embedded spheres. 
\end{remark}

So far we can verify Conjecture~\ref{square zero} for the following three cases. 

\vspace{3mm}
\noindent\underline {Case 1}. $\tilde b_1\geq 4$, $\Gamma=U$ as {\bf Case (3)} in Section \ref{nonneg inv}. The only difference is $2\tilde \chi+3\sigma=8-4\tilde b_1<0$. $A=aF+bB$ is reduced with non-negative square if $a\geq 0$, $b\geq 0$.  Again $c_0=2B$ is a type II adjunction class. The only possible spherical classes are $aF$ according to formula \eqref{ha-3}.

\noindent \underline{Case 2}. $\tilde b_1\geq 4$, $\Gamma=V$ as {\bf Case (5)} in Section \ref{nonneg inv}.  $A=aF+bB$ is reduced with non-negative square if $a\geq 0$, $b\geq 0$, or $a>0$, $b\leq -2a$.  $c_0=2B-F$ is a type II adjunction class. The only possible spherical classes are $aF$ according to formula \eqref{ha-5}.

\noindent \underline{Case 3}. $\tilde b_1\geq 2$, $\Gamma=U\oplus \langle -1 \rangle$. 
The verification in this  case is elementary but a bit long, so we omit the details.

The remaining cases for Conjecture~\ref{square zero} are:
$T$ trivial, $\Gamma=\langle 1\rangle \oplus n \langle -1\rangle$ with $n\geq 10$, and $\tilde b_1\geq 2$, $\Gamma=U\oplus (n-1)\langle -1\rangle$ with $n\geq 3$.

 \appendix
 \section{Proofs of Lemmas \ref{case4} and \ref{leq0case4}}
 
 \begin{proof} [Proof of Lemma  \ref {case4}]
 Recall that  $A=aH-\sum_{i=1}^n b_i E_i$ is reduced means that  $b_1\geq b_2\geq \cdots \geq b_n\geq 0$, and  $a\geq b_1+b_2+b_3$.  We also have $A\cdot A=a^2 -\sum_{i=1}^n b_i^2 \geq 0$.

Let $ c=kH-\sum_{i=1}^ne_iE_i$ be a characteristic class. Then 
$k$ and $e_i$ are odd. 
We may assume $k>0$.
Since $T$ is trivial, there are only  type I adjunction classes. Such a class $c$ satisfies
$$c\cdot c=k^2-\sum e_i^2\geq \sigma+8=9-n\geq 0. $$
Since $e_i\ne 0$ for any $i$, we must have $k\geq 3$. 
Let $c_0=3H-\sum_{i=1}^nE_i$. Then 
$$c_0\cdot A=3a-\sum b_i\geq 3(b_1+b_2+b_3)-\sum b_i\geq 0.$$
So $|c_0\cdot A|=c_0\cdot A$ and we obtain the formula for $h_{c_0}$ as in 
\eqref{negoddgenus}. 

We will  show $h(A)=h_{c_0}(A)$ by verifying that $c\cdot A\geq c_0\cdot A$ among all type I adjunction classes with positive $k$ coefficient. That is, we will prove that
\be
ka-\sum_{i=1}^n e_i b_i \geq 3a -\sum_{i=1}^n b_i ,
\label{ke1}
\ee
where $(k; e_1,\cdots, e_n)$ and $(a; b_1,\cdots, b_n)$ satisfy the said conditions. 
Observe that 
$$ ka-\sum_{i=1}^n e_i b_i \geq ka -\sum_{i=1}^n |e_{\tau(i)}| \, b_i$$
by a rearrangement, where $\tau$ is a permutation of $\{ 1,2,\cdots, n\}$ such that $|e_{\tau(1)}|\geq |e_{\tau(2)}|\geq \cdots \geq |e_{\tau(n)}| \geq 1$. So it suffices to prove \eqref{ke1} under an extra condition 
$$ e_1 \geq e_2 \geq \cdots \geq e_n \geq 1.$$

Since $a\geq 1$ for nonzero reduced classes, let $x_i=b_i/a$ for all $i$.  Then 
$$ (k-3)a -\sum_{i=1}^n (e_i -1) b_i =a[k-3 -\sum_{i=1}^n (e_i-1)x_i] .$$
If  $n<9$, we may extend to $n=9$ case by setting $e_{n+1}=\cdots =e_9 =1$ and $x_{n+1} =\cdots =x_9=0$. So we need to prove that 
\be
k-3 -\sum_{i=1}^9 (e_i -1) x_i \geq 0
\label{ke2}
\ee
under conditions
\be
\begin{cases}
k>e_1\geq e_2 \geq \cdots \geq e_9 \geq 1, & \text{all odd,} \\
k^2 -\sum_{i=1}^9 e_i^2 \geq 0, & 
\end{cases}
\label{ke3}
\ee
and 
\be
\begin{cases}
\sum_{i=1}^9 x_i^2 \leq 1, & \\
x_1 \geq x_2 \geq \cdots \geq x_9 \geq 0, & \\
x_1 +x_2 +x_3 \leq 1. & 
\end{cases}
\label{ke4}
\ee

Notice that the second and third inequalities in \eqref{ke4} imply that
\begin{align*} 
1 &\geq (x_1+x_2 +x_3)^2 
 \geq \sum_{i=1}^9 x_i^2. 
\end{align*}
So the first inequality in \eqref{ke4} is redundant and will be ignored. 

If $e_1=e_2=\cdots =e_9=1$, then $k\geq 3$, and inequality \eqref{ke1} or \eqref{ke2} holds obviously.
In any other cases, $e_1\geq 3$. 
For fixed $(k; e_1,\cdots, e_9)$ satisfying \eqref{ke3} and $e_1\geq 3$, we will try to show that 
$$ \sum_{i=1}^9 (e_i -1) x_i\leq k-3 $$
for any $x_1, x_2, \cdots, x_9$ satisfying constraints \eqref{ke4}. It is clear that
$$\sum_{i=1}^9 (e_i -1) x_i  \leq (e_1 -1)x_1 +(e_2-1)x_2 +\left( 
\sum_{i=3}^9 (e_i -1)\right)x_3=:P(x_1,x_2,x_3).$$
Hence it is enough to find the maximum value of $P(x_1,x_2,x_3)$ when $x_1+x_2+x_3\leq 1, x_1\geq x_2\geq x_3\geq 0$, and show that it is not greater than $k-3$. 
Since $1-x_2-x_3\geq x_1$, we have
$P(x_1,x_2,x_3)\leq P(1-x_2-x_3,x_2,x_3)$ and can assume $x_1+x_2+x_3=1$.

If $\sum_{i=3}^9 (e_i -1)\leq e_1 -1$, by the condition $k\geq e_1 +2$,
$$P(x_1,x_2,x_3)\leq (e_1-1)x_1+(e_1-1)x_2+(e_1-1)x_3=e_1-1\leq k-3.$$

Assume $\sum_{i=3}^9 (e_i -1) >e_1 -1\geq e_2 -1$ now. Note that 
$$P(x_1,x_2,x_3)\leq P\left(x_1,\frac{x_2+x_3}{2},\frac{x_2+x_3}{2}\right).$$ 
So we can assume further that $x_2=x_3$. Then $x_2=\frac{1-x_1}{2}$ and
\begin{align*} P(x_1,x_2,x_2)  &=(e_1 -1)x_1 +\left(\sum_{i=2}^9 (e_i -1)\right)\frac{1-x_1}{2}\notag \\
&=\frac12  \sum_{i=2}^9 (e_i -1) +\frac{x_1}{2} \left[2(e_1 -1) - \sum_{i=2}^9 (e_i -1)\right]. 
\end{align*}
If $2(e_1 -1) \geq \sum_{i=2}^9 (e_i -1)$, $P(x_1,x_2,x_2)$ attains maximum value at $x_1=1$ and 
$P(1,0,0)=(e_1-1)\leq k-3$.

If $2(e_1 -1) < \sum_{i=2}^9 (e_i -1)$, $P(x_1,x_2,x_2)$ attains maximum value at $x_1=x_2=\frac13$, and  we have 
$$P\left(\frac13,\frac13,\frac13\right)=\frac13  \sum_{i=1}^9 (e_i -1)=\left(\frac13  \sum_{i=1}^9e_i\right)-3\leq k-3$$ 
where the last inequality follows $k^2 \geq \sum_{i=1}^9 e_i^2$ and Cauchy-Schwarz inequality. 

This completes the proof of inequality \eqref{ke1}, hence the Lemma. 
\end{proof}

\begin{proof}[Proof of Lemma \ref{leq0case4}]

By Lemma \ref{case4}
we need to solve 
$$ 2h_{}(A) =a^2-3a+2-\sum_{i=1}^{n} b_i(b_i -1) \leq 0 $$
under constraints:
\begin{enumerate}

\item[(1)] $A^2=a^2 -\sum_{i=1}^n b_i^2 \geq 0$;
 
\item[(2)] $b_1\geq b_2 \geq \cdots \geq b_n \geq 0$;

\item[(3)] $a\geq b_1 +b_2 +b_3$;

\item[(4)] $1\leq n\leq 9$.

\end{enumerate}
As noted in the proof of Lemma \ref{case4}, the first inequality is redundant and will be ignored. 
We follow the method in Section 5 of \cite{Li}. Let $a=b_1 +t$. Constraints (2) and (3) imply that $t\geq b_2 +b_3 \geq 2b_3$. 
If $t=0$, then $b_2=b_3=\cdots =b_n=0$, and $a=b_1\geq 1$. So $A=a(H-E_1)$, and $2h_{}(A) =(a-1)(a-2) -a(a-1) =-2(a-1)\leq 0$. 

If $t=1$, then $b_2+b_3 \leq 1$. So either 
$b_2=1, b_3=\cdots =b_n=0$, or $b_2=b_3=\cdots =b_n=0$. In either case, we have $h_{}(A)=0$. 

Now assume $t\geq 2$. 
We want to show that min$h(A)=0$ in this case.
Substitute $b_1=a-t$, and use constraints (2) and (4) to get
\begin{align*}
2h(A)&= a^2-3a+2-(a-t)^2 +(a-t) -\sum_{i=2}^{n} b_i(b_i -1) \\
&= 2(t-1)a -t^2-t+2 -b_2^2 +b_2 -\sum_{i=3}^{n} (b_i^2 -b_i) \\ 
&\geq 2(t-1) (b_2 +t) -t^2-t+2 -b_2^2 +b_2 -  7 (b_3^2 -b_3) \\
&:= q(b_2) ,
\end{align*} 
where in the definition of $q(b_2)$, we have fixed $t$ and $b_3$.  So $b_2\in [b_3, t-b_3]$. It is clear that $q$ is a quadratic function in $b_2$, so it takes minimum value only at end points of the interval $[b_3, t-b_3]$. But
$$ q(t-b_3) -q(b_3) =(t-1)(t-2b_3) \geq 0.$$ 
It follows that
$$ 2h(A) \geq q(b_3) =2(t-1) (b_3+t)-t^2-t+2 -8(b_3^2 -b_3) :=l_t(b_3),$$ 
where $0\leq b_3 \leq \frac{t}{2}$. For fixed $t$, $l_t$ is a quadratic function in $b_3$, and takes minimum value only at end points of the interval $[0, \frac{t}{2}]$. 
Since $t\geq 2$,
$$ l_t(0) =2(t-1)t -t^2 -t+2 =t^2-3t+2 =(t-1)(t-2) \geq 0$$
and 
$$
l_t(\frac{t}{2}) = 2(t-1) \cdot \frac32 t-t^2-t+2 -8\cdot \frac{t}{2} \cdot \frac{t-2}{2} =2>0.
$$ It follows that
$2h(A) \geq l_t(b_3) \geq 0$, and equality holds only if $t=2$ and $b_2=b_3=0$. So the only possible solutions for $h(A)\leq 0$ in this case are $A=(b+2) H -bE_1$, $b\geq 0$. Compute 
$$ 2h(A)=(b+1)b-b(b-1) =2b\leq 0  $$
to see that the only solution is $b=0$, i.e. $A=2H$. 

Combining the above discussion, we see that all possible solutions to $h(A)\leq 0$ for case (4) are listed in the statement. 
\end{proof}


\begin{thebibliography}{999}

\bibitem{FS} R. Fintushel, R. Stern, \textit{Immersed spheres in 4-manifolds and the immersed Thom conjecture}, Turkish J. Math., \textbf{19} (1995), 145--157.
\bibitem{FM} R. Friedman, J. Morgan, \textit{On the diffeomorphism types of certain algebraic surfaces} I, J. Diff. Geom. \textbf{27} (1988), 297--369.
\bibitem{FK}  M. Furuta, Y. Kametani, N. Minami,  \textit{Stable-homotopy Seiberg-Witten invariants for rational cohomology $K3 \sharp K3$'s}, J. Math. Sci. Univ. Tokyo, \textbf{8} (2001), no. 1, 157--176. 

\bibitem{Gao} H. Gao, \textit{Representing homology classes of almost definite 4-manifolds}, Topology and Its Appl., \textbf{52} (1993), 109--120.
\bibitem{HS} W. C. Hsiang, R. H. Szczarba, \textit{On embedding surfaces in four-manifolds}, Proc. Symp.
Pure Math., \textbf{22} (1971), 97--103.
\bibitem {Ki} K. Kikuchi, \textit{Positive 2-spheres in 4-manifolds of signature $(1, n)$}, Pacific J. Math. \textbf{160}  (1993), 245--258.
\bibitem{KM} P. B. Kronheimer, T. S. Mrowka, \textit{The genus of embedded surfaces in the projective plane}, Math. Res. Letters, \textbf{1} (1994), 797--808.
\bibitem{Lawson-sphere} T. Lawson, \textit{Smooth embeddings of 2-spheres in 4-manifolds}, Expos. Math., \textbf{10} (1992), 289--309.
\bibitem{Lawson} T. Lawson, \textit{The minimal genus problem}, Expos. Math., \textbf{15} (1997), 385--431.
\bibitem{LiLee} R. Lee, T.-J. Li, \textit{Intersection forms of non-spin four manifolds}, Math. Ann., \textbf{319} (2001), 311--318.
\bibitem{Li} B.-H. Li, \textit{Representing nonnegative homology classes of $\C\PP^2\# n\overline{\C\PP}^2$ by minimal genus smooth embeddings}. Trans. Amer. Math. Soc. \textbf{352} (2000), no. 9, 4155--4169. 
\bibitem{LL1} B.-H. Li, T.-J. Li, \textit{Minimal genus embeddings in $S^2$-bundles over surfaces}, Math. Research Letters, \textbf{4} (1997), 379--394.
\bibitem{LL} B.-H. Li, T.-J. Li, \textit{Minimal genus smooth embeddings in $S^2\times S^2$ and $\C\PP^2\sharp n\overline{\C\PP}^2$ with $n\leq 8$}, Topology \textbf{37} (1998), no. 3, 575--594.
\bibitem {LiLiu1} T.-J. Li, A. Liu, \textit{General wall crossing formula}, Math. Res. Lett. \textbf{2} (1995), no. 6, 797--810.
\bibitem {LiLiu2} T.-J. Li, A. Liu, \textit{Symplectic structure on ruled surfaces and a generalized adjunction formula}, Math. Research Letters \textbf{2} (1995), no. 4, 453--471.
\bibitem {LiLiu3} T.-J. Li, A. Liu, \textit{The equivalence between SW and Gr in the case where $b^+=1$}, IMRN (1999), no. 7, 335--345.
\bibitem {LiLiu4} T.-J. Li, A. Liu, \textit{Uniqueness of symplectic canonical classes, surface cone and symplectic cone of 4-manifolds with $b^+=1$}, J. Diff. Geom., \textbf{58} (2001), 331--370.
\bibitem{Lonne} M. L\"onne, \textit{On the diffeomorphism groups of elliptic surfaces}, Math. Ann. 310 (1998), no. 1, 103--117.
\bibitem{Mor} J. Morgan, \textit{The Seiberg-Witten equations and applications to the topology of smooth four-manifolds}, Mathematical Notes, 44, Princeton University Press, 1996.
\bibitem{MS} J. Morgan, Z. Szab\'o, \textit{Homotopy $K3$ surfaces and mod 2 Seiberg-Witten invariants}, Math. Res. Lett. \textbf{4} (1997), no. 1, 17--21.
\bibitem{MST96}  J. Morgan, Z. Szabo, C. Taubes,  \textit{A  product formula for Seiberg-Witten invariants and the generalized Thom conjecture}, J. Diff. Geom., \textbf{44} (1996), 706--788.
\bibitem{OT}  Ch. Okonek, A. Teleman, \textit{Seiberg-Witten invariants for manifolds with $b_+=1$ and the universal wall crossing formula}, Int. J. Math., \textbf{7} (1996), no. 6, 811--832. 

\bibitem{OS} P. Oszv\'ath, Z. Szab\'o, \textit{The symplectic Thom conjecture}, Ann. Math., \textbf{151} (2000), 93--124.
\bibitem{R} V. A. Rokhlin, \textit{Two-dimensional submanifolds of four-dimensional manifolds}, Func. Analysis Appl., \textbf{6} (1971), 39--48.
\bibitem{Rub} D. Ruberman, \textit{The minimal genus of an embedded surface of non-negative square in a rational surface}, Turkish J. Math. \textbf{20} (1996), no. 1, 129--133. 
\bibitem{S} S. Strle, \textit{Bounds on genus and geometric intersections from cylindrical end moduli spaces},  J. Differential Geom. \textbf{65}  (2003), no. 3, 469--511.

\bibitem{Wall-uni} C. T. C. Wall, \textit{On the orthogonal groups of unimodular quadratic forms},
Math. Ann., \textbf{147} (1962), 328--338.
\bibitem{Wall-uni-ii} C. T. C. Wall, \textit{On the orthogonal groups of unimodular quadratic forms. II}, J. Reine Angew. Math., \textbf{213} (1963/1964), 122--136.
\bibitem{Wall} C. T. C. Wall, \textit{Diffeomorphisms of 4-manifolds}, J. London Math. Soc., \textbf{39} (1964), 131--140.
\bibitem{W}  E. Witten, \textit{Monopoles and 4-manifolds}. Math. Res. Letters. \textbf{1} (1994), 760--796. 
\bibitem{GZ}  X. Zhao, H. Gao, H. Qiu,  \textit{The minimal genus problem in rational surfaces $CP^2 \sharp n \overline{CP^2}$}, Sci. China Ser. A: Mathematics, \textbf{49} (2006), no. 9, 1275--1283. 
\end{thebibliography}
\end{document}